\pdfoutput = 1

\documentclass[onefignum,onetabnum]{siamart220329}


\usepackage{lipsum}
\usepackage{amsfonts}
\usepackage{graphicx}
\usepackage{epstopdf}
\usepackage{algorithmic}
\ifpdf
  \DeclareGraphicsExtensions{.eps,.pdf,.png,.jpg}
\else
  \DeclareGraphicsExtensions{.eps}
\fi


\newsiamremark{remark}{Remark}
\newsiamremark{hypothesis}{Hypothesis}
\crefname{hypothesis}{Hypothesis}{Hypotheses}
\newsiamthm{claim}{Claim}

\usepackage{subcaption}




\usepackage[shortlabels]{enumitem}
\usepackage{array}

\usepackage[symbol]{footmisc} 

\def\M{\mathcal{M}}

\def\D{\mathrm{D}}

\def\bzero{{\mathbf 0}}

\def\be{{\mathbf e}}

\def\bp{{\mathbf p}}

\def\bu{\mathbf u}
\def\bv{\mathbf v}
\def\bw{{\mathbf w}}
\def\bx{{\mathbf x}}
\def\by{{\mathbf y}}
\def\bz{{\mathbf z}}

\def\bA{\mathbf A}
\def\bB{\mathbf B}

\def\bG{\mathbf G}
\def\bH{\mathbf H}
\def\bI{{\mathbf I}}
\def\bJ{{\mathbf J}}

\def\bM{\mathbf M}

\def\bU{{\mathbf U}}
\def\bV{{\mathbf V}}
\def\bW{{\mathbf W}}
\def\bX{{\mathbf X}}
\def\bY{{\mathbf Y}}

\def\sR{{\mathbb R}}
\def\sS{{\mathbb S}}
\def\sE{{\mathbb E}}

\def\gD{{\mathcal{D}}}
\def\gM{{\mathcal{M}}}

\def\gH{{\mathcal{H}}}

\def\gS{{\mathcal{S}}}
\def\gU{{\mathcal{U}}}

\def\id{{\mathrm{id}}}

\def\grad{{\mathrm{grad}}}
\def\hess{{\mathrm{Hess}}}

\newcommand{\trace}{\mathrm{tr}}

\newtheorem{assumption}{Assumption}

\def\bGamma{{\mathbf \Gamma}}

\DeclareMathOperator*{\argmin}{arg\,min}

\def\bnabla{{\boldsymbol \nabla}}

\makeatletter
\newcommand{\thickhline}{%
    \noalign {\ifnum 0=`}\fi \hrule height 1pt
    \futurelet \reserved@a \@xhline
}
\newcolumntype{"}{@{\hskip\tabcolsep\vrule width 1pt\hskip\tabcolsep}}
\makeatother

\headers{Riemannian Hamiltonian methods}{A. Han, B. Mishra, P. Jawanpuria, P. Kumar, J. Gao}


\title{{Riemannian Hamiltonian methods \\ for min-max optimization on manifolds}}


\author{Andi Han\thanks{University of Sydney 
  (\email{andi.han@sydney.edu.au}, \email{junbin.gao@sydney.edu.au}).}
\and Bamdev Mishra\thanks{Microsoft India
  (\email{bamdevm@microsoft.com}, \email{pratik.jawanpuria@microsoft.com}).}
 \and Pratik Jawanpuria\footnotemark[2] , \\
  Pawan Kumar\thanks{IIIT Hyderabad India (\email{pawan.kumar@iiit.ac.in}).}
\and Junbin Gao\footnotemark[1]}

\usepackage{amsopn}




\begin{document}

\maketitle
    
\begin{abstract}
In this paper, we study min-max optimization problems on Riemannian manifolds. We introduce a Riemannian Hamiltonian function, minimization of which serves as a proxy for solving the original min-max problems.  Under the Riemannian Polyak--\L{}ojasiewicz  condition on the Hamiltonian function, its minimizer corresponds to the desired min-max saddle point.  We also provide cases where this condition is satisfied. For geodesic-bilinear optimization in particular, solving the proxy problem leads to the correct search direction towards global optimality, which becomes challenging with the min-max formulation. To minimize the Hamiltonian function, we propose Riemannian Hamiltonian methods (RHM) and present their convergence analyses. We extend RHM to include consensus regularization and to the stochastic setting. We illustrate the efficacy of the proposed RHM in applications such as subspace robust Wasserstein distance, robust training of neural networks, and generative adversarial networks. 
\end{abstract}

\begin{keywords}
  Riemannian optimization, saddle point, consensus optimization, Hamiltonian gradient descent, Polyak--\L{}ojasiewicz, geodesic-bilinear, geodesic convex concave.
\end{keywords}

\begin{AMS}
  65K05, 90C30, 90C22, 90C25, 90C26, 90C27, 90C46, 58C05, 49M15
\end{AMS}

\section{Introduction}


In this paper, we consider the Riemannian manifold constrained min-max problem
\begin{equation}
    \min_{x \in \M_x} \max_{y \in \M_y} f(x, y), \label{main_riem_minmax}
\end{equation}
where $\M_x, \M_y$ are complete Riemannian manifolds and $f : \M_x \times \M_y \xrightarrow{} \sR$ is a jointly smooth real-valued function. The aim is to find a global saddle point $(x^*, y^*)$ that satisfies for all $(x,y) \in \M_x\times \M_y$,
\begin{equation}
    f(x^*, y) \leq f(x^*, y^*) \leq f(x, y^*). \label{global_saddle_pt}
\end{equation}
Examples of Riemannian manifolds of interest include the sphere manifold, the Stiefel manifold, the manifold of orthogonal matrices, the manifold of doubly stochastic matrices, and the symmetric positive definite manifold, to name a few \cite{absil2009optimization,boumal2020introduction,shi2021coupling,bhatia2009positive}.

When both $\M_x, \M_y$ are the Euclidean space, problem (\ref{main_riem_minmax}) reduces to the classical min-max problem, which has been widely studied for applications including adversarial training \cite{madry2017towards}, robust learning \cite{el1997robust}, non-linear feature learning~\cite{rakotomamonjy08a,aflalo11a,jawanpuria11a,jawanpuria15b}, generative adversarial networks \cite{goodfellow2014generative,arjovsky2017wasserstein,schafer2019competitive}, constrained optimization \cite{bertsekas2014constrained}, multi-task learning~\cite{jawanpuria12a,jawanpuria15a}, and fair statistical inference \cite{madras2018learning}, among others. When $f$ is convex in $x$ and concave in $y$ (convex-concave), the existence of a global saddle point is guaranteed by the well-established minimax theorem \cite{neumann1928theorie,sion1958general}. Algorithms converging to such saddle points include the optimistic gradient descent ascent (OGDA) algorithm \cite{popov1980modification} and the extra-gradient algorithm (EG) \cite{extragradient1976}, which have been analyzed in \cite{nemirovski2004prox,monteiro2010complexity,monteiro2011complexity,mokhtari2020convergence}. For the general nonconvex-nonconcave setting, however, the saddle point, be it {local} or global, may not exist \cite{jin2020local}, and it remains challenging to establish convergence for both OGDA and EG.

On Riemannian manifolds, there exist cases where many nonconvex (or nonconcave) functions turn out to be geodesic convex (or concave), a generalized notion of convexity on Riemannian manifolds \cite{sra2015conic}. This ensures the existence of a global saddle point on manifolds under the generalized min-max theorem \cite{stacho1980minimax,zhang2022minimax}. 
Furthermore, there is a growing interest in the Riemannian min-max problem \eqref{main_riem_minmax} with applications such as low-rank tensor learning~\cite{jawanpuria18a,jawanpuria18b}, orthonormal generative adversarial networks \cite{muller2019orthogonal,brock2018large}, subspace robust Wasserstein distances \cite{paty19,lin2020projection,huang2021riemannian,jawanpuria21a}, and adversarial neural network training \cite{huang2020gradient}.
It is, therefore, motivating to study the min-max problem on manifolds.

Nevertheless, existing works that systematically study the Riemannian min-max problem are sparse. In \cite{huang2020gradient}, a Riemannian gradient descent ascent (RGDA) method has been proposed, yet the analysis is restricted to $\M_y$ being a convex subset of the Euclidean space and $f(x,y)$ being strongly concave in $y$. A recent paper \cite{zhang2022minimax} has formally characterized the optimality conditions of the Riemannian min-max problem for geodesic convex geodesic concave functions. A Riemannian corrected extra-gradient (RCEG) algorithm has been proposed and analyzed. 
A follow-up work \cite{jordan2022first} completes the analysis of RGDA and RCEG under geodesic (strongly) convex (strongly) concave settings.

\subsection*{Contributions}
In this paper, we propose a class of methods for solving the min-max problem \eqref{main_riem_minmax} on Riemannian manifolds, which we call Riemannian Hamiltonian methods (RHM). The idea is to minimize the squared norm of the Riemannian gradient of \eqref{main_riem_minmax}, known as the Riemannian Hamiltonian. Minimizing the Hamiltonian function serves as a good proxy for solving problem (\ref{main_riem_minmax}). Under the Riemannian Polyak--\L{}ojasiewicz (PL) condition  \cite{zhang2016riemannian} on the Hamiltonian function, its minimizer recovers the desired saddle point. {A key motivation to consider the proxy problem instead of the original min-max problem is for geodesic-bilinear problems, where solving the proxy problem leads to the correct direction towards global optimality while existing methods either cycle or converge extremely slowly (discussed in Section \ref{motivating_example_sect}).}
In addition, the Hamiltonian gradient methods have been considered for solving min-max problems in the Euclidean space, which show great promise in accelerating and stabilizing the convergence to saddle points \cite{abernethy2019last,balduzzi2018mechanics,mescheder2017numerics,loizou2020stochastic}. 
This paper generalizes many of those analysis to Riemannian manifolds. 

It should be emphasized that the proposed {generalization} to manifolds is nontrivial as the analysis for the Euclidean counterparts, such as in \cite{abernethy2019last}, rely heavily on the matrix properties of the Jacobian. Generalization to Riemannian manifolds require adherence to Riemannian operations independent of the matrix structure. Another challenge is to deal with the varying inner product (Riemannian metric) structure on manifolds. We handle the above by devising novel proof strategies and proposing a metric-aware Riemannian Hamiltonian function that respects the manifold geometry. 


In particular, we show global linear convergence of any Riemannian solver to saddle points of problem \eqref{main_riem_minmax} as long as the Riemannian Hamiltonian of $f$ satisfies the Riemannian PL condition \cite{zhang2016riemannian}. We show this occurs when $f$ is geodesic strongly convex geodesic strongly concave, and also for some nonconvex functions with sufficient geodesic linearity. We additionally extend the proposed RHM to incorporate a consensus regularization and to the stochastic setting, and prove their convergence. Existing Riemannian algorithms for solving (\ref{main_riem_minmax}) such as \cite{zhang2022minimax} make use of the exponential map to update the iterates on the manifolds. In this work, we discuss convergence results with exponential as well as general retraction maps on manifolds. 

We empirically show the convergence of our proposed RHM algorithms for different min-max functions and compare them with existing baselines. We further demonstrate the usefulness of RHM algorithms in various applications such as learning subspace robust Wasserstein distance, robust training of neural networks and training of generative adversarial networks.


\subsection*{Organizations} 
The rest of the paper is organized as follows. Section \ref{preliminary_sect} reviews the preliminary knowledge on Riemannian geometry and Riemannian optimization as well as introduces various functions classes on Riemannian manifolds. We also briefly discuss the existing literature on mix-max optimization in the Euclidean space and on Riemannian manifolds. In Section \ref{sect_hamiltonian}, we propose the Riemannian Hamiltonian function and RHM algorithms, as well as analyze their convergence under the Riemannian PL condition. We provide three cases when such condition is satisfied. Section \ref{RHM_con_sect} introduces and analyzes the Riemannian Hamiltonian consensus method. Sections \ref{stochastic_hm_sect} and \ref{retraction_sect} extend the proposed methods to stochastic settings and to the case of retraction. In Section \ref{application_sect}, we empirically compare our algorithms with different baselines on various applications. 
Section~\ref{sec:conclusion} concludes the paper.

\section{Preliminaries}
\label{preliminary_sect}

In this section, we give a brief overview of Riemannian geometry and relevant ingredients required for Riemannian optimization. For a more complete treatment of the topic, see \cite{absil2009optimization,boumal2020introduction}. We also briefly discuss some of the existing works on min-max optimization.

\subsection{Riemannian geometry and optimization}

\subsubsection*{Basic Riemannian geometry}
Riemannian manifold $\M$ is a manifold with a Riemannian metric, which is a smooth, symmetric positive definite function $g: T_p\M \times T_p\M \xrightarrow{} \sR$ on every tangent space $T_p\M$, with $p \in \M$. It is usually written as an inner product $\langle \cdot, \cdot \rangle_p$. The metric structure induces a norm for any tangent vector $\xi \in T_p\M$, which is $\| \xi \|_p := \sqrt{\langle \xi, \xi\rangle_p}$. For a linear operator on the tangent space $H: T_p\M \xrightarrow{} T_p\M$, its operator norm is defined as $\| H \|_p := \max_{\xi \in T_x\M : \| \xi\|_p = 1} \| H[\xi]\|_p$.

A geodesic on the manifold $\gamma: [0,1] \xrightarrow{} \M$ is the locally shortest curve with zero acceleration. The exponential map at $p$, ${\rm Exp}_p: T_p\M \xrightarrow{} \M$ is defined as the end point of a geodesic along the initial velocity. That is, ${\rm Exp}_p(\xi) = \gamma(1)$ where $\gamma'(0) = \xi$, $\gamma(0) = p$ for any $\xi \in T_p\M$. Riemannian distance is computed as $d(p, q) = \int_0^1 \|\gamma'(t) \|_{\gamma(t)} dt$ where $\gamma(t)$ is the distance  minimizing geodesic connecting $p,q \in \M$. 
In a totally normal neighbourhood $\Omega$ where there exists a unique geodesic between any $p, q \in \Omega$, the exponential map has a well-defined inverse ${\rm Exp}_p^{-1}: \M \xrightarrow{} T_p\M$ and the Riemannian distance can be written as $d(p,q) = \| {\rm Exp}_p^{-1}(q) \|_p = \| {\rm Exp}_{q}^{-1}(p)\|_{q}$. Parallel transport $\Gamma_{p}^q : T_p\M \xrightarrow{} T_q\M$ transports tangent vector along the geodesic while being isometric, i.e., $\langle \xi , \zeta \rangle_p  = \langle \Gamma_p^q \xi, \Gamma_p^q \zeta \rangle_q$ for any $\xi, \zeta \in T_p\M$.

\subsubsection*{Riemannian product manifolds} 
The product of Riemannian manifolds $\M = \M_x \times \M_y$ is a Riemannian manifold with the Riemannian metric defined as, for any $p = (x,y) \in \M$, and $(u,u'), (v,v') \in T_p\M$, $\langle (u, u'), (v, v') \rangle_p = \langle u, v \rangle^{\M_x}_x + \langle u', v' \rangle^{\M_y}_y$, where $\langle \cdot, \cdot \rangle^{\M_x}$, $\langle \cdot ,\cdot\rangle^{\M_y}$ are Riemannian metrics on $\M_x, \M_y$ respectively. From the metric, one can derive the geodesic, the exponential map, parallel transport, Riemannian distance, which also admit a product structure. See more details in \cite{boumal2020introduction}.

\subsubsection*{Riemannian optimization ingredients}
Riemannian optimization treats the constrained problem as an unconstrained problem on manifold by generalizing the notions of gradient and Hessian. For a differentiable function $h:\M \xrightarrow{} \sR$, the Riemannian gradient at $p$, $\grad h(p)$ is a tangent vector that satisfies $\langle \grad h(p), \xi \rangle_p = \D h(p)[\xi]$ for any $\xi \in T_p\M$ where $\D h(p)[\xi]$ is the directional derivative of $h$ along $\xi$. The Riemannian Hessian of $h$, $\hess h(p): T_p\M \xrightarrow{} T_p\M$ is a symmetric linear operator, defined as the covariant derivative of the Riemannian gradient. For a bi-function $f: \M_x \times \M_y \xrightarrow{} \sR$, we can similarly define Riemannian partial gradient $\grad_x f(x,y), \grad_y f(x,y)$ as Riemannian gradient for $x, y$, holding the other variable constant. The Riemannian cross derivative $\grad_{xy}^2 f(x,y): T_x\M_x \xrightarrow{} T_y \M_y$ is defined as $\grad_{xy}^2 f(x,y)[u]  := \D_x \grad_y f(x,y)[u]$ and similarly for $\grad_{yx}^2 f(x,y)$.

\subsubsection*{Riemannian geodesic convex optimization}
Geodesic convexity \cite{vishnoi2018geodesic,boumal2020introduction} generalizes the notion of convexity to Riemannian manifold. A \textit{geodesic convex set} $\Omega \subseteq \M$ requires for any two points in the set, there exist a geodesic (on $\M$) connecting them that lies entirely in the set. From this definition, any connected, complete Riemannian manifold is geodesic convex itself. A function $h: \Omega \xrightarrow{} \sR$ is \textit{geodesic convex} if for any $p, q \in \Omega$, it satisfies that $h(\gamma(t)) \leq (1-t) h(p) + t h(q)$ for $t \in [0,1]$ and $\gamma$ is a geodesic connecting $p,q$. A function is \textit{geodesic linear} if it is both geodesic convex and geodesic concave. A twice differentiable function $h$ is \textit{geodesic} $\mu$-\textit{strongly convex} if $\frac{ d^2h(\gamma(t))}{dt^2} \geq \mu$. We call a function $h(p)$ g-(strongly)-convex if it is geodesic (strongly) convex. Similarly, we call a function $f(x,y)$ g-(strongly)-convex-concave if it is geodesic (strongly) convex in $x$ and geodesic (strongly) concave in $y$.

Next, we define the spectrum of a linear operator on the tangent space, which is used to analyze the Riemannian Hessian as well as the Riemannian cross derivatives in the subsequent sections.

\begin{definition}[Spectrum of a linear operator]
Consider a linear operator $T : V \xrightarrow{} W$ where $V, W$ are two inner product spaces. 
If $V = W$, and $T$ is symmetric, i.e., $T = T^*$, where $T^*$ is the adjoint operator of $T$, then we say $(\lambda, v)$ is an eigenpair of $T$ if $T[v] = \lambda v$. In general, when $V \neq W$, the singular value $\sigma$ of $T$ is the square root of the eigenvalues of $T^* \circ T$. 
\end{definition}

We use $\lambda_{\min}/\lambda_{\max}$ and $\sigma_{\min}/\sigma_{\max}$ to represent the smallest/largest eigenvalues and singular values, respectively. We also use $\lambda_{\rm |min|}$ to denote the minimum eigenvalue in magnitude. Below, we introduce several function classes on manifolds, generalizing the Lipschitz continuity as well as the Polyak--\L{}ojasiewicz condition from the Euclidean space \cite{royden1988real,polyak1963gradient} 

\begin{definition}[Lipschitz continuity \cite{boumal2020introduction}]
Let $L_0, L_1, L_2 > 0$. \hfill
\begin{enumerate}[label={(\arabic*).}]
    \item A real-valued function $h: \M \xrightarrow{} \sR$ is $L_0$-Lipschitz continuous if for all $p \in \M$, $\| \grad h(p) \|_p \leq L_0$.
    \item  A vector field $V \in \mathfrak{X}(\M)$ is $L_1$-Lipschitz continuous if for all $p \in \M$ and $s \in T_p\M$ such that $q = {\rm Exp}_p(s) \in \Omega$, a totally normal neighbourhood of $p$, it satisfies $\| \Gamma_q^p V(q) - V(p) \|_x \leq L_1 \|s \|_p$. 
    \item A linear operator $H(p): T_p\M \xrightarrow{} T_p\M$ is $L_2$-Lipschitz continuous if for all $p \in \M$ and $q = {\rm Exp}_p(s) \in \Omega$, it satisfies $\| \Gamma_{q}^p \circ H(q) \circ \Gamma_p^q - H(p) \|_p \leq L_2 \| s \|_p$.
\end{enumerate}
\end{definition}

\begin{definition}[Polyak--\L{}ojasiewicz (PL) condition on Riemannian manifold \cite{zhang2016riemannian,kasai2018riemannian,han2021improved}]
\label{riemannian_PL_condition}
A function $h:\M \xrightarrow{} \sR$ satisfies the PL condition on Riemannian manifold if for any $p \in \M$, there exists $\delta > 0$ such that $ \frac{1}{2}\| \grad h(p) \|^2_p \geq \delta( h(p) - h(p^*))$, where $p^* = \argmin_{p \in \M} h(p)$ is the global minimizer of $h$.
\end{definition}

The following lemma shows the connection between smoothness of a function on manifold and its Lipschitz Riemannian gradient, which is fundamental for convergence analysis.
\begin{lemma}[Lipschitz Riemannian gradient and smoothness \cite{boumal2020introduction}]
\label{lemma_smoothness_grad}
For a function $h: \M \xrightarrow{} \sR$, its Riemannian gradient is $L_1$-Lipschitz continuous if and only if $\| \hess h(p) \|_p \leq L_1$ for all $p \in \M$. Suppose $h$ has $L_1$-Lipschitz Riemannian gradient, then $h$ is $L_1$-smooth on $\M$ with $|h(q) -  h(p) - \langle \grad h(p), s \rangle_p |  \leq \frac{L_1}{2} \| s\|_p^2$, for all $q = {\rm Exp}_p(s) \in \Omega$ and $p \in \M$.
\end{lemma}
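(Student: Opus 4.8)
The plan is to establish the two claims of Lemma~\ref{lemma_smoothness_grad} separately, both by reducing to the corresponding Euclidean statements along a single geodesic.

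\textbf{Equivalence of $L_1$-Lipschitz Riemannian gradient and bounded Hessian.} For the forward direction, fix $p \in \M$ and $\xi \in T_p\M$ with $\|\xi\|_p = 1$, and consider the geodesic $\gamma(t) = {\rm Exp}_p(t\xi)$. I would relate the Hessian to the gradient via the fact that $\hess h(p)[\xi]$ is the covariant derivative $\frac{D}{dt}\big|_{t=0} \grad h(\gamma(t))$, which equals $\lim_{t \to 0} \frac{1}{t}\big(\Gamma_{\gamma(t)}^p \grad h(\gamma(t)) - \grad h(p)\big)$ since parallel transport along a geodesic is what computes the covariant derivative. Taking norms and using the $L_1$-Lipschitz bound with $s = t\xi$ gives $\|\hess h(p)[\xi]\|_p \leq \lim_{t\to 0} \frac{1}{t} L_1 \|t\xi\|_p = L_1$, hence $\|\hess h(p)\|_p \le L_1$. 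For the converse, I would define $V(t) := \Gamma_{\gamma(t)}^p \grad h(\gamma(t)) \in T_p\M$ along the geodesic $\gamma$ from $p$ to $q = {\rm Exp}_p(s)$ with $\gamma(0)=p$, $\gamma(1)=q$, $\gamma'(0) = s$. Then $V$ is a smooth curve in the fixed vector space $T_p\M$, and its derivative is $V'(t) = \Gamma_{\gamma(t)}^p \big(\frac{D}{dt}\grad h(\gamma(t))\big) = \Gamma_{\gamma(t)}^p \hess h(\gamma(t))[\gamma'(t)]$ using compatibility of parallel transport with the connection. Since parallel transport is an isometry and $\|\gamma'(t)\|_{\gamma(t)} = \|s\|_p$ (geodesics have constant speed), we get $\|V'(t)\|_p \le \|\hess h(\gamma(t))\|_{\gamma(t)} \|s\|_p \le L_1 \|s\|_p$. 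Integrating, $\|\Gamma_q^p \grad h(q) - \grad h(p)\|_p = \|V(1) - V(0)\|_p \le \int_0^1 \|V'(t)\|_p\,dt \le L_1\|s\|_p$, which is the $L_1$-Lipschitz continuity of $\grad h$.

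\textbf{Descent-type inequality.} For the second claim, define the scalar function $\phi(t) := h(\gamma(t))$ on $[0,1]$ where $\gamma(t) = {\rm Exp}_p(ts)$, so $\phi(0) = h(p)$ and $\phi(1) = h(q)$. Then $\phi'(t) = \langle \grad h(\gamma(t)), \gamma'(t)\rangle_{\gamma(t)}$, and in particular $\phi'(0) = \langle \grad h(p), s\rangle_p$. Writing $h(q) - h(p) - \langle \grad h(p), s\rangle_p = \phi(1) - \phi(0) - \phi'(0) = \int_0^1 (\phi'(t) - \phi'(0))\,dt$, I would bound $|\phi'(t) - \phi'(0)|$ using the isometry of parallel transport: $\phi'(t) = \langle \Gamma_{\gamma(t)}^p \grad h(\gamma(t)), \Gamma_{\gamma(t)}^p \gamma'(t)\rangle_p$ and $\Gamma_{\gamma(t)}^p \gamma'(t) = \gamma'(0) = s$ because the velocity field of a geodesic is parallel. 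Hence $\phi'(t) - \phi'(0) = \langle \Gamma_{\gamma(t)}^p \grad h(\gamma(t)) - \grad h(p), s\rangle_p$, and by Cauchy--Schwarz together with the just-proven $L_1$-Lipschitz continuity of $\grad h$, $|\phi'(t) - \phi'(0)| \le \| \Gamma_{\gamma(t)}^p \grad h(\gamma(t)) - \grad h(p)\|_p \|s\|_p \le L_1 \|ts\|_p \|s\|_p = L_1 t \|s\|_p^2$. Integrating over $t \in [0,1]$ yields $|h(q) - h(p) - \langle \grad h(p), s\rangle_p| \le \frac{L_1}{2}\|s\|_p^2$, as desired.

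\textbf{Main obstacle.} The routine calculus is straightforward; the delicate points are the geometric identities that make everything collapse to the one-dimensional Euclidean arguments: that the covariant derivative of a vector field along a geodesic is computed by differentiating its parallel-transported version, that parallel transport commutes with the connection (so $V'(t) = \Gamma\,\hess h[\gamma']$), that a geodesic's velocity is parallel along itself with constant norm, and that all of this is valid only within the totally normal neighbourhood $\Omega$ where ${\rm Exp}_p^{-1}$ and the minimizing geodesic are well-defined. I would cite \cite{boumal2020introduction} for these standard facts rather than reproving them, and take care throughout that the geodesic $\gamma([0,1])$ stays inside $\Omega$ so that the Lipschitz hypotheses apply at every $\gamma(t)$.
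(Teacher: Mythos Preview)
Your proof is correct and follows the standard route (reduce to a one-variable argument along a geodesic, use that parallel transport computes the covariant derivative, and integrate). Note however that the paper does not actually prove this lemma: it is stated with a citation to \cite{boumal2020introduction} and used as a black box, so there is no ``paper's own proof'' to compare against---your argument is essentially the one found in that reference.
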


{
\subsection*{Notations}
Here, we summarize the main notations used in the paper. We use $\nabla, \nabla^2$, $\grad$, and $\hess$ to represent the Euclidean gradient, Euclidean Hessian, Riemannian gradient, and Riemannian Hessian respectively. The boldface $\bnabla$ is used to denote the Riemannian connection. For a bi-function $f(x,y)$, we denote $\nabla_x f(x,y)$, $\nabla_y f(x,y)$ as the partial Euclidean derivative with respect to $x$, $y$, respectively, if $x, y \in \sR^d$. Similarly for $x, y\in \M$, $\grad_x f(x,y), \grad_y f(x,y)$ denote the partial Riemannian gradients. We also make use of $\grad_{xy}^2 f(x,y), \grad_{yx}^2 f(x,y)$ to represent the  Riemannian cross derivatives. We use $\langle \cdot, \cdot \rangle_p^\M$ to represent the Riemannian metric at $p \in\M$. When the manifold considered is clear, we omit the superscript for clarity. Furthermore, we use $\langle \cdot, \cdot \rangle_2$ to denote the Euclidean inner product. 
}

\subsection{Min-max optimization}
\label{related_work_sect}
Here we discuss related works on min-max optimization both in the Euclidean space and on Riemannian manifolds.

\subsubsection*{In Euclidean space}
In the Euclidean space (i.e., $\mathbb{R}^n$), the standard gradient descent ascent (GDA) that follows the min-max gradient is known to cycle or diverge for simple convex-concave objectives \cite{mertikopoulos2018cycles}. To address the cycling issue, the optimistic gradient descent ascent algorithm (OGDA)~\cite{popov1980modification} modifies the GDA update to include an additional gradient momentum. 
On the other hand, the extra-gradient algorithm~(EG) \cite{extragradient1976} employs an additional min-max gradient step at every iteration. 
As shown in \cite{mokhtari2020unified,mokhtari2020convergence}, both OGDA and EG methods approximate the proximal point method \cite{rockafellar1976monotone} and converge sublinearly under convex-concave settings \cite{nemirovski2004prox,monteiro2010complexity} and linearly under strongly-convex-strongly-concave settings \cite{tseng1995linear,mokhtari2020unified}. 

However, for the more general nonconvex-nonconcave settings, finding a global saddle point satisfying \eqref{global_saddle_pt} is difficult and several existing works \cite{chasnov2020convergence,adolphs2019local,mazumdar2019finding,schafer2019competitive} aim to find a {local} saddle point that satisfies \eqref{global_saddle_pt} in a local neighbourhood. It should be noted that when the function is convex-concave, all local saddle points are global.

A necessary set of conditions for the saddle points is that they satisfy the first-order stationarity, i.e., the gradients with respect to $x$ and $y$ vanish. This motivates the Euclidean Hamiltonian gradient descent (EHGD) \cite{mescheder2017numerics,balduzzi2018mechanics,abernethy2019last,loizou2020stochastic} approach for solving the min-max problem, which minimizes the sum of the squares of the gradient norms with respect to $x$ and $y$. 
It should be noted that EHGD works under the assumption that all such stationary points are global min-max saddle points \cite{abernethy2019last,loizou2020stochastic}. Cases are discussed where this assumption is satisfied, which allows EHGD to converge to a global min-max saddle point of the original min-max problem \cite{abernethy2019last,loizou2020stochastic}. Further, studies \cite{mescheder2017numerics,balduzzi2018mechanics,abernethy2019last,loizou2020stochastic} demonstrate good empirical performance of EHGD in a variety of applications. 

It should be noted that EHGD approaches have only been studied for unconstrained problems in the Euclidean space. Challenges in the constrained settings appear with definition of the Hamiltonian and subsequent analysis.




\subsubsection*{On Riemannian manifolds} 
There is a growing theoretical and empirical interest in solving min-max problems under Riemannian optimization framework \cite{lin2020projection,huang2021riemannian,huang2020gradient,zhang2022minimax}. 
An extension of the GDA algorithm to manifolds, named RGDA, has been proposed in \cite{huang2020gradient}. However, \cite{huang2020gradient} considers a min-max setting in which the minimization problem (in $x$) is on a manifold, but the maximization problem (in $y$) is on a convex set.  In addition, it analyzes the convergence when the maximization problem over $y$ is strongly concave. Hence, \cite{huang2020gradient} does not study the general Riemannian min-max problem (\ref{main_riem_minmax}). It discusses the convergence of their algorithm to first-order stationary points of the min-max problem. Additionally, they propose different stochastic extensions of their algorithm and analyze their convergence.

Recently, \cite{zhang2022minimax} has proposed a Riemannian corrected extra-gradient algorithm (RCEG) for the Riemannian min-max problems (\ref{main_riem_minmax}), which contains two steps. First, RCEG takes a step similar to the RGDA update. Then, starting from the newly obtained point, RCEG combines the RGDA direction with the direction of the first step. In the g-convex-concave settings, this correction allows \cite{zhang2022minimax} to prove {(local)} convergence of RCEG to global min-max saddle points of (\ref{main_riem_minmax}). {The convergence is however analyzed only for averaged iterates. After the submission of this work, we notice that a recent paper \cite{jordan2022first} proves both last-iterate and average-iterate convergence of RCEG to saddle points under g-convex-concave and g-strongly-convex-concave settings. They also discuss average-iterate and last-iterate convergence of RGDA under g-convex-concave and g-strongly-convex-concave settings, respectively. Nevertheless, the convergence analysis requires a bounded domain (and curvature) and a carefully chosen stepsize that depends on the curvature and diameter bound of the domain. In contrast, we have shown in this work global convergence to saddle points with stepsize that only depends on the Lipschitz constants of the objective.

More details on the RGDA and RCEG algorithms as well as the comparisons on the convergence analysis are in Appendix \ref{app:sec:rgda_rceg}.
}

\section{Riemannian Hamiltonian gradient methods}
\label{sect_hamiltonian}




As mentioned earlier, the Euclidean Hamiltonian approach \cite{mescheder2017numerics,balduzzi2018mechanics,abernethy2019last,loizou2020stochastic} is a popular approach to tackle the min-max problem (\ref{main_riem_minmax}) when $\M_x $ and $\M_y$ are restricted to the Euclidean space. Specifically, the Euclidean Hamiltonian function $\mathcal{E}$ is defined as,
\begin{equation}
    \mathcal{E}(x,y) := \frac{1}{2}\| \nabla_xf(x,y) \|_2^2 + \frac{1}{2}\| \nabla_yf(x,y) \|_2^2, \label{eq:EuclideanHamiltonian}
\end{equation}
where $\nabla_xf(x,y)$ and $\nabla_y f(x,y)$ are the partial derivatives of $f$ with respect to $x$ and $y$, respectively. Here, $\|\cdot \|_2$ denotes the Frobenius norm. The global minimum of the function $\mathcal{E}$ is attained when $ \mathcal{E}(x,y) = 0$, i.e., $\nabla_xf(x,y) = 0$ and $\nabla_yf(x,y) =0$. This corresponds to a first-order stationary point of the function $f$. Hence, minimization of $\mathcal{E}$ in (\ref{eq:EuclideanHamiltonian}), becomes a good proxy to solve the original min-max problem.

Building on the Euclidean Hamiltonian approach, generalization to the Riemannian min-max problem (\ref{main_riem_minmax}) requires understanding of first-order stationary points on manifolds $\M_x$ and $\M_y$. These are necessarily identified with the points where the Riemannian gradient of $f$ vanishes. This leads to our proposed Riemannian Hamiltonian function as 
\begin{equation}
    \gH(x,y) := \frac{1}{2}\| \grad_x f(x,y)  \|_x^2 + \frac{1}{2}\| \grad_y f(x,y) \|_y^2, \label{RiemHamiltonian}
\end{equation}
where $\grad_x f(x,y)$ and $\grad_y f(x,y)$ are the Riemannian partial gradients of $f$ with respect to $x$ and $y$ respectively. Here, $\| \grad_x f(x,y)  \|_x^2 = \langle \grad_x f(x,y), \grad_x f(x,y) \rangle_x^{\gM_x}$ is the square of the gradient norm in the Riemannian metric sense on $\gM_x$. Similarly, $\| \grad_y f(x,y)  \|_y^2 \allowbreak = \langle \grad_y f(x,y), \grad_y f(x,y) \rangle_y^{\gM_y}$ is the square of the norm on $\gM_y$. 

\begin{remark}
The proposed Riemannian Hamiltonian function (\ref{RiemHamiltonian}) generalizes the Euclidean Hamiltonian function (\ref{eq:EuclideanHamiltonian}) in two different ways: 
\begin{itemize}
    \item[1)] Equation (\ref{RiemHamiltonian}) implicitly embeds the manifold geometry of $\M_x, \M_y$ into the Hamiltonian function.
    \item[2)] Equation (\ref{RiemHamiltonian}) generalizes the Euclidean metric considered in (\ref{eq:EuclideanHamiltonian}) to a Riemannian metric. This generalization allows to use other varying metrics for min-max problems \textit{in the Euclidean space}, e.g., the Fisher information metric \cite{douik2019manifold} or real-projective space metrics \cite[Chapter~2]{absil2009optimization}. 
\end{itemize}
\end{remark}





It should be noted that the Riemannian Hamiltonian (\ref{RiemHamiltonian}) can be viewed on the product manifold $\M = \M_x \times \M_y$, i.e., for $p = (x, y) \in \M$, the Riemannian gradient is $\grad_p f(p) = (\grad_x f(x,y), \grad_y f(x,y))$, and therefore, $\gH(x,y) =  \|\grad_p f(p) \|^2_p$. Hence, we propose to solve the following problem on the product manifold as
\begin{equation}
    \min_{p \in \M} \left\{ \gH(p) =\frac{1}{2}\| \grad f(p) \|^2_p \right \} . \label{MainRiemHamiltonian}
\end{equation}
Similar to the EHGD approaches \cite{abernethy2019last,loizou2020stochastic}, we work with the following assumption.
\begin{assumption}
\label{stationary_assumption}
The objective $f$ admits at least one stationary point and all stationary points are global min-max saddle points.
\end{assumption}
It is worth noticing that under Assumption \ref{stationary_assumption}, solving (\ref{MainRiemHamiltonian}) is equivalent to solving (\ref{main_riem_minmax}). On Riemannian manifolds, Assumption \ref{stationary_assumption} holds when $f$ is g-convex-concave. 

We now show that the Riemannian gradient of the Riemannian Hamiltonian $\gH(p)$ admits a simple expression.

\begin{proposition}
\label{gradient_hamiltonian_prop}
Riemannian gradient of $\gH$ is $\grad \gH(p) = \hess f(p) [\grad f(p)]$.
\end{proposition}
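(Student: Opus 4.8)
The plan is to verify the defining variational property of the Riemannian gradient directly: for every $p \in \M$ and every $\xi \in T_p\M$, I would show that $\langle \hess f(p)[\grad f(p)], \xi \rangle_p = \D\gH(p)[\xi]$, which by nondegeneracy of the metric pins down $\grad\gH(p)$ uniquely. Since $f$ is jointly smooth, $\grad f$ is a smooth vector field on $\M = \M_x\times\M_y$ and hence $\gH(p) = \frac12\langle \grad f(p), \grad f(p)\rangle_p$ is a smooth real-valued function, so $\D\gH(p)[\xi]$ is well defined; I would compute it by choosing a smooth curve $\gamma$ with $\gamma(0)=p$ and $\gamma'(0)=\xi$ and differentiating $t\mapsto \gH(\gamma(t)) = \frac12\langle \grad f(\gamma(t)), \grad f(\gamma(t))\rangle_{\gamma(t)}$ at $t=0$.

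The key step is to differentiate this inner product along $\gamma$. Here I would invoke compatibility of the Riemannian (Levi-Civita) connection $\bnabla$ with the metric, namely that for smooth vector fields $V,W$ one has $\frac{d}{dt}\langle V(\gamma(t)), W(\gamma(t))\rangle_{\gamma(t)} = \langle \bnabla_{\gamma'(t)}V, W\rangle_{\gamma(t)} + \langle V, \bnabla_{\gamma'(t)}W\rangle_{\gamma(t)}$. Applying this with $V=W=\grad f$ and evaluating at $t=0$ gives $\D\gH(p)[\xi] = \langle \bnabla_\xi\grad f, \grad f(p)\rangle_p$. Using that the Riemannian Hessian is by definition the covariant derivative of the Riemannian gradient, $\hess f(p)[\xi] = \bnabla_\xi\grad f$, this becomes $\D\gH(p)[\xi] = \langle \hess f(p)[\xi], \grad f(p)\rangle_p$. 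Finally, since $\hess f(p)$ is a self-adjoint linear operator on $T_p\M$, I would move it into the other argument to obtain $\D\gH(p)[\xi] = \langle \xi, \hess f(p)[\grad f(p)]\rangle_p$, and comparing with $\D\gH(p)[\xi] = \langle \grad\gH(p), \xi\rangle_p$ yields the claim.

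The main obstacle — indeed essentially the only place where the manifold structure enters beyond bookkeeping — is handling the $p$-dependence of the metric when differentiating the squared gradient norm: this is exactly what forces the use of metric compatibility of $\bnabla$ rather than a naive Euclidean-style product rule, and it is the reason a genuinely covariant second derivative (the Riemannian Hessian) appears rather than any coordinate-wise second derivative. On the product manifold no separate argument is needed, because $\grad f(p) = (\grad_x f, \grad_y f)$ and the metric, the connection $\bnabla$, and $\hess f$ all split as direct sums over the two factors, so the computation above applies verbatim and simultaneously recovers the split form $\gH(x,y) = \frac12\|\grad_x f\|_x^2 + \frac12\|\grad_y f\|_y^2$ used elsewhere.
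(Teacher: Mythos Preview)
Your proposal is correct and follows essentially the same argument as the paper: both use metric compatibility of the Levi-Civita connection to differentiate $\frac12\langle \grad f,\grad f\rangle$, identify $\bnabla_\xi\grad f$ as $\hess f(p)[\xi]$, and then invoke self-adjointness of the Riemannian Hessian to conclude. The only cosmetic difference is that the paper phrases the computation in terms of an arbitrary smooth vector field $U$ acting on $\gH$, whereas you work pointwise with a curve $\gamma$ and a tangent vector $\xi$; the underlying ideas and the order of steps are identical.
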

\begin{proof}
First, we see that $\gH$ is a smooth function on the manifold due to the smoothness of $f$ and its Riemannian gradient (formally characterized later in Proposition \ref{prop_ham_smooth}). For any smooth vector field ${U}: \M \xrightarrow{} T\M$, denoted as $U \in \mathfrak{X}(\M)$, we have ${U} \gH = \langle \grad \gH, {U} \rangle$, where $\langle \cdot, \cdot\rangle$ is the Riemannian metric (on any tangent space). Let $\bnabla$ be the Riemannian connection (or the Levi-Civita connection) of $\M$, which provides a way to differentiate vector fields on manifolds. By definition, the Riemannian connection satisfies the metric compatibility property \cite{absil2009optimization,boumal2020introduction}, i.e., ${U} \langle {V}, {W} \rangle = \langle \bnabla_{U} {V}, {W} \rangle + \langle {V}, \bnabla_{U} {W} \rangle$ for any vector fields ${U}, {V}, {W}$. Also, by definition, application of the Riemannian Hessian of $f: \M \xrightarrow{} \sR$ along a vector field ${U}$ is $\hess f [{U}] = \bnabla_{{U}} \grad f$. Based on these claims, we show
\begin{align*}
    {U} \gH =  \frac{1}{2} {U} \langle \grad f, \grad f \rangle = \langle \bnabla_{U} \grad f, \grad f \rangle &= \langle \hess f [{U}], \grad f \rangle \\
    &= \langle \hess f[\grad f], {U} \rangle,
\end{align*}
where the last equality follows from the self-adjoint property of the Riemannian Hessian. The proof is complete by noticing $\langle \hess f[\grad f], {U} \rangle = \langle \grad \gH, {U} \rangle$ for any ${U}$.
\end{proof}

\begin{remark}
The importance of the varying metric in the proposed Riemannian Hamiltonian (\ref{RiemHamiltonian}), can be observed in Proposition \ref{gradient_hamiltonian_prop}, where we obtain a simple expression for the Riemannian gradient of $\gH$. This allows to connect the properties of $\gH$ with that of the min-max objective $f$, discussed in detail later in Section \ref{sec:problem_classes}.
\end{remark}

\begin{remark} 
It should be noted that for the Euclidean case when $x \in \sR^m, y \in \sR^n$, existing works \cite{balduzzi2018mechanics,abernethy2019last,loizou2020stochastic} analyze the Hamiltonian methods in the form of $\bJ^\top v$, where $\bJ$ is an asymmetric Jacobian matrix and $v$ is the min-max gradient $(\nabla_xf(x,y), \allowbreak  -\nabla_y f(x,y))$. For the same setting, however, Proposition \ref{gradient_hamiltonian_prop} obtains the Hamiltonian gradient as $\bH \nabla f$, where $\bH$ and $\nabla f$ are the (Euclidean) Hessian matrix and gradient vector $\nabla f = (\nabla_x f(x,y),  \nabla_y f(x,y))$, respectively. This is not surprising as $\bJ^\top v = \bH \nabla f$. Proposition \ref{gradient_hamiltonian_prop} allows to analyze the performance of the Riemannian Hamiltonian approach in terms of the symmetric Riemannian Hessian operator. The analysis in \cite{abernethy2019last,loizou2020stochastic} heavily rely on the matrix structure of $\bJ$ and makes use of the linear algebraic properties of the Jacobian. Our approach, thanks to Proposition \ref{gradient_hamiltonian_prop}, adheres to general Riemannian manifolds as we directly deal with the operator, which is independent of the matrix structure. Hence, many of the subsequent analysis in this paper differ from \cite{abernethy2019last,loizou2020stochastic}. 
\end{remark}

\begin{algorithm}[t]
 \caption{Riemannian Hamiltonian methods (RHM)}
 \label{RHM}
 \begin{algorithmic}[1]
  \STATE Initialize $p_0 = (x_0,y_0) \in \M$.
  \FOR{$t = 0,...,T$}
  \STATE Compute the step $\xi(p_t)$ from the gradient $\grad \gH(p_t) = \hess f(p_t)[\grad f(p_t)]$.
  \STATE Update $p_{t+1} = {\rm Exp}_{p_t}\big( \xi(p_t) \big)$.
  \ENDFOR
  \STATE \textbf{Output:} $p_T$.
 \end{algorithmic} 
\end{algorithm}

To minimize the Riemannian Hamiltonian \eqref{MainRiemHamiltonian}, one can apply first-order Riemannian solvers including Riemannian steepest descent \cite{udriste2013convex}, Riemannian conjugate gradient \cite{ring2012optimization}, or second-order solvers, such as Riemannian trust-regions \cite{absil2007trust,boumal2015riemannian}, provided the Hessian (or approximated Hessian) of the Hamiltonian is available. We refer to such class of methods for solving min-max problems on manifolds collectively as Riemannian Hamiltonian methods (RHM). Its procedures are outlined in Algorithm \ref{RHM}, where the step $\xi(p_t)$ is computed depending on the selected solver.

{
\begin{remark}
We remark that Algorithm \ref{RHM} aims to solve a proxy optimization problem \eqref{MainRiemHamiltonian} where we only require the first-order information, i.e., $\grad \gH(p)$. Although the Hessian of $f$, i.e., $\hess f(p)[\grad \gH(p)]$ is used in Algorithm \ref{RHM}, this essentially corresponds to the gradient information of the proxy problem. Furthermore, from the computational perspective, Algorithm \ref{RHM} only requires one evaluation of Hessian-vector product per iteration. This is much more efficient than second-order methods, such as Riemannian trust region \cite{absil2007trust,boumal2015riemannian} or cubic regularized Newton methods \cite{agarwal2021adaptive} that require at least several oracles to such Hessian vector product each iteration. Finally, when Hessian of $f$ is unavailable, we find finite difference approximation is sufficient to achieve convergence in practice.  
\end{remark}
}

We analyze the performance of the proposed RHM. In particular, we aim to obtain the global minimizer $p^*$ of $\gH$, which satisfies $\gH(p^*) = 0$ with RHM. However, this may not always be numerically tractable without additional structures on the Riemannian Hamiltonian. One such structure is assuming the Riemannian Hamiltonian is g-convex, for which RHM converges to the optimal $p^*$  (g-convexity guarantees convergence to global optimality). This, however, may not lead to interesting problem classes for $f$. Moreover, there is no guarantee that $\gH$ is a g-convex even when $f$ is g-convex-concave. 

Another interesting structure is the Polyak--\L{}ojasiewicz (PL) condition. The PL condition \cite{polyak1963gradient} amounts to a sufficient condition to establish linear convergence for gradient-based methods to global optimality \cite{karimi2016linear}. The Riemannian version of the PL condition (Definition \ref{riemannian_PL_condition}) has been studied in \cite{zhang2016riemannian,kasai2018riemannian,zhou2019faster,han2021improved}. In Section \ref{convergence_main_subsect}, we impose the Riemannian PL condition on the Hamiltonian $\gH$ as it allows convergence of RHM to global optimality. It should be noted that functions satisfying the Riemannian PL condition subsume g-(strongly)-convex functions. In Section \ref{sec:problem_classes}, we discuss many interesting function classes of $f$ that allow the Hamiltonian $\gH$ to satisfy the condition.



\subsection{Convergence analysis}
\label{convergence_main_subsect}

To analyze the convergence of RHM, we focus on the Riemannian steepest descent direction in the main text, i.e., $\xi(p_t) =  - \eta_t  \grad \gH(p_t)$ with either fixed stepsize or variable stepsize computed from backtracking line-search \cite{boumal2019global,boumal2020introduction}. We include the details of implementing the Riemannian conjugate gradient and Riemannian trust-region methods together with their convergence analysis in Appendix \ref{RHM_CG_TR}. We make the following standard assumption \cite{absil2009optimization, boumal2020introduction,zhang2016riemannian,sato2019riemannian,zhang2022minimax} throughout the rest of the paper. We assume that our manifolds $\M_x$ and $\M_y$ are complete (and so is $\M = \M_x \times \M_y$).
\begin{assumption}
\label{smoothness_assumption}
The objective $f$, its Riemannian gradient, and its Riemannian Hessian are $L_0, L_1, L_2$-Lipschitz continuous, respectively. 
\end{assumption}
In the next proposition, we show that the Riemannian Hamiltonian $\gH$ is $L$-smooth.
\begin{proposition}[Smoothness of Riemannian Hamiltonian]
\label{prop_ham_smooth}
Under Assumption \ref{smoothness_assumption}, the Riemannian Hamiltonian is $L$-smooth with $L = L_0 L_2 + L_1^2$, i.e., for any $p \in \M, q = {\rm Exp}_p(\xi)$, it satisfies $\gH(q) \leq \gH(p) + \langle \grad\gH(p), \xi \rangle_p + \frac{L}{2} \| \xi \|^2_p$.
\end{proposition}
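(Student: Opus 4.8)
The plan is to show that $\grad\gH$ is $L$-Lipschitz continuous with $L = L_0 L_2 + L_1^2$, and then invoke Lemma~\ref{lemma_smoothness_grad} to obtain the quadratic upper bound. By Proposition~\ref{gradient_hamiltonian_prop} we have the clean expression $\grad\gH(p) = \hess f(p)[\grad f(p)]$, so the task reduces to controlling how this composite object changes along a geodesic. Concretely, fix $p\in\M$, a tangent vector $s\in T_p\M$ with $q={\rm Exp}_p(s)$ inside a totally normal neighbourhood, and estimate $\big\| \Gamma_q^p\,\grad\gH(q) - \grad\gH(p)\big\|_p$. The key algebraic manoeuvre is to insert parallel transports so that everything lives in $T_p\M$: write
\begin{align*}
\Gamma_q^p\,\grad\gH(q) &= \Gamma_q^p\, \hess f(q)\big[\grad f(q)\big] \\
&= \big(\Gamma_q^p \circ \hess f(q)\circ \Gamma_p^q\big)\big[\Gamma_q^p \grad f(q)\big],
\end{align*}
using that $\Gamma_p^q$ is the inverse (and the isometric adjoint) of $\Gamma_q^p$. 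Now add and subtract the cross term $\big(\Gamma_q^p \circ \hess f(q)\circ \Gamma_p^q\big)[\grad f(p)]$, giving
\begin{align*}
\Gamma_q^p\,\grad\gH(q) - \grad\gH(p)
&= \big(\Gamma_q^p \circ \hess f(q)\circ \Gamma_p^q\big)\big[\Gamma_q^p \grad f(q) - \grad f(p)\big] \\
&\quad + \Big(\Gamma_q^p \circ \hess f(q)\circ \Gamma_p^q - \hess f(p)\Big)\big[\grad f(p)\big].
\end{align*}

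Next I would bound the two terms separately using Assumption~\ref{smoothness_assumption} and Lemma~\ref{lemma_smoothness_grad}. For the first term: the operator norm of $\Gamma_q^p \circ \hess f(q)\circ \Gamma_p^q$ equals $\|\hess f(q)\|_q$ since parallel transport is an isometry, and this is $\le L_1$ by Lemma~\ref{lemma_smoothness_grad} (the $L_1$-Lipschitz gradient hypothesis); meanwhile $\big\|\Gamma_q^p \grad f(q) - \grad f(p)\big\|_p \le L_1\|s\|_p$ because $\grad f$ is an $L_1$-Lipschitz vector field. So the first term is at most $L_1^2\|s\|_p$. For the second term: $\big\|\grad f(p)\big\|_p \le L_0$ since $f$ is $L_0$-Lipschitz, and $\big\|\Gamma_q^p \circ \hess f(q)\circ \Gamma_p^q - \hess f(p)\big\|_p \le L_2\|s\|_p$ by the $L_2$-Lipschitz Hessian hypothesis. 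Hence the second term is at most $L_0 L_2\|s\|_p$. Combining via the triangle inequality yields $\big\|\Gamma_q^p\,\grad\gH(q) - \grad\gH(p)\big\|_p \le (L_0 L_2 + L_1^2)\|s\|_p$, i.e. $\grad\gH$ is $(L_0L_2+L_1^2)$-Lipschitz. Applying Lemma~\ref{lemma_smoothness_grad} (the "if" direction and then the smoothness inequality) with $L = L_0L_2 + L_1^2$ gives the claimed descent-type bound $\gH(q)\le \gH(p) + \langle \grad\gH(p),s\rangle_p + \frac{L}{2}\|s\|_p^2$.

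The main obstacle I anticipate is purely bookkeeping about parallel transport and adjoints: making sure the insertion $\Gamma_q^p \circ \hess f(q) \circ \Gamma_p^q$ is legitimate (it requires $\Gamma_p^q\Gamma_q^p = \id$, valid within a totally normal neighbourhood), that the operator-norm inequality $\|\Gamma_q^p\circ H\circ\Gamma_p^q\|_p = \|H\|_q$ holds by isometry, and that the Lipschitz-operator definition in the excerpt is being applied with the correct orientation of transports. A secondary subtlety is that Lemma~\ref{lemma_smoothness_grad} is stated for functions $h:\M\to\sR$, so I must first record (as already promised in the proof of Proposition~\ref{gradient_hamiltonian_prop}) that $\gH$ is smooth and twice differentiable so that $\grad\gH$ and its Lipschitz continuity are meaningful; given that, the identification of $L_1$ in Lemma~\ref{lemma_smoothness_grad} with $\|\hess f\|$ is exactly what lets the argument close. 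No curvature terms enter because we work entirely with norms of transported quantities rather than with second covariant derivatives of $\gH$ directly.
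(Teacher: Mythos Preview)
Your proposal is correct and follows essentially the same approach as the paper: both show that $\grad\gH$ is $(L_0L_2+L_1^2)$-Lipschitz by inserting an intermediate term that separates the variation of $\hess f$ from the variation of $\grad f$, bound the two pieces via Assumption~\ref{smoothness_assumption} and Lemma~\ref{lemma_smoothness_grad}, and then invoke Lemma~\ref{lemma_smoothness_grad} for the quadratic upper bound. The only cosmetic difference is that the paper works in $T_q\M$ (estimating $\|\Gamma_p^q\grad\gH(p)-\grad\gH(q)\|_q$ and splitting at $\hess f(q)[\Gamma_p^q\grad f(p)]$) whereas you work in $T_p\M$ via the conjugated operator $\Gamma_q^p\circ\hess f(q)\circ\Gamma_p^q$; by isometry of parallel transport these are equivalent.
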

\begin{proof}
According to Lemma \ref{lemma_smoothness_grad}, it is sufficient to show that the Riemannian gradient of $\gH$ is $L$-Lipschitz. From Proposition \ref{gradient_hamiltonian_prop} and Assumption \ref{smoothness_assumption}, we have for any $p \in \M$, $q = {\rm Exp}_{p}(s) \in \Omega$, the domain of exponential map around $p$,
\begin{align*}
    \| \Gamma_p^q \, \grad \gH(p) - \grad \gH(q) \|_q &= \| \Gamma_p^q \, \hess f(p)[\grad f(p)] -  \hess f(q)[\grad f(q)] \|_q \\
    &\leq \| \Gamma_p^q \, \hess f(p)[\grad f(p)] - \hess f(q) [\Gamma_p^q \, \grad f(p)] \|_q \\
    &\qquad + \| \hess f(q) [\Gamma_p^q \grad f(p)] - \hess f(q)[\grad f(q)] \|_q \\
    &= \| \hess f(p)[\grad f(p)] - \Gamma_q^p \, \hess f(q)[\Gamma_p^q \, \grad f(p)] \|_p \\
    &\qquad + \| \hess f(q) [\Gamma_p^q \grad f(p)] - \hess f(q)[\grad f(q)] \|_q \\
    &\leq L_2 \| \grad f(p)\|_p \, \| s\|_p + L_1 \| \Gamma_p^q \, \grad f(p)  -\grad f(q)\|_q \\
    &\leq  (L_0 L_2 + L_1^2) \| s\|_p,
\end{align*}
where we apply the triangle inequality and the isometry property of parallel transport. 
\end{proof}

If the Hamiltonian $\gH$ satisfies the Riemannian PL condition, then we show that Algorithm \ref{RHM} with the steepest descent update (RHM-SD) converges linearly to the global minimizer of $\gH$.

We begin with the convergence result for RHM-SD with fixed stepsize.


\begin{theorem}[Linear convergence of RHM-SD with fixed stepsize]
\label{theorem_convergence_RHGD}
Let $f$ satisfy Assumption \ref{smoothness_assumption} and $\gH$ satisfy the Riemannian PL condition, i.e., $\frac{1}{2}\| \grad \gH(p) \|^2_p \geq \delta \gH(p)$ (with $\gH(p^*) = 0$). Consider Algorithm \ref{RHM} using steepest descent direction with fixed stepsize $\eta_t = \eta = 1/L$, where $L = L_0 L_2 + L_1^2$. Then, the iterates $p_t$ satisfy $\| \grad f(p_t) \|^2_{p_t} \leq (1  - \frac{\delta}{L} )^t \|\grad f(p_0) \|^2_{p_0}$. 
\end{theorem}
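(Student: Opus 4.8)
The plan is to combine the $L$-smoothness of $\gH$ from Proposition \ref{prop_ham_smooth} with the Riemannian PL condition in the standard way, exactly as one does for gradient descent on PL functions in the Euclidean setting, then translate the resulting decay on $\gH(p_t)$ back to a decay on $\|\grad f(p_t)\|^2_{p_t}$ via the definition \eqref{RiemHamiltonian} of the Hamiltonian.

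\textbf{Step 1: one-step descent from smoothness.} I would start from the descent inequality in Proposition \ref{prop_ham_smooth} applied to $p = p_t$, $q = p_{t+1} = {\rm Exp}_{p_t}(\xi(p_t))$ with $\xi(p_t) = -\eta\,\grad\gH(p_t)$ and $\eta = 1/L$. This gives $\gH(p_{t+1}) \leq \gH(p_t) - \eta\|\grad\gH(p_t)\|^2_{p_t} + \frac{L\eta^2}{2}\|\grad\gH(p_t)\|^2_{p_t} = \gH(p_t) - \frac{1}{2L}\|\grad\gH(p_t)\|^2_{p_t}$, using $\eta = 1/L$ so that $\eta - L\eta^2/2 = 1/(2L)$. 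I should note that $p_{t+1}$ lies in the domain of the exponential map since $\M$ is complete, so Proposition \ref{prop_ham_smooth} applies.

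\textbf{Step 2: invoke the PL condition.} By hypothesis $\frac{1}{2}\|\grad\gH(p_t)\|^2_{p_t} \geq \delta\,\gH(p_t)$, i.e. $\|\grad\gH(p_t)\|^2_{p_t} \geq 2\delta\,\gH(p_t)$. Substituting into the result of Step 1 yields $\gH(p_{t+1}) \leq \gH(p_t) - \frac{\delta}{L}\gH(p_t) = (1 - \tfrac{\delta}{L})\gH(p_t)$. Iterating from $t=0$ gives $\gH(p_t) \leq (1 - \tfrac{\delta}{L})^t \gH(p_0)$.

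\textbf{Step 3: translate back to the objective gradient.} Since $\gH(p) = \frac{1}{2}\|\grad f(p)\|^2_p$ (viewing the Hamiltonian on the product manifold as in \eqref{MainRiemHamiltonian}), the bound $\gH(p_t) \leq (1-\tfrac{\delta}{L})^t\gH(p_0)$ is exactly $\frac{1}{2}\|\grad f(p_t)\|^2_{p_t} \leq (1-\tfrac{\delta}{L})^t \cdot \frac{1}{2}\|\grad f(p_0)\|^2_{p_0}$, and cancelling the factor $\frac{1}{2}$ gives the claimed inequality $\|\grad f(p_t)\|^2_{p_t} \leq (1-\tfrac{\delta}{L})^t\|\grad f(p_0)\|^2_{p_0}$.

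\textbf{Main obstacle.} This proof is essentially routine once Propositions \ref{gradient_hamiltonian_prop} and \ref{prop_ham_smooth} are in hand; the only subtlety is the bookkeeping with the retraction/exponential map (ensuring $p_{t+1}$ stays in the region where the smoothness estimate is valid, which is handled by completeness of $\M$) and checking that $1 - \delta/L \in [0,1)$ so the rate is meaningful — this follows since $\delta \leq L$ is forced by comparing the PL inequality with the smoothness inequality evaluated near the minimizer (or can simply be noted as a consequence of $\gH \geq 0$ with $\gH(p^*) = 0$). I would present Steps 1--3 compactly, since the heavy lifting was already done in establishing $L$-smoothness of $\gH$.
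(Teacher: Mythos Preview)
Your proposal is correct and follows essentially the same approach as the paper's proof: apply the $L$-smoothness of $\gH$ from Proposition~\ref{prop_ham_smooth} to the steepest-descent step with $\eta=1/L$, then invoke the Riemannian PL condition to obtain the contraction $\gH(p_{t+1})\le(1-\delta/L)\gH(p_t)$, and conclude by recursion and the identity $\gH(p)=\tfrac12\|\grad f(p)\|_p^2$. The paper's argument is the same three-line computation; your additional remarks on completeness and on $1-\delta/L\in[0,1)$ are fine but not needed for the proof as stated.
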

\begin{proof}
From the smoothness of the Riemannian Hamiltonian $\gH$ (Proposition~\ref{prop_ham_smooth}, Lemma~\ref{lemma_smoothness_grad}) and the gradient update in Algorithm \ref{RHM}, we have
\begin{align*}
    \gH(p_{t+1}) - \gH(p_t) &\leq - \eta \| \grad \gH(p_t) \|^2_{p_t} +\frac{\eta^2 L}{2} \| \grad \gH(p_t) \|_{p_t}^2 \\
    &= - \frac{1}{2L}  \| \grad \gH(p_t) \|_{p_t}^2 \leq - \frac{\delta}{L} \gH(p_t),
\end{align*}
where the last inequality employs the Riemannian PL condition. This leads to $\gH(p_{t+1}) \leq (1- \frac{\delta}{L}) \gH(p_t)$. Applying this result recursively completes the proof. 
\end{proof}

Line-search methods are practically favourable because they adapt the stepsize without requiring the knowledge of the Lipschitz constant $L$. Here, we consider the backtracking line-search for choosing stepsize $\eta_t$ for Riemannian steepest descent, which is commonly used in practice. Given an initial stepsize $\bar{\eta}$, the backtracking line-search iteratively decreases the stepsize by a factor of $\varrho \in (0,1)$ until the Armijo-type sufficient decrease condition is satisfied, i.e., 
\begin{equation}
    \gH(p_t) - \gH({\rm Exp}_{p_t}(\eta_t \zeta(p_t)) \geq  r_1 \eta_t \langle -\grad \gH(p_t), \zeta(p_t) \rangle_{p_t}, \label{armijo_condition}
\end{equation}
for some update direction $\zeta(p_t)$. The complete procedure is included in Appendix \ref{appendix_linesearch}. We next present the convergence for RHM-SD with backtracking linesearch.

\begin{theorem}[Linear convergence of RHM-SD with backtracking line-search]
\label{theorem_convergence_linesearch}
Under the same setting as in Theorem \ref{theorem_convergence_RHGD}, consider Algorithm \ref{RHM} using the steepest descent direction with backtracking line-search, parameters $r_1, \rho \in (0,1)$, and an initial stepsize $\bar{\eta}$. Then, the iterates $p_t$ satisfy
$$\| \grad f(p_t) \|^2_{p_t} \leq  \left( 1  - 2\min \left\{ \bar{\eta} r_1, \frac{2\varrho(1-r_1)r_1}{L} \right\} \delta \right)^t \|\grad f(p_0) \|^2_{p_0}.$$
\end{theorem}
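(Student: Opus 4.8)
The plan is to mimic the fixed-stepsize argument of Theorem~\ref{theorem_convergence_RHGD}, but replace the clean ``descent by $\frac{1}{2L}\|\grad\gH(p_t)\|^2$'' step by a lower bound on the decrease guaranteed by the backtracking line-search. First I would recall that the search direction is $\zeta(p_t) = \grad\gH(p_t)$ in magnitude but we descend along $-\grad\gH(p_t)$, so the Armijo condition \eqref{armijo_condition} reads $\gH(p_t) - \gH({\rm Exp}_{p_t}(-\eta_t\grad\gH(p_t))) \ge r_1\eta_t\|\grad\gH(p_t)\|^2_{p_t}$. The crux is to show the accepted stepsize is bounded below: $\eta_t \ge \min\{\bar\eta,\ 2\varrho(1-r_1)/L\}$. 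This is the standard Riemannian line-search lemma (cf.\ \cite{boumal2019global,boumal2020introduction}): if $\bar\eta$ itself satisfies \eqref{armijo_condition} we are done with $\eta_t=\bar\eta$; otherwise the line-search has rejected some $\eta > \eta_t/\varrho$, and combining the $L$-smoothness of $\gH$ (Proposition~\ref{prop_ham_smooth}) with the failure of \eqref{armijo_condition} at that $\eta$ gives $\eta > 2(1-r_1)/L$, hence $\eta_t > 2\varrho(1-r_1)/L$.

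Next I would convert the per-step decrease into a contraction. From \eqref{armijo_condition} with the accepted $\eta_t$ and the stepsize lower bound,
\[
\gH(p_{t+1}) \le \gH(p_t) - r_1\eta_t\|\grad\gH(p_t)\|^2_{p_t} \le \gH(p_t) - r_1\min\Bigl\{\bar\eta,\ \tfrac{2\varrho(1-r_1)}{L}\Bigr\}\|\grad\gH(p_t)\|^2_{p_t}.
\]
Now invoke the Riemannian PL condition $\frac{1}{2}\|\grad\gH(p_t)\|^2_{p_t}\ge\delta\gH(p_t)$, i.e.\ $\|\grad\gH(p_t)\|^2_{p_t}\ge 2\delta\gH(p_t)$, to obtain
\[
\gH(p_{t+1}) \le \Bigl(1 - 2\min\Bigl\{\bar\eta r_1,\ \tfrac{2\varrho(1-r_1)r_1}{L}\Bigr\}\delta\Bigr)\gH(p_t).
\]
Iterating this recursively from $t=0$ yields $\gH(p_t) \le \bigl(1 - 2\min\{\bar\eta r_1,\ 2\varrho(1-r_1)r_1/L\}\delta\bigr)^t\gH(p_0)$, and since $\gH(p) = \frac12\|\grad f(p)\|^2_p$ this is exactly the claimed bound on $\|\grad f(p_t)\|^2_{p_t}$ (the factor $\frac12$ cancels on both sides).

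The main obstacle is the stepsize lower bound: one must be careful that the backtracking procedure in Appendix~\ref{appendix_linesearch} indeed inspects a stepsize $\eta$ with $\eta_t = \varrho\,\eta$ (or $\eta_t = \bar\eta$) at which \eqref{armijo_condition} fails, and that the failure inequality $\gH({\rm Exp}_{p_t}(-\eta\grad\gH(p_t))) > \gH(p_t) - r_1\eta\|\grad\gH(p_t)\|^2_{p_t}$ can be combined with the smoothness upper bound $\gH({\rm Exp}_{p_t}(-\eta\grad\gH(p_t))) \le \gH(p_t) - \eta\|\grad\gH(p_t)\|^2_{p_t} + \frac{L\eta^2}{2}\|\grad\gH(p_t)\|^2_{p_t}$ to cancel the common $\|\grad\gH(p_t)\|^2_{p_t}$ factor and solve for $\eta$. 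Everything else is bookkeeping, and one should note the result holds regardless of whether the exponential map is globally defined on $\M$ since $\M$ is assumed complete. I would also remark that the same scheme applies verbatim to the Riemannian conjugate gradient and trust-region variants in Appendix~\ref{RHM_CG_TR}, modulo the usual angle/sufficient-decrease conditions on the search direction.
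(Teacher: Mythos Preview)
Your proposal is correct and follows exactly the route the paper takes: the paper's proof is a one-line appeal to \cite[Lemma~4.12]{boumal2020introduction} (the standard backtracking stepsize lower bound $\eta_t\ge\min\{\bar\eta,2\varrho(1-r_1)/L\}$ from $L$-smoothness) combined with the Riemannian PL inequality, and you have simply unpacked that lemma in full. The only superfluous parts are the closing remarks about completeness and about RHM-CG/RHM-TR, which are harmless but not needed here.
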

\begin{proof}
Given $\gH$ is $L$-smooth, the proof follows from \cite[Lemma~4.12]{boumal2020introduction} and the Riemannian PL condition.
\end{proof}

{
\begin{remark}
It should be highlighted that the convergence rates to global saddle points obtained in Theorems \ref{theorem_convergence_RHGD}, \ref{theorem_convergence_linesearch} are independent of the manifold curvature (which we achieve via solving a proxy Hamiltonian problem \eqref{MainRiemHamiltonian}). In contrast, the linear convergence rates shown in \cite{zhang2022minimax,jordan2022first} are curvature dependent. 
\end{remark}
}

\subsection{Important problem classes for RHM}
\label{sec:problem_classes}

We now discuss the instances of $f$ where the Riemannian Hamiltonian satisfies the Riemannian PL condition (Definition \ref{riemannian_PL_condition}). This allows RHM (Algorithm \ref{RHM}) to converge to global min-max saddle points of (\ref{main_riem_minmax}). 

From the expression of $\grad \gH(p)$ in Proposition \ref{gradient_hamiltonian_prop}, we observe that if all eigenvalues of $\hess f(p)$ are lower bounded in magnitude (i.e., $|\lambda| \geq \alpha > 0$), then the Riemannian Hamiltonian $\gH$ satisfies the Riemannian PL condition with $\delta = \alpha^2$. This is because
\begin{equation}\label{eq:eigvalue_bound_Hessian}
    \underbrace{\frac{1}{2}\| \grad \gH(p) \|^2_p \geq \alpha^2 \gH(p)}_{\text{Riemannian PL condition}}
    \Leftrightarrow \underbrace{\frac{1}{2}\| \hess f(p)[\grad f(p)] \|^2_p \geq \frac{\alpha^2}{2} \| \grad f(p) \|^2_p}_{\text{Required eigenvalue bound on }\hess f(p)}. 
\end{equation}
Our aim, therefore, is to identify classes of $f$ that satisfy the right hand side of (\ref{eq:eigvalue_bound_Hessian}). We provide three cases where the Riemannian PL condition is naturally satisfied on the Riemannian Hamiltonian $\gH$, which generalize the results in \cite{abernethy2019last} to Riemannian manifolds. These include the cases when the objective $f$ is g-strongly-convex-concave and when $f$ is smooth with sufficient geodesic linearity.

In order to analyze function classes of $f$ that lead to (\ref{eq:eigvalue_bound_Hessian}), we require the following results on the Riemannian Hessian $\hess f(p)$ of the product manifold $\gM$ (which are of independent interest as well).
\begin{itemize}
    \item[1)] Decomposition of the Riemannian Hessian $\hess f(p)$ and adjoint property of the cross derivatives. This is shown in Appendix \ref{app:sec:productHessian}. 
    \item[2)] We establish general lower bounds on the eigenvalue magnitude of the Riemannian Hessian, which we include in Appendix \ref{essential_lemma_appendix}. 
\end{itemize}
The above results help to bound the eigenvalues of $\hess f(p)$ in terms of the spectrum of $\hess_x f(x,y)$, $\hess_y f(x,y)$, and the cross derivatives $\grad_{xy}^2f(x,y), \grad_{yx}^2f(x,y)$. We now present the main results below.

\begin{proposition}[Geodesic strongly convex strongly concave]
\label{g_strongly_convex_concave_prop}
Let $f(x,y)$ be geodesic strongly convex in $x$ and geodesic strongly concave in $y$ with parameter $\mu > 0$. Then, $\gH$ satisfies the Riemannian PL condition (\ref{eq:eigvalue_bound_Hessian}) with $\delta = \mu^2$. 
\end{proposition}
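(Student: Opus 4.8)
The plan is to verify the right-hand inequality of (\ref{eq:eigvalue_bound_Hessian}) directly with $\alpha = \mu$, i.e.\ to show that at every $p = (x,y) \in \M$ each eigenvalue of $\hess f(p)$ has magnitude at least $\mu$; the claimed $\delta = \mu^2$ then follows immediately from the stated equivalence. Since $\hess f(p)$ is self-adjoint, $|\lambda| \ge \mu$ for all eigenvalues is the same as the uniform lower bound $\|\hess f(p)[w]\|_p \ge \mu\|w\|_p$ for all $w \in T_p\M$, and the latter is what I would actually establish.

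First I would invoke the block decomposition of the product-manifold Riemannian Hessian from Appendix~\ref{app:sec:productHessian}: for $w = (u,v) \in T_x\M_x \times T_y\M_y = T_p\M$,
\begin{equation*}
\hess f(p)[w] = \big( \hess_x f(x,y)[u] + \grad_{yx}^2 f(x,y)[v],\ \grad_{xy}^2 f(x,y)[u] + \hess_y f(x,y)[v] \big),
\end{equation*}
together with the adjointness relation $\langle \grad_{yx}^2 f(x,y)[v], u\rangle_x = \langle v, \grad_{xy}^2 f(x,y)[u]\rangle_y$ also from that appendix. Geodesic $\mu$-strong convexity of $f$ in $x$ and $\mu$-strong concavity in $y$ translate (using that geodesic $\mu$-strong convexity of a function on a complete manifold is equivalent to its Riemannian Hessian being $\succeq \mu\,\id$, applied with the other variable held fixed) into $\langle \hess_x f(x,y)[u], u\rangle_x \geq \mu\|u\|_x^2$ and $\langle \hess_y f(x,y)[v], v\rangle_y \leq -\mu\|v\|_y^2$ for all $u \in T_x\M_x$, $v \in T_y\M_y$.

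The key step is then a sign-flip pairing. Given $w = (u,v) \in T_p\M$, set $\tilde w := (u, -v)$, so that $\|\tilde w\|_p = \|w\|_p$. Pairing $\hess f(p)[w]$ against $\tilde w$, the two cross terms cancel by the adjointness relation, leaving
\begin{equation*}
\langle \hess f(p)[w], \tilde w\rangle_p = \langle \hess_x f(x,y)[u], u\rangle_x - \langle \hess_y f(x,y)[v], v\rangle_y \geq \mu\|u\|_x^2 + \mu\|v\|_y^2 = \mu\|w\|_p^2 .
\end{equation*}
Combining this with Cauchy--Schwarz, $\mu\|w\|_p^2 \leq \langle \hess f(p)[w], \tilde w\rangle_p \leq \|\hess f(p)[w]\|_p\,\|w\|_p$, so $\|\hess f(p)[w]\|_p \geq \mu\|w\|_p$ for every $w \in T_p\M$; hence $\lambda_{\rm |min|}(\hess f(p)) \geq \mu$, which is exactly the eigenvalue bound on the right of (\ref{eq:eigvalue_bound_Hessian}) with $\alpha = \mu$. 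I expect the only delicate ingredients to be the ones deferred to the appendix — the precise form of the product Hessian and the adjointness of the cross derivatives (these follow because the Levi-Civita connection of a product is the product connection, but they need care), plus the standard equivalence between geodesic $\mu$-strong convexity and $\hess \succeq \mu\,\id$ — after which the core argument reduces to the one-line sign-flip computation above.
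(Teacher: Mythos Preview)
Your proof is correct and uses the same appendix ingredients as the paper (the block form of the product Hessian, Proposition~\ref{riem_hess_prop}, and the adjointness of the cross derivatives, Proposition~\ref{symmetry_cross_derivative_prop}), but the core argument is genuinely different. The paper argues by contradiction: it assumes an eigenpair $(\lambda,\xi)$ with $|\lambda|<\mu$, writes out the two eigenvector equations, takes inner products, uses adjointness to eliminate the cross term between the two equations, and then observes that the resulting identity forces a quantity that is $\le 0$ (from $\hess_y f \preceq -\mu\,\id$ and $\lambda>-\mu$) to equal one that is $\ge 0$ (from $\hess_x f \succeq \mu\,\id$ and $\lambda<\mu$), contradicting $\xi\neq 0$. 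Your route is direct: the sign-flip test vector $\tilde w=(u,-v)$ makes the cross terms cancel in one stroke, so the pairing $\langle \hess f(p)[w],\tilde w\rangle_p$ reduces to the block-diagonal part and immediately gives the operator lower bound $\|\hess f(p)[w]\|_p\ge\mu\|w\|_p$ via Cauchy--Schwarz. Your argument is shorter and yields the uniform bound for all tangent vectors at once rather than only for eigenvectors; the paper's eigenvector-based contradiction is closer in spirit to the more delicate block-spectrum arguments used later in Lemmas~\ref{lemma_hess_bound_linear} and~\ref{lemma_sufficient_linear_H}, where no such simple test vector is available.
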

\begin{proof}

We show that if there exists an eigenpair $(\lambda, \xi)$ of $\hess f(p)$ such that $|\lambda| < \mu$ with $p = (x,y), \xi = (u,v)$, then it leads to a contradiction. From the expression of the Riemannian Hessian in Proposition \ref{riem_hess_prop}, we have
\begin{align*}
     \hess_x f(x,y)[u] + \grad^2_{yx}f(x,y)[v] &= \lambda u   \\
     \hess_y f(x,y)[v] + \grad^2_{xy}f(x,y)[u] &= \lambda v.
\end{align*}
This can be equivalently written as
\begin{align}
    \langle \hess_x f(x,y)[u], u \rangle_x + \langle \grad^2_{yx}f(x,y)[v], u\rangle_x &= \lambda \| u\|^2_x \label{strong_cc_eq1}\\
    \langle \hess_y f(x,y)[v], v \rangle_y + \langle \grad^2_{xy}f(x,y)[u], v\rangle_y &= \lambda \| v\|^2_y \label{strong_cc_eq2}.
\end{align}
From \eqref{strong_cc_eq1}, we obtain 
\begin{equation}
    \langle \grad^2_{yx} f(x,y)[v], u\rangle_x =  -\langle u, (\hess_x f(x,y) - \lambda \, \id )[u] \rangle_x, \label{strong_cc_eq3}
\end{equation}
where $\id$ is the identity operator. From the symmetry of the Riemannian cross derivatives (Proposition \ref{symmetry_cross_derivative_prop}), we can substitute \eqref{strong_cc_eq3} into \eqref{strong_cc_eq2}, which gives
\begin{equation}
    \langle \hess_y f(x,y)[v], v\rangle_y -\langle u, (\hess_x f(x,y) - \lambda \, \id )[u] \rangle_x = \lambda \| v\|^2_y. \label{strong_cc_eq4}
\end{equation}
The geodesic strong convexity in $x$ and geodesic strong concavity in $y$ leads to $\hess_x f(x,y) \succeq \mu \,\id$ and $\hess_y f(x,y) \preceq -\mu \,\id$ respectively. Thus, the LHS of \eqref{strong_cc_eq4} is smaller than $-\mu$, which contradicts $|\lambda| < \mu$. Thus, all eigenvalues of $\hess f(p)$ satisfies $|\lambda| \geq \mu$. 
\end{proof}

\begin{proposition}[Smooth and geodesic linear]
\label{nonconvex_g_linear_prop}
Let $\sigma_{\min}(\grad^2_{xy} f(x,y) ) \allowbreak \geq \tau > 0$ and let $f(x,y)$ be geodesic linear in one variable and has $L_1$-Lipschitz Riemannian gradient in another variable. Then, $\gH$ satisfies the Riemannian PL condition (\ref{eq:eigvalue_bound_Hessian}) with $\delta = \frac{\tau^4}{2\tau^2 +L_1^2}$.
\end{proposition}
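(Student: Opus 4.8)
The plan is to establish the eigenvalue bound on $\hess f(p)$ appearing on the right-hand side of \eqref{eq:eigvalue_bound_Hessian}, following the template of the proof of Proposition~\ref{g_strongly_convex_concave_prop}. Without loss of generality I take $f$ to be geodesic linear in $y$ and to have $L_1$-Lipschitz Riemannian gradient in $x$; the reversed case is symmetric. Geodesic linearity in $y$ means $t \mapsto f(x,\gamma(t))$ is affine along every geodesic $\gamma$ of $\M_y$, hence $\langle \hess_y f(x,y)[w], w\rangle_y = 0$ for all $w \in T_y\M_y$ (this inner product equals the second $t$-derivative of $f(x,\gamma(t))$ along a geodesic with initial velocity $w$); as $\hess_y f(x,y)$ is self-adjoint, this forces $\hess_y f(x,y) \equiv 0$. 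The Lipschitz gradient assumption in $x$ gives $\|\hess_x f(x,y)\|_x \le L_1$ by Lemma~\ref{lemma_smoothness_grad}.

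Next I would take an arbitrary eigenpair $(\lambda,\xi)$ of $\hess f(p)$, write $p = (x,y)$ and $\xi = (u,v)$, and use the product-Hessian decomposition (Proposition~\ref{riem_hess_prop}) together with $\hess_y f = 0$ to get $\hess_x f(x,y)[u] + \grad_{yx}^2 f(x,y)[v] = \lambda u$ and $\grad_{xy}^2 f(x,y)[u] = \lambda v$. The target is $\lambda^2 \ge \delta = \tau^4/(2\tau^2 + L_1^2)$, and since $\delta < \tau^2$ it suffices to treat $\lambda^2 < \tau^2$. From the second equation and $\sigma_{\min}(\grad_{xy}^2 f) \ge \tau$ we obtain $\tau^2\|u\|_x^2 \le \|\grad_{xy}^2 f[u]\|_y^2 = \lambda^2\|v\|_y^2$, so $\|u\|_x^2 \le (\lambda^2/\tau^2)\|v\|_y^2$; in particular $v \ne 0$ whenever $\xi \ne 0$. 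Pairing the first equation with $u$ and using the adjoint relation $\grad_{yx}^2 f = (\grad_{xy}^2 f)^*$ (Proposition~\ref{symmetry_cross_derivative_prop}) to substitute $\langle \grad_{yx}^2 f[v], u\rangle_x = \langle v, \grad_{xy}^2 f[u]\rangle_y = \lambda\|v\|_y^2$, one gets $\langle \hess_x f[u], u\rangle_x = \lambda(\|u\|_x^2 - \|v\|_y^2)$ and hence $|\lambda|\,(\|v\|_y^2 - \|u\|_x^2) \le L_1\|u\|_x^2$.

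The decisive step is to combine these two estimates. Substituting $\|u\|_x^2 \le (\lambda^2/\tau^2)\|v\|_y^2$ on both sides of the last inequality and dividing by $\|v\|_y^2 > 0$ gives $|\lambda|\,(1 - \lambda^2/\tau^2) \le L_1\lambda^2/\tau^2$; dividing by $|\lambda|$ (the case $\lambda = 0$ is handled separately) and multiplying by $\tau^2$ produces $\tau^2 - \lambda^2 \le L_1|\lambda|$, and squaring these nonnegative quantities yields $(\tau^2 - \lambda^2)^2 \le L_1^2\lambda^2$, i.e. $\tau^4 \le (2\tau^2 + L_1^2)\lambda^2 - \lambda^4 \le (2\tau^2 + L_1^2)\lambda^2$, so $\lambda^2 \ge \delta$. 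For $\lambda = 0$ the second eigen-equation and $\sigma_{\min}(\grad_{xy}^2 f) > 0$ force $u = 0$, and then the first reads $\grad_{yx}^2 f[v] = 0$; since $\grad_{yx}^2 f = (\grad_{xy}^2 f)^*$ has trivial kernel (e.g.\ when $\dim\M_x = \dim\M_y$, the regime in which the singular-value hypothesis is non-degenerate), $v = 0$, contradicting $\xi \ne 0$. Thus every eigenvalue of $\hess f(p)$ satisfies $|\lambda| \ge \sqrt{\delta}$, and \eqref{eq:eigvalue_bound_Hessian} delivers the Riemannian PL condition with parameter $\delta$.

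I expect the main obstacle to be the combination step, which must be carried out using only operator-level facts — self-adjointness of $\hess_x f$ and the adjoint/singular-value relations for the cross derivatives — since the Jacobian-matrix manipulations used in the Euclidean analysis of \cite{abernethy2019last} are unavailable, all while bookkeeping the two distinct metrics $\langle\cdot,\cdot\rangle_x$ and $\langle\cdot,\cdot\rangle_y$ correctly; a secondary technical point is the $\lambda = 0$ case, which genuinely needs $\grad_{xy}^2 f$ to have no cokernel.
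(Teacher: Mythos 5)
Your proof is correct, and it takes a genuinely different (and more elementary) route than the paper's. The paper disposes of this proposition in two lines by reducing $\hess_y f(x,y)=0$ from geodesic linearity and then invoking Lemma~\ref{lemma_hess_bound_linear}; that lemma does the real work by taking an eigenpair of the \emph{squared} operator $\hess f(p)\circ\hess f(p)$, solving one coupled operator equation for $v$ via the inverse of $\grad^2_{xy}f\circ\grad^2_{yx}f-\delta\,\id$, substituting into the other, and ruling out a nontrivial solution through an explicit analysis of the roots of a quadratic. You instead work directly with an eigenpair $(\lambda,\xi)$ of $\hess f(p)$ itself, in the style of the paper's Proposition~\ref{g_strongly_convex_concave_prop}, and collapse everything to three scalar inequalities in $|\lambda|$, $\|u\|_x$, $\|v\|_y$ (the singular-value lower bound, the adjointness substitution, and the operator-norm bound on $\hess_x f$), which combine to give exactly $(\tau^2-\lambda^2)^2\le L_1^2\lambda^2$ and hence the same threshold $\delta=\tau^4/(2\tau^2+L_1^2)$. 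Your route avoids the operator-inversion machinery entirely and is shorter; the paper's lemma is stated more generally (with $\|\hess_x f\|_x$ rather than $L_1$) and is reused nowhere else, so nothing is lost. One point worth noting: the $\lambda=0$ case you isolate is a real issue in \emph{both} arguments. Your conclusion $v=0$ needs $\grad^2_{yx}f(x,y)$ to be injective, and the paper's invertibility claim for $\grad^2_{xy}f\circ\grad^2_{yx}f-\delta\,\id$ needs the same thing; together with $\sigma_{\min}(\grad^2_{xy}f)\ge\tau>0$ this silently forces $\dim\M_x=\dim\M_y$. You flag this explicitly where the paper does not, which is to your credit; when the dimensions differ the eigenvalue route breaks (the product Hessian acquires a kernel), even though the PL inequality itself can survive because $\grad_y f$ then lies in the range of $\grad^2_{xy}f$, orthogonal to that kernel.
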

\begin{proof}
Without loss of generality, we assume $f(x,y)$ has $L_1$-Lipschitz gradient in $x$ and geodesic linear in $y$. The geodesic linearity in $y$ implies that $\hess_y f(x,y) = 0$, and therefore, we can apply Lemma \ref{lemma_hess_bound_linear}, which shows 
$$\lambda_{\rm |min|}^2(\hess f(p)) \geq \frac{\sigma_{\min}^4(\grad^2_{xy} f(x,y))}{2\sigma_{\min}^2(\grad^2_{xy} f(x,y)) + \| \hess_x f(x,y) \|^2_x}.$$ 
Also, from Lemma \ref{lemma_smoothness_grad}, we have $\| \hess_x f(x,y) \|_x^2 \leq L_1^2$. Finally, from the assumption $\sigma_{\min}(\grad^2_{xy} f(x,y)) \allowbreak \geq \tau$, the proof is complete. 
\end{proof}

\begin{proposition}[Smooth and sufficiently geodesic-bilinear]
\label{sufficient_bilinear_prop}
Let $0 < \tau \leq \sigma(\grad^2_{xy} f(x,y) ) \leq \Upsilon$ and let $f(x,y)$ has $L_1$-Lipschitz Riemannian gradient for both $x$ and $y.$ Define $\mu = \lambda_{\rm |min|}(\hess_x f(x,y))$, $\rho = \lambda_{\rm |min|}(\hess_y f(x,y))$ and let the sufficient geodesic-bilinearity condition holds: $(\tau^2 + \mu^2)(\tau^2 + \rho^2) - 4 L_1^2 \Upsilon^2 > 0$. Then, $\gH$ satisfies the Riemannian PL condition (\ref{eq:eigvalue_bound_Hessian}) with $\delta = \frac{(\tau^2 + \mu^2)(\tau^2 + \rho^2) - 4 L_1^2 \Upsilon^2}{2 \tau^2 + \rho^2 + \mu^2}$.
%
%
\end{proposition}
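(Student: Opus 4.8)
The plan is to reduce, via the equivalence recorded in \eqref{eq:eigvalue_bound_Hessian}, to a spectral statement about the product-manifold Hessian: it suffices to show that every eigenvalue $\lambda$ of $\hess f(p)$ at $p=(x,y)$ satisfies $\lambda^2\ge\delta$, since then $\tfrac12\|\hess f(p)[\grad f(p)]\|_p^2\ge\tfrac{\delta}{2}\|\grad f(p)\|_p^2$, which is exactly the Riemannian PL condition with constant $\delta$. As in the proof of Proposition \ref{g_strongly_convex_concave_prop}, I would argue by contradiction. Suppose $(\lambda,\xi)$ is an eigenpair with $\xi=(u,v)$ and $\lambda^2<\delta$. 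Using the block decomposition of the Riemannian Hessian on $\M=\M_x\times\M_y$ (Proposition \ref{riem_hess_prop}) together with the adjointness of the Riemannian cross derivatives (Proposition \ref{symmetry_cross_derivative_prop}), the eigenpair equation becomes the coupled system
\begin{align*}
    \hess_x f(x,y)[u] + \grad^2_{yx}f(x,y)[v] &= \lambda u,\\
    \grad^2_{xy}f(x,y)[u] + \hess_y f(x,y)[v] &= \lambda v ,
\end{align*}
where $\grad^2_{yx}f(x,y)$ is the adjoint of $\grad^2_{xy}f(x,y)$.

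From here the strategy is to eliminate $u$ and $v$ and collapse the system to a scalar quadratic inequality in $\lambda^2$. Concretely, I would pair the two equations with $u$, with $v$, and with the images $\grad^2_{yx}f(x,y)[v]$ and $\grad^2_{xy}f(x,y)[u]$, and combine these with the norm identities obtained by squaring each equation, so that the indefinite operators $\hess_x f$ and $\hess_y f$ enter only through controlled scalars. Substituting the hypotheses $\tau\le\sigma(\grad^2_{xy}f(x,y))\le\Upsilon$, the bounds $\|\hess_x f(x,y)\|_x,\|\hess_y f(x,y)\|_y\le L_1$ (Lemma \ref{lemma_smoothness_grad}, since $f$ has $L_1$-Lipschitz Riemannian gradient in both variables), $\mu=\lambda_{\rm |min|}(\hess_x f(x,y))$, and $\rho=\lambda_{\rm |min|}(\hess_y f(x,y))$, the accumulated inequalities should reduce to $\lambda^2\,(2\tau^2+\rho^2+\mu^2)\ge(\tau^2+\mu^2)(\tau^2+\rho^2)-4L_1^2\Upsilon^2$, i.e.\ $\lambda^2\ge\delta$, contradicting $\lambda^2<\delta$. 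This is precisely the general lower bound on the magnitude of the eigenvalues of a block-structured Riemannian Hessian established in Appendix \ref{essential_lemma_appendix}, of which Lemma \ref{lemma_hess_bound_linear} used in Proposition \ref{nonconvex_g_linear_prop} is the degenerate case $\hess_y f=0$; the proof of the present proposition then amounts to invoking that bound and plugging in the stated quantities. The sufficient geodesic-bilinearity assumption $(\tau^2+\mu^2)(\tau^2+\rho^2)-4L_1^2\Upsilon^2>0$ is exactly what makes $\delta>0$, so the PL constant is meaningful; note that this also disposes of the case $\lambda=0$, which would otherwise force $0\ge\delta$.

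The main obstacle, relative to Proposition \ref{g_strongly_convex_concave_prop}, is that $\hess_x f$ and $\hess_y f$ are no longer sign-definite; only their eigenvalue magnitudes are controlled by $\mu$ and $\rho$. Consequently the sign of the coupling term $\langle\grad^2_{xy}f(x,y)[u],v\rangle_y$ cannot be pinned down, and a naive substitution leaves cross terms of indeterminate sign. Extracting a clean bound requires the right weighted combination of the scalar relations so that these indefinite contributions appear only through $\mu^2,\rho^2,L_1^2,\tau^2,\Upsilon^2$, followed by a fairly delicate algebraic simplification to recognize the outcome as the stated $\delta$ (and, along the way, to check the degenerate sub-cases where $u$ or $v$ vanishes, which are immediately contradicted by $\sigma_{\min}(\grad^2_{xy}f(x,y))\ge\tau>0$). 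Carrying out that simplification is where the real work lies; the spectral reduction above is routine.
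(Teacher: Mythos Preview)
Your high-level reduction is exactly what the paper does: invoke the general spectral lower bound on the block Riemannian Hessian from Appendix \ref{essential_lemma_appendix} (specifically Lemma \ref{lemma_sufficient_linear_H}) and substitute $\sigma_{\min}(\grad^2_{xy}f)\ge\tau$, $\sigma_{\max}(\grad^2_{xy}f)\le\Upsilon$, $\|\hess_x f\|_x,\|\hess_y f\|_y\le L_1$ (via Lemma \ref{lemma_smoothness_grad}), and the definitions of $\mu,\rho$, which turns the lemma's $a,b$ into $a=2\tau^2+\mu^2+\rho^2$ and $b=(\tau^2+\mu^2)(\tau^2+\rho^2)-4L_1^2\Upsilon^2$; then $\lambda_{\rm |min|}^2(\hess f(p))\ge b/a=\delta$, and \eqref{eq:eigvalue_bound_Hessian} finishes.

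Where your sketch diverges from the paper is in the mechanism you outline for obtaining that spectral bound. You propose to work directly with an eigenpair $(\lambda,\xi)$ of $\hess f(p)$ and manufacture scalar relations by pairing the two block equations with $u$, $v$, and the cross-derivative images, hoping the indefinite pieces cancel in some weighted combination. The paper's Lemma \ref{lemma_sufficient_linear_H} instead passes to the \emph{squared} operator $\hess f(p)\circ\hess f(p)$, writes its eigenpair system, and then performs a Schur-complement elimination: under the hypothesis $\delta<b/a$ the block $T_y=H_y\circ H_y+B_{xy}\circ B_{yx}-\delta\,\id$ is shown invertible, $v$ is eliminated in favor of $u$, and the resulting operator acting on $u$ is proved strictly positive, forcing $u=0$ and a contradiction. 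Working at the level of the squared operator is what makes the indefinite $H_x,H_y$ enter only through $H_x\circ H_x$ and $H_y\circ H_y$ (hence through $\mu^2,\rho^2,L_1^2$) without any sign ambiguity; your pairing route leaves the mixed term $\langle H_x[u],B_{yx}[v]\rangle_x$ exposed, and you yourself flag that ``carrying out that simplification is where the real work lies.'' So the proposal is correct as a proof of the proposition, but the internal argument you sketch for the lemma is not the one the paper uses, and it is not clear your version would close with the same constant without effectively reproducing the squared-operator elimination.
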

\begin{proof}
We can directly apply Lemma \ref{lemma_sufficient_linear_H} and set $a = 2 \tau^2 + \rho^2 + \mu^2$ and $b = (\tau^2 + \mu^2)(\tau^2 + \rho^2) - 4 L_1^2 \Upsilon^2 >0$ by assumption. 
\end{proof}

It is worth noticing that the sufficient geodesic-bilinearity condition 
in Proposition \ref{sufficient_bilinear_prop} can be interpreted as requiring a sufficiently large weight on the geodesic-bilinear component in the objective function $f$. To see this, suppose $f(x,y) = c_l f_0(x,y) + f_1(x) + f_2(y)$ where $f_0$ is geodesic linear in each $x$ and $y$ (i.e. bilinear) with the weight $c_l > 0$ and $f_1, f_2$ have $L_1$-Lipschitz Riemannian gradient. Because by definition, Riemannian Hessian of a geodesic linear function is zero, $f$ has $2L_1$-Lipschitz Riemannian gradient (by Lemma \ref{lemma_smoothness_grad}). Let $\tau_0, \Upsilon_0$ be the minimum and maximum singular values of $\grad^2_{xy} f_0(x,y)$. Then, $\tau = c_l \tau_0, \Upsilon = c_l \Upsilon_0$. The sufficient geodesic bilinearity condition is satisfied for $c_l \geq 4L_1 \Upsilon_0/ \tau_0^2$. This is because $(\tau^2 + \mu^2)(\tau^2 + \rho^2) > \tau^4 = c_l^4 \tau_0^4 \geq 16 L_1^2 c_l^2 \Upsilon_0^2 = 16L_1^2 \Upsilon^2$. 

\begin{remark}
When $f_1(x) = f_2(y) = 0$, it should be noted that $f(x,y) = c_l f_0(x,y)$ is geodesic bilinear. Additionally, $\gH$ satisfies the Riemannian PL condition with $\delta = \frac{c_l^2\tau_0^2}{2}$. 
\end{remark}

{
\subsection{The geodesic-bilinear example}
\label{motivating_example_sect}

\begin{figure}[t]
    \centering
    \subfloat[Iteration]{\includegraphics[scale=0.28]{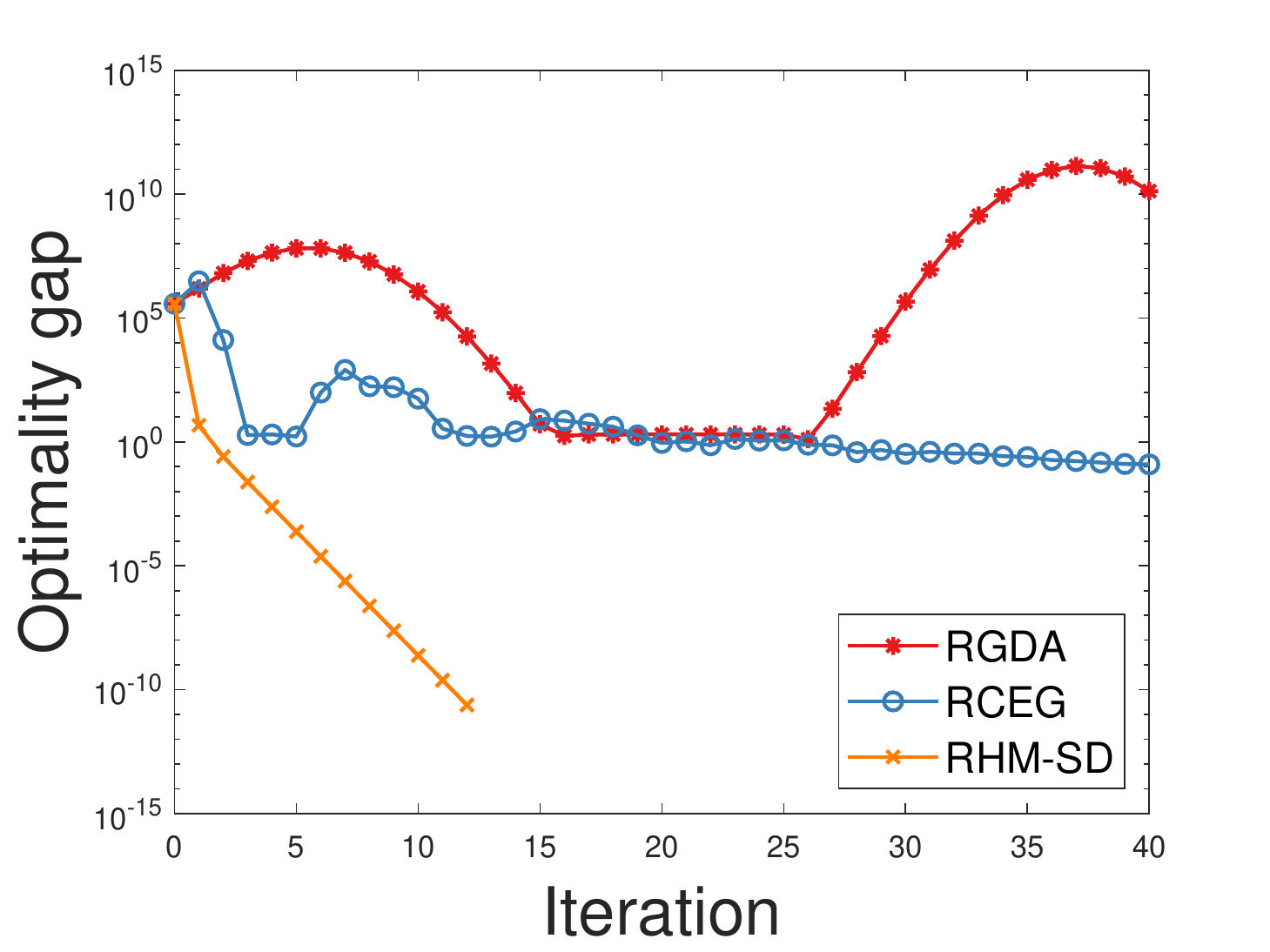}  \label{illu_iter}}
    \subfloat[Time]{\includegraphics[scale=0.28]{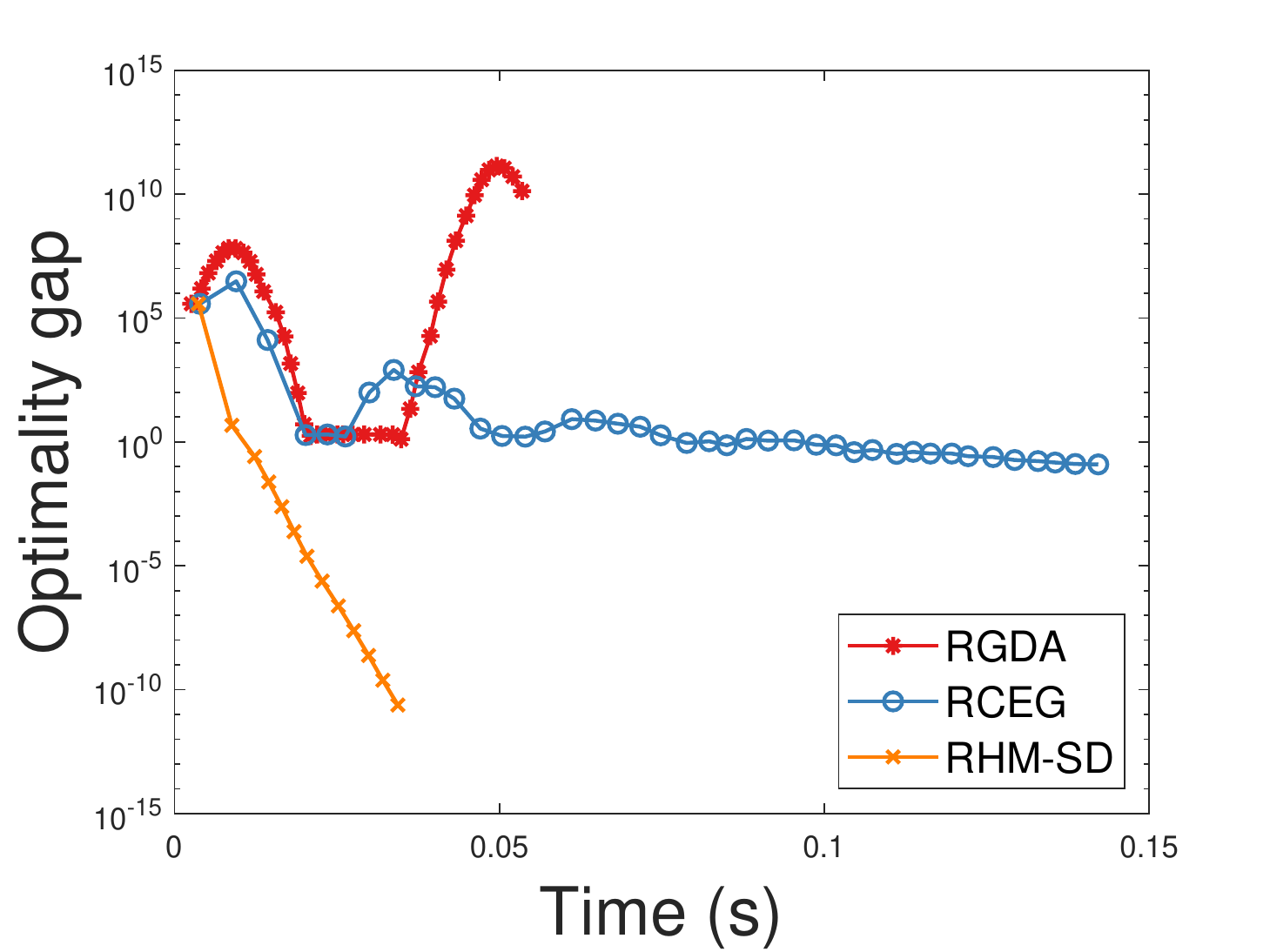}  \label{illu_time}} 
    \subfloat[Iteration (zoom out)]{\includegraphics[scale=0.28]{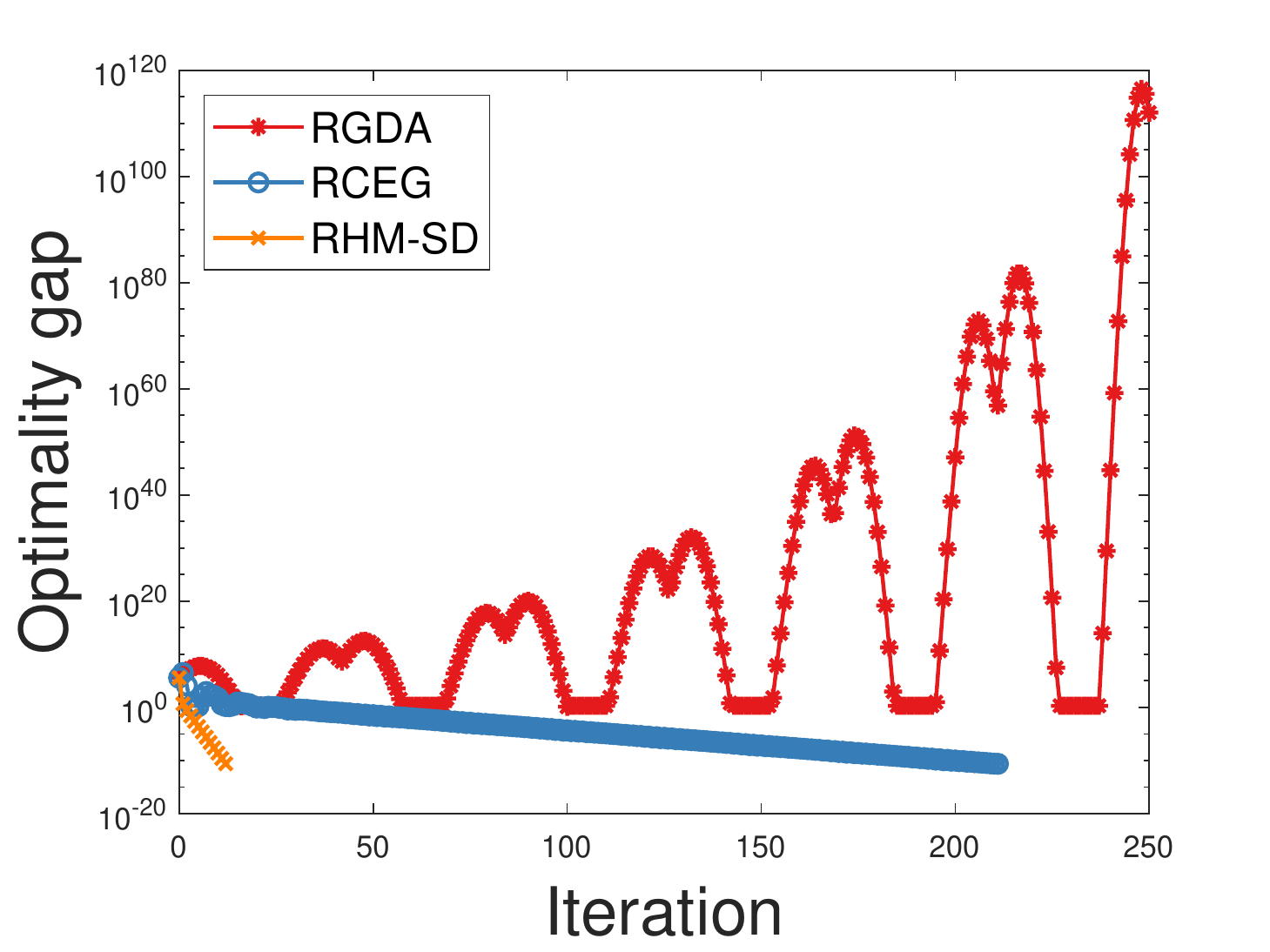}  \label{illu_iter_more}} 
\caption{{RGDA \cite{huang2020gradient} fails to converge on the geodesic bilinear problem $f(\bX, \bY) = \log\det(\bX) \log\det(\bY)$. In particular, RGDA suffers from cyclic behaviour. RCEG \cite{zhang2022minimax,jordan2022first} converges very slowly. In contrast to RGDA and RCEG, the proposed Riemannian gradient descent method on the Riemannian Hamiltonian function (RHM-SD) quickly converges on such challenging bilinear problems. Notably, the proposed RHM-SD achieves an optimality gap lower than $10^{-10}$ in just $12$ iterations while RCEG takes $220$ iterations.}}
\label{fig_illu_bilinear}
\end{figure}

Here, we give a motivating example to show how the Riemannian Hamiltonian approach achieves convergence to global saddle points. To this end, we consider the problem $f(\bX, \bY) = \log\det(\bX) \log\det(\bY)$ where $\bX, \bY \in \sS_{++}^d$, the set of $d \times d$ symmetric positive definite (SPD) matrices. When endowed with the affine-invariant metric \cite{bhatia2009positive}, i.e., $\langle \bU, \bV \rangle_\bX = \trace(\bX^{-1} \bU \bX^{-1} \bV)$ for any $\bU,\bV \in T_\bX \sS_{++}^d$, the set becomes a Riemannian manifold. Under this metric, one can show that the function is g-bilinear, i.e., geodesic linear in both $\bX, \bY$, but not g-strongly-convex-concave (Proposition \ref{g_convex_concave_prop}). However, the Riemannian Hamiltonian of the objective, i.e., $\gH(\bX, \bY) = \frac{1}{2} \big(\| \grad_\bX f(\bX, \bY) \|^2_{\bX} + \|\grad_\bY f(\bX, \bY)  \|^2_\bY \big)$ satisfies the Riemannian PL condition (Proposition \ref{PL_logdet_bilinear_quadratic}). This suggests that the vanilla Riemannian gradient descent method for minimizing the Riemannian Hamiltonian (\ref{MainRiemHamiltonian}) converges to the global saddle points of $f$.} {In Appendix \ref{geodesic_quad_bilinear_appendix}, we show that the geodesic-bilinear function $f(\bX, \bY)$ does not satisfy the min-max Riemannian PL condition on the original problem. This further justifies the merit of the proposed Hamiltonian proxy problem \eqref{MainRiemHamiltonian} of $f(\bX, \bY)$ that satisfies the Riemannian PL condition.}

{
On the other hand, the RGDA algorithm \cite{huang2020gradient} follows the negative of the min-max Riemannian gradient of $f$. Specifically, let $\M = \sS_{++}^d \times \sS_{++}^d$ and $P = (\bX, \bY) \in \M$ be the product manifold and its elements. The min-max gradient of $f$ is derived as $G(P) = \big( \grad_\bX f(\bX, \bY), -\grad_\bY f(\bX, \bY) \big) = ( \bX \log\det(\bY), - \bY \log\det(\bX) )$. We compare this expression with the gradient of the Riemannian Hamiltonian, which is $\grad \gH(P) = \big( \bX \log\det(\bX), \bY \log\det(\bY) \big)$. We observe that $\langle G(P), \grad \gH(P) \rangle_P = 0$, which implies that the min-max gradient of $f$ is always orthogonal to the gradient of its Riemannian Hamiltonian. In fact, such orthogonality holds for any g-bilinear objective (see Proposition \ref{orthogo_g_bilinear} in Appendix \ref{geodesic_quad_bilinear_appendix}). Given that the negative Hamiltonian gradient $-\grad \gH(P)$ points to the global saddle points, the orthogonality of its direction implies that RGDA provably cycles around the saddle points. For the RCEG algorithm \cite{zhang2022minimax,jordan2022first}, since it also makes use of the RGDA-style updates, we expect its slow convergence for g-bilinear problems.




We illustrate the above findings in Figure~\ref{fig_illu_bilinear}, where we compare the Riemannian Hamiltonian  steepest descent method (RHM-SD) against both RGDA \cite{huang2020gradient} and RCEG \cite{zhang2022minimax,jordan2022first} with properly tuned stepsize. The convergence is measured in optimality gap, i.e., $|\det(\bX) - 1| + |\det(\bY) - 1|$ given that the global saddle points of $f$ satisfy $\det(\bX^*) = \det(\bY^*) = 1$ (Proposition \ref{PL_logdet_bilinear_quadratic}). 
We observe that the proposed RHM-SD takes only $12$ iterations to obtain an optimality gap of lower than $10^{-10}$ for this challenging setup. 
However, RGDA experiences cyclic behaviour, which matches our analysis above. While RCEG incorporates a correction step for RGDA-style updates to address the cycling issue, it still exhibits a slight cyclic behavior during the initial phase but eventually converges (Figure~\ref{fig_illu_bilinear}(c)). Overall, RCEG takes $220$ iterations for the same optimality gap. 
From Figures~\ref{fig_illu_bilinear}(a) and~\ref{fig_illu_bilinear}(b), we also observe that per iteration runtime cost of RHM-SD is similar to RCEG. 

More details and discussions can be found in Section \ref{geodesic_quadratic_sect}, where we generalize the findings to include quadratic terms.
}

\section{Riemannian Hamiltonian consensus method}
\label{RHM_con_sect}
In the Euclidean setting, \cite{mescheder2017numerics} proposes the consensus method for solving min-max problems in the Euclidean space. The consensus method has also been viewed as a perturbation of the Euclidean Hamiltonian method \cite{abernethy2019last}. In this section, we propose an extension of RHM with steepest descent update, namely the Riemannian Hamiltonian consensus method (RHM-CON), by combining the Hamiltonian gradient direction with the min-max gradient direction. {In practice, particularly for some deep learning applications, Assumption \ref{stationary_assumption} may not satisfy. Thus solving the Hamiltonian proxy problem \eqref{MainRiemHamiltonian} may lead to undesired stationary points that are not saddle points. The consensus direction provides a regularization and is usually practically favourable for general nonconvex-nonconcave min-max problem. We show such an example in Section \ref{orthogonal_GAN_sect}.
}

The update of RHM-CON is given by
\begin{equation*}
    p_{t+1} = {\rm Exp}_{p_t} \big(- \eta_t \, \zeta(p_t) \big) =  {\rm Exp}_{p_t} \Big(- \eta_t \, \big( \gamma \, v(p_t)  +  \grad \gH(p_t) \big) \Big),
\end{equation*}
with $\gamma \geq 0$ and $v(p_t) := \big( \grad_x f(x_t,y_t), - \grad_y f(x_t, y_t) \big)$ is the min-max gradient 
When $\gamma = 0$, this reduces to RHM-SD. The RHM-CON method is formalized in Algorithm \ref{RHGDCO}. Below, we provide the convergence result for RHM-CON.

\begin{algorithm}[t]
 \caption{Riemannian Hamiltonian consensus (RHM-CON) method}
 \label{RHGDCO}
 \begin{algorithmic}[1]
  \STATE \textbf{Input:} Stepsize $\eta$ and regularization parameter $\gamma$.
  \STATE Initialize $p_0 = (x_0,y_0) \in \M$.
  \FOR{$t = 0,...,T$}
  \STATE Compute the min-max gradient $v(p_t) = \big( \grad_x f(x_t,y_t), - \grad_y f(x_t, y_t) \big)$. 
  \STATE Compute the update direction $\zeta(p_t) = \gamma \, v(p_t) +  \hess f(p_t)[\grad f(p_t)]$. 
  \STATE Update $p_{t+1} = {\rm Exp}_{p_t}\big(-\eta_t \, \zeta(p_t) \big)$.
  \ENDFOR
  \STATE \textbf{Output:} $p_T$.
 \end{algorithmic} 
\end{algorithm}

\begin{theorem}[Linear convergence of RHM-CON]
\label{linear_convergence_PL_general}
Under Assumption \ref{smoothness_assumption} with $L = L_0L_2 + L_1^2$, suppose that the Riemannian Hamiltonian $\gH$ satisfies the PL condition. Let $c > 0$ such that $\| \zeta(p_t) \|^2 = \| \gamma \, v(p_t) + \grad \gH(p_t) \|^2 \geq c \| \grad \gH(p_t) \|^2$ 
for all the iterates $p_t$. Set $\gamma < \sqrt{\delta}$, $\eta_t = \eta \leq \frac{1}{L}$, then Algorithm \ref{RHGDCO} converges with
\begin{equation*}
    \| \grad f(p_t)\|^2_{p_t} \leq \Big(1 - \nu \Big)^t \| \grad f(p_0) \|^2_{p_0},
\end{equation*}
where $\nu =  (c\delta + \delta - \gamma^2) \eta - L c \delta \eta^2 > 0$.
\end{theorem}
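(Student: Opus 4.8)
The plan is to mimic the proof of Theorem~\ref{theorem_convergence_RHGD}, but now account for the extra consensus term $\gamma\, v(p_t)$ in the update direction $\zeta(p_t)$. First I would invoke the $L$-smoothness of $\gH$ (Proposition~\ref{prop_ham_smooth}, via Lemma~\ref{lemma_smoothness_grad}) applied to the step $p_{t+1} = {\rm Exp}_{p_t}(-\eta\,\zeta(p_t))$, which gives
\begin{equation*}
    \gH(p_{t+1}) - \gH(p_t) \leq -\eta\, \langle \grad\gH(p_t), \zeta(p_t)\rangle_{p_t} + \frac{L\eta^2}{2}\|\zeta(p_t)\|^2_{p_t}.
\end{equation*}
The key new ingredient is to control the cross term $\langle \grad\gH(p_t), \zeta(p_t)\rangle_{p_t} = \langle \grad\gH(p_t), \gamma\, v(p_t) + \grad\gH(p_t)\rangle_{p_t} = \|\grad\gH(p_t)\|^2_{p_t} + \gamma\,\langle \grad\gH(p_t), v(p_t)\rangle_{p_t}$ from below. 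For the inner product $\langle \grad\gH(p_t), v(p_t)\rangle_{p_t}$, I would use Young's / Cauchy--Schwarz, $\gamma\,\langle \grad\gH(p_t), v(p_t)\rangle_{p_t} \geq -\frac{1}{2}\|\grad\gH(p_t)\|^2_{p_t} - \frac{\gamma^2}{2}\|v(p_t)\|^2_{p_t}$, and then note that $\|v(p_t)\|^2_{p_t} = \|\grad f(p_t)\|^2_{p_t} = 2\gH(p_t)$ exactly (the sign flip on $\grad_y f$ does not change the norm). This turns the first-order decrease into something lower-bounded by $\|\grad\gH(p_t)\|^2_{p_t}$ terms and a $-\gamma^2\gH(p_t)$ correction.

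Next I would apply the Riemannian PL condition $\frac12\|\grad\gH(p_t)\|^2_{p_t} \geq \delta\,\gH(p_t)$ in two places: once on the $\|\grad\gH(p_t)\|^2_{p_t}$ contribution coming from the linear term (keeping the $+\frac12\|\grad\gH\|^2$ after the Young step) to produce a $+2\delta\eta\,\gH(p_t)$-type term, and once more on the lower bound $\|\zeta(p_t)\|^2_{p_t} \geq c\|\grad\gH(p_t)\|^2_{p_t} \geq 2c\delta\,\gH(p_t)$ — except that last bound enters with the \emph{wrong} sign (it is a negative contribution from the quadratic term $-\frac{L\eta^2}{2}\|\zeta\|^2$), so I must instead \emph{upper} bound $\|\zeta(p_t)\|^2_{p_t}$; the cleanest route is to keep $-\frac{L\eta^2}{2}\|\zeta\|^2_{p_t}$ and combine it with the $-\eta\langle\grad\gH,\zeta\rangle_{p_t}$ term before lower-bounding. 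Concretely, after the Young step the decrease reads (schematically) $\gH(p_{t+1})-\gH(p_t) \leq -\frac{\eta}{2}\|\grad\gH(p_t)\|^2_{p_t} + \frac{\gamma^2\eta}{2}\|\grad f(p_t)\|^2_{p_t} - \frac{L\eta^2}{2}\|\zeta(p_t)\|^2_{p_t}$; I would write $-\frac{\eta}{2}\|\grad\gH\|^2 = -\eta(1-\frac{L\eta}{2}\cdot\frac{\|\zeta\|^2}{\|\grad\gH\|^2}\cdot\ldots)$ — better, use $\eta\le 1/L$ so that $\frac{L\eta^2}{2}\|\zeta\|^2 \leq \frac{\eta}{2}\|\zeta\|^2$, but this still has the wrong sign. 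So the actual bookkeeping: write $-\eta\langle\grad\gH,\zeta\rangle + \frac{L\eta^2}{2}\|\zeta\|^2 \leq -\eta\|\grad\gH\|^2 - \eta\gamma\langle\grad\gH,v\rangle + \frac{L\eta^2}{2}\|\zeta\|^2$ and then handle $\|\zeta\|^2 = \|\grad\gH\|^2 + 2\gamma\langle\grad\gH,v\rangle + \gamma^2\|v\|^2$ directly, so that the $\langle\grad\gH,v\rangle$ terms combine into $(-\eta + L\eta^2)\gamma\langle\grad\gH,v\rangle$ which vanishes when $\eta = 1/L$ — this is likely the intended trick, matching the appearance of $L c\delta\eta^2$ in $\nu$. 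Collecting coefficients and applying the PL bound and the $c$-bound gives $\gH(p_{t+1}) \leq (1-\nu)\gH(p_t)$ with $\nu = (c\delta + \delta - \gamma^2)\eta - Lc\delta\eta^2$; iterating and using $\gH(p_t) = \frac12\|\grad f(p_t)\|^2_{p_t}$ finishes the proof.

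The main obstacle will be the careful sign and coefficient bookkeeping in combining the three quadratic quantities $\|\grad\gH\|^2$, $\langle\grad\gH, v\rangle$, and $\|v\|^2 = 2\gH$, so that the cross terms $\langle\grad\gH,v\rangle$ of indefinite sign are eliminated (using the $\eta = 1/L$ or $\eta \le 1/L$ choice together with $\|\zeta\|^2 = \|\grad\gH + \gamma v\|^2$) rather than merely bounded — bounding them naively with Young's inequality would spend part of the $\delta$ budget and would not reproduce the stated $\nu$. I would also need to verify $\nu > 0$ under the hypotheses $\gamma < \sqrt{\delta}$ and $\eta \le 1/L$: at $\eta = 1/L$ one gets $\nu = \frac{c\delta + \delta - \gamma^2}{L} - \frac{c\delta}{L} = \frac{\delta - \gamma^2}{L} > 0$, and for smaller $\eta$ the quadratic $\nu(\eta)$ stays positive on $(0, 1/L]$ since it is concave in $\eta$ and positive at both $\eta\to 0^+$ and $\eta = 1/L$. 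Finally I would remark that the constant $c$ is guaranteed to exist (e.g. $c$ can be taken close to $1$ when $\gamma$ is small, since $\|\gamma v + \grad\gH\|^2 \geq (1-\gamma\kappa)^2\|\grad\gH\|^2$ fails in general but $c$ is simply assumed here as a hypothesis), so no existence argument is needed.
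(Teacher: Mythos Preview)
Your setup (the smoothness bound, the identity $\|v(p_t)\|_{p_t}^2 = 2\gH(p_t)$, and the final verification that $\nu>0$) is fine, but the central bookkeeping does not go through as you describe, and neither of your two routes produces the stated $\nu$ with its $c$-dependence for general $\eta\le 1/L$. Your Young's-inequality bound on $\gamma\langle\grad\gH,v\rangle$ leaves the smoothness term $+\frac{L\eta^2}{2}\|\zeta\|^2$ with a \emph{positive} coefficient, so the hypothesis $\|\zeta\|^2\ge c\|\grad\gH\|^2$ (a lower bound) points the wrong way; you notice this, but your subsequent ``schematic'' line has the sign of the $\|\zeta\|^2$ term flipped. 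Your alternative, the direct expansion $\|\zeta\|^2=\|\grad\gH\|^2+2\gamma\langle\grad\gH,v\rangle+\gamma^2\|v\|^2$, eliminates $\|\zeta\|^2$ altogether---so there is nothing left to apply the $c$-bound to---and the surviving cross term $(L\eta^2-\eta)\gamma\langle\grad\gH,v\rangle$ vanishes only at $\eta=1/L$, not for all $\eta\le 1/L$.

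What the paper does instead is apply the polarization identity to $\langle\grad\gH,\zeta\rangle$ itself (not to $\langle\grad\gH,\gamma v\rangle$):
\begin{equation*}
-\eta\langle\grad\gH,\zeta\rangle_{p_t} = -\frac{\eta}{2}\|\grad\gH(p_t)\|^2_{p_t} + \frac{\eta}{2}\|\zeta(p_t)-\grad\gH(p_t)\|^2_{p_t} - \frac{\eta}{2}\|\zeta(p_t)\|^2_{p_t}.
\end{equation*}
This is an equality, so no sign-indefinite cross term survives, and it contributes an extra $-\frac{\eta}{2}\|\zeta\|^2$. Combined with the $+\frac{L\eta^2}{2}\|\zeta\|^2$ from smoothness, the net coefficient of $\|\zeta\|^2$ is $-\frac{\eta}{2}(1-L\eta)\le 0$ for every $\eta\le 1/L$, and now the lower bound $\|\zeta\|^2\ge c\|\grad\gH\|^2$ goes the right way. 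Since $\|\zeta-\grad\gH\|^2=\gamma^2\|v\|^2=2\gamma^2\gH$, one arrives at
\begin{equation*}
\gH(p_{t+1})-\gH(p_t)\le\Big(-\frac{\eta}{2}-\frac{c\eta}{2}+\frac{Lc\eta^2}{2}\Big)\|\grad\gH(p_t)\|^2_{p_t}+\frac{\eta\gamma^2}{2}\|v(p_t)\|^2_{p_t},
\end{equation*}
and a single application of PL (the bracketed coefficient is negative since $\eta\le 1/L<(1+c)/(Lc)$) yields exactly $-\nu\,\gH(p_t)$ with the stated $\nu$. Your Young step is in fact this polarization identity with the nonnegative term $\frac{\eta}{2}\|\zeta\|^2$ discarded, which is precisely why it cannot recover the $c\delta$ part of the rate.
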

\begin{proof}
First, we highlight that 
\begin{equation*}
    \frac{1}{2}\|v(p) \|^2_p = \frac{1}{2} \| \grad f(p) \|^2_{p}  = \gH(p).
\end{equation*}
From the smoothness of Riemannian Hamiltonian (Proposition \ref{prop_ham_smooth}, Lemma \ref{lemma_smoothness_grad}) and the update in Algorithm \ref{RHGDCO}, we have
\begin{align*}
    &\gH(p_{t+1}) - \gH(p_t) \\
    &\leq - \eta \langle \grad \gH(p_t), \zeta(p_t)  \rangle_{p_t} +\frac{\eta^2 L}{2} \| \zeta(p_t) \|_{p_t}^2 \\
    &= -\frac{\eta}{2} \| \grad \gH(p_t) \|^2_{p_t} + \frac{\eta}{2} \| \zeta(p_t) - \grad \gH(p_t) \|^2_{p_t} - \Big( \frac{\eta}{2} - \frac{\eta^2 L}{2} \Big) \| \zeta(p_t) \|^2_{p_t} \\
    &\leq  \Big( - \frac{\eta}{2} - \frac{\eta c}{2} + \frac{\eta^2 L c}{2} \Big) \| \grad \gH(p_t) \|^2_{p_t} + \frac{\eta \gamma^2}{2} \| v(p_t) \|^2_{p_t} \\
    &\leq ( -\eta - \eta c + \eta^2 L c) \delta \gH(p_t) + \eta\gamma^2 \gH(p_t) \\
    &= \Big( L c\delta \eta^2 - c\delta \eta - \delta \eta + \eta \gamma^2 \Big) \gH(p_t),
\end{align*}
where the second inequality follows from $\eta \leq \frac{1}{L}$ (which gives $\frac{\eta}{2} - \frac{\eta^2 L}{2} \geq 0$) and the lower bound on $\| \zeta(p_t)\|^2_{p_t}$. The last inequality uses the PL condition and $\eta \leq \frac{1}{L} < \frac{1+c}{Lc}$, which ensures $-\frac{\eta}{2} - \frac{\eta c}{2} + \frac{\eta^2Lc}{2} < 0$. From the choice of $\eta$ and $\gamma$ as well as the definition of $\nu$, we have $\nu > 0$. This is because $\nu = \eta (c \delta + \delta - \gamma^2 - Lc\delta \eta) \geq \eta (\delta - \gamma^2) > 0$. Thus, $\gH(p_{t+1}) = ( 1- \nu) \gH(p_t)$ ensuring linear convergence. Applying this result recursively completes the proof. 
\end{proof}

From Theorem \ref{linear_convergence_PL_general}, we see that linear convergence is achieved provided that the weight $\gamma$ on min-max gradient direction is sufficiently small. Also, we highlight that a uniform parameter $c > 0$ always exists in a compact set as long as $\gamma v(p_t) \neq - \grad \gH(p_t)$. This can be ensured by choosing a small value for $\gamma$.

\section{Stochastic min-max optimization}
\label{stochastic_hm_sect}
Applications such as domain generalization, robust training, and generative adversarial networks yield a min-max problem with a stochastic function $f$, e.g., with a finite sum structure of the function \cite{loizou2020stochastic}. Under the stochastic setting, the objective function in \eqref{main_riem_minmax} can be expressed as an expectation, i.e.,
\begin{equation*}
    \min_{x \in \M_x} \max_{y \in \M_y} \Big\{f(x,y) = \sE_\omega[f (x,y; \omega)] \Big\},
\end{equation*}
where $\omega \in \gD$ is a random variable following a certain distribution $\gD$. This implies an expectation structure on the Riemannian Hamiltonian as
\begin{equation*}
    \gH(p) = \frac{1}{2} \Big\| \sE_\omega \big[\grad f(p ; \omega) \big] \Big\|^2_p = \frac{1}{2} \sE_{\omega} \sE_{\varphi} \langle \grad f (p; \omega), \grad f (p; \varphi) \rangle_p,
\end{equation*}
for $\omega, \varphi \in \gD$. Modifying Proposition \ref{gradient_hamiltonian_prop} for the stochastic setting leads to 
\begin{align*}
    \grad \gH(p) &= \frac{1}{2}
    \sE_{\omega,\varphi} \Big[  \hess f(p; \omega)[\grad f(p; \varphi)] + \hess f(p; \varphi)[\grad f(p; \omega)] \Big].
\end{align*}
Let $\grad \gH_{\omega,\varphi}(p) := \frac{1}{2} \hess f(p; \omega)[\grad f(p; \varphi)] + \frac{1}{2} \hess f(p; \varphi)[\grad f(p; \omega)]$. We can modify RHM-SD by replacing the gradient of Hamiltonian with its stochastic version (which we call RHM-SGD) as 
\begin{align}
    \grad \gH_{\gS, \gS'}(p_t) &:= \frac{1}{|\gS||\gS'|} \sum_{\omega \in \gS, \varphi \in \gS'} \grad \gH_{\omega,\varphi} (p), \label{stochastic_HG_formula}
\end{align}
where $\gS = \{ \omega_1, ...,\omega_{|\gS|} \}, \gS' = \{ \varphi_1, ..., \varphi_{|\gS'|}\}$ are randomly selected subsets with $\omega_i, \varphi_j \in \gD$. The stochastic Hamiltonian gradient provides an unbiased estimate of the full gradient, i.e., $\sE_{\gS, \gS'} [\grad \gH_{\gS, \gS'}(p)] = \grad \gH (p)$. We now show the convergence result of RHM-SGD.

\begin{theorem}[Convergence of RHM-SGD with fixed and decaying stepsize]
\label{RSHGD_fixed_var}
Let Assumption \ref{smoothness_assumption} hold with $L = L_0 L_2 + L_1^2$, and let the Riemannian Hamiltonian $\gH$ satisfy the PL condition with parameter $\delta$. Assume also that the variance of the stochastic gradient is bounded, i.e., $\sE_{\omega, \varphi} \| \grad \gH_{\omega, \varphi} (p_t) \|^2_{p_t} \leq G$. 
Then, RHM-SGD with fixed stepsize $\eta_t = \eta < \frac{1}{2\delta}$ converges with 
$\sE\| \grad f(p_t)\|^2_{p_t} \leq (1- 2\eta\delta)^t \sE \| \grad \gH (p_0) \|^2_{p_0} + \frac{\eta L G}{4}.$ Also, RHM-SGD with decaying stepsize $\eta_t = \frac{2t+1}{2\delta (t+1)^2}$, converges with $\sE\| \grad f(p_t)\|^2_{p_t} \leq \frac{L G}{2\delta^2 t}$.
\end{theorem}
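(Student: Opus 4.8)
The plan is to run the classical stochastic-gradient argument for PL objectives, namely a descent inequality followed by unbiasedness, the PL condition, and a scalar recursion, all carried out with respect to the exponential map so that everything stays valid on the complete product manifold $\M$. First I would apply the $L$-smoothness of $\gH$ from Proposition~\ref{prop_ham_smooth} (together with Lemma~\ref{lemma_smoothness_grad}) along the RHM-SGD step $\xi_t = -\eta_t\,\grad\gH_{\gS,\gS'}(p_t)$, which is exactly an exponential-map update, to obtain
\[
  \gH(p_{t+1}) \le \gH(p_t) - \eta_t \langle \grad\gH(p_t), \grad\gH_{\gS,\gS'}(p_t)\rangle_{p_t} + \frac{L\eta_t^2}{2}\,\|\grad\gH_{\gS,\gS'}(p_t)\|^2_{p_t}.
\]
Taking the expectation conditioned on $p_t$, the middle term collapses to $-\eta_t\|\grad\gH(p_t)\|^2_{p_t}$ by the unbiasedness $\sE_{\gS,\gS'}[\grad\gH_{\gS,\gS'}(p_t)\mid p_t] = \grad\gH(p_t)$ noted after \eqref{stochastic_HG_formula}, while the last term is controlled by $\frac{L\eta_t^2 G}{2}$: convexity of $\|\cdot\|^2$ gives $\|\grad\gH_{\gS,\gS'}(p_t)\|^2 \le \frac{1}{|\gS||\gS'|}\sum_{\omega\in\gS,\varphi\in\gS'}\|\grad\gH_{\omega,\varphi}(p_t)\|^2$, whose expectation is bounded by $G$ under the second-moment assumption.

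Next I would invoke the Riemannian PL condition for $\gH$ in the form $\|\grad\gH(p_t)\|^2_{p_t} \ge 2\delta\,\gH(p_t)$ (Definition~\ref{riemannian_PL_condition} with $\gH(p^*)=0$), and take total expectations, to arrive at the one-step recursion
\[
  \sE\,\gH(p_{t+1}) \le (1 - 2\eta_t\delta)\,\sE\,\gH(p_t) + \frac{L\eta_t^2 G}{2},
\]
which, since $\gH = \frac{1}{2}\|\grad f\|^2$, is equivalently a recursion for $\sE\|\grad f(p_t)\|^2$. For the constant stepsize $\eta_t = \eta < \frac{1}{2\delta}$ the factor $1-2\eta\delta$ lies in $(0,1)$, so I unroll the recursion and bound the geometric sum $\sum_{k\ge 0}(1-2\eta\delta)^k = \frac{1}{2\eta\delta}$, which produces the exponentially decaying term plus the $O(\eta L G)$ noise floor in the stated bound. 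For the decaying stepsize $\eta_t = \frac{2t+1}{2\delta(t+1)^2}$ I would use the identity $1-2\eta_t\delta = \frac{t^2}{(t+1)^2}$ together with $\eta_t^2 \le \frac{1}{\delta^2(t+1)^2}$ (from $(2t+1)^2 \le 4(t+1)^2$); multiplying the recursion by $(t+1)^2$ turns it into $b_{t+1} \le b_t + \frac{LG}{2\delta^2}$ for $b_t := t^2\,\sE\,\gH(p_t)$, and telescoping from $b_0 = 0$ gives $b_t \le \frac{LG}{2\delta^2}\,t$, i.e. the $O\!\big(LG/(\delta^2 t)\big)$ rate.

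I expect the only real difficulty to be careful bookkeeping rather than anything conceptual. The Riemannian descent inequality applies legitimately here precisely because the RHM-SGD update is an exponential-map step and $\M$ is complete, so Proposition~\ref{prop_ham_smooth} holds without neighbourhood restrictions; the points that need attention are treating the double mini-batch average $\grad\gH_{\gS,\gS'}$ correctly in both the unbiasedness and the second-moment step, verifying that the second-moment bound $\sE_{\omega,\varphi}\|\grad\gH_{\omega,\varphi}(p_t)\|^2_{p_t} \le G$ is intended to hold uniformly along the random iterates (so that total expectation can be taken freely), and tracking the numerical constants through the geometric-series and telescoping steps so that they reproduce exactly the noise floor and rate in the statement.
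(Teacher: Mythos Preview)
Your approach is correct and is precisely the standard SGD-under-PL argument the paper defers to (it cites \cite[Theorem~4]{karimi2016linear} and omits the details): the $L$-smoothness descent lemma via Proposition~\ref{prop_ham_smooth}, unbiasedness, the second-moment bound, and the PL inequality combine into the scalar recursion $\sE\,\gH(p_{t+1}) \le (1-2\eta_t\delta)\,\sE\,\gH(p_t) + \tfrac{L\eta_t^2 G}{2}$, which is then unrolled geometrically for constant $\eta$ and telescoped via $1-2\eta_t\delta = t^2/(t+1)^2$ for the decaying schedule. The only caveat is that the constants you obtain this way differ by harmless factors (e.g.\ a missing $1/\delta$ in the noise floor and a factor of $2$ in the decaying-stepsize rate) from those printed in the statement, which appear to be typos in the paper rather than errors in your argument.
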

\begin{proof}
The proof follows from \cite[Theorem~4]{karimi2016linear} and can be easily adapted to the Riemannian manifold setting, and therefore, is omitted. 
\end{proof}

We can similarly consider the stochastic version of RHM-CON, which we denote as RHM-SCON, with the update step as
\begin{equation*}
    \zeta_{\gS, \gS'}(p_t) = \gamma (v_{\gS}(p_t) + v_{\gS'}(p_t))/2 + \grad \gH_{\gS, \gS'}(p_t),
\end{equation*}
where $v_{\gS}(p_t)$ is the stochastic min-max gradient on sample set $\gS$. Theorem \ref{RSHGD_fixed_var} can be adapted to prove the convergence of RHM-SCON following similar assumptions and analysis in Theorem \ref{linear_convergence_PL_general}.


\section{Convergence under retraction}
\label{retraction_sect}
Existing algorithms for solving \eqref{main_riem_minmax}, such as RCEG \cite{zhang2022minimax}, employs the exponential map to update iterates on the manifolds. However, in many cases, the computational cost of implementing the exponential map for many Riemannian manifolds is prohibitive. An alternative is to consider the more general retraction operation \cite[Chapter~4]{absil2009optimization}. In this section, we show that the use of retraction (instead of the exponential map) in RHM algorithms guarantees similar convergence under an additional mild assumption.

Retraction $R_p: T_p\M \xrightarrow{}\M$ is a map that satisfies for all $p \in \M$, (1) $R_p(0) = p$ (2) $\D R_p (0)[\xi] = \xi$ for all $\xi \in T_p\M$. From the definition, we observe that the exponential map is a special case of retraction. In practice, when an efficient retraction is available, the Hamiltonian gradient update can be performed via retraction, i.e., $p_{t+1} = R_{p_t}(- \eta \, \grad \gH(p_t))$. To analyze the convergence, we make the following additional assumption that bound the differential operator of the retraction map. 

\begin{assumption}
\label{bounded_diffretr_assumption}
There exists constants $\theta_1, \theta_2 > 0$ such that the retraction curve $c(t) := R_p (t \xi)$ with $\| \xi\|_p = 1$ satisfies $\| c'(t) \|_{c(t)}  \leq \theta_1$ and $\| c''(t) \|_{c(t)} \leq \theta_2$ for all $t$ where $c(t) \in \gU$, where $\gU$ is a compact subset of $\M$.
\end{assumption}

This assumption is always satisfied for a compact manifold $\gM$. The compactness appears to be necessary for retraction-based analysis for first-order algorithms \cite{han2021improved,sato2019riemannian,kasai2018riemannian,boumal2020introduction}. We remark that for the case of the exponential map, the retraction curve coincides with the geodesic curve. Then, $\theta_1 = 1$ because $\| c'(t) \|_{c(t)} = \| \Gamma_p^{c(t)} \xi \|_{c(t)} = 1$ by isometric property of parallel transport. Also, $\theta_2 = 0$ from the definition of the geodesic.

\begin{proposition}
\label{prop_smoothness_retraction}
Under Assumptions~\ref{smoothness_assumption} and \ref{bounded_diffretr_assumption}, the Riemannian Hamiltonian $\gH$ is retraction $L_R$-smooth with $L_R = \theta_1^2 L  + \theta_2 L_1 L_0$, i.e., for any $p \in \M$, $q = R_p(\xi) \in \gU$, we have $\gH(q) \leq \gH(p) + \langle \grad\gH(p), \xi \rangle_p + \frac{L_R}{2} \| \xi \|^2_p$.
\end{proposition}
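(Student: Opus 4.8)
The plan is to reduce the claim to a one-dimensional Taylor expansion along the retraction curve, exactly as in the geodesic case (Proposition \ref{prop_ham_smooth}) but paying the price of the extra terms $\theta_1,\theta_2$ coming from the retraction. Fix $p \in \M$ and $\xi \in T_p\M$ with $q = R_p(\xi) \in \gU$, and set $c(t) := R_p(t\xi)$ for $t \in [0,1]$, assuming (as is implicit in Assumption \ref{bounded_diffretr_assumption}) that $c([0,1]) \subseteq \gU$. Define $g(t) := \gH(c(t))$; then $g(0) = \gH(p)$, $g(1) = \gH(q)$, and since $c'(0) = \D R_p(0)[\xi] = \xi$ by the definition of a retraction, $g'(0) = \langle \grad\gH(p), \xi\rangle_p$. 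The goal is therefore to show $g(1) \le g(0) + g'(0) + \frac{L_R}{2}\|\xi\|_p^2$, which by the integral form of Taylor's theorem, $g(1) = g(0) + g'(0) + \int_0^1 (1-s)\, g''(s)\, ds$, reduces to the pointwise bound $|g''(s)| \le L_R \|\xi\|_p^2$ on $[0,1]$.

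Next I would compute $g''$ using the Levi--Civita connection $\bnabla$ along $c$: differentiating $g'(t) = \langle \grad\gH(c(t)), c'(t)\rangle_{c(t)}$ and using metric compatibility together with $\bnabla_{c'}\grad\gH = \hess\gH[c']$ gives
\begin{equation*}
g''(t) = \langle \hess\gH(c(t))[c'(t)], c'(t)\rangle_{c(t)} + \langle \grad\gH(c(t)), c''(t)\rangle_{c(t)},
\end{equation*}
where $c''(t)$ denotes the covariant acceleration $\frac{\D}{dt}c'(t)$. The two terms are controlled by four bounds: (i) $\|\hess\gH(c(t))\|_{c(t)} \le L$ with $L = L_0L_2 + L_1^2$, which follows from Proposition \ref{prop_ham_smooth} together with Lemma \ref{lemma_smoothness_grad} (a Lipschitz Riemannian gradient is equivalent to a bounded Hessian); (ii) $\|\grad\gH(c(t))\|_{c(t)} = \|\hess f(c(t))[\grad f(c(t))]\|_{c(t)} \le \|\hess f(c(t))\|_{c(t)}\,\|\grad f(c(t))\|_{c(t)} \le L_1 L_0$, using Proposition \ref{gradient_hamiltonian_prop} and Assumption \ref{smoothness_assumption}; (iii) $\|c'(t)\|_{c(t)} \le \theta_1 \|\xi\|_p$ and (iv) $\|c''(t)\|_{c(t)} \le \theta_2\|\xi\|_p^2$, both obtained from Assumption \ref{bounded_diffretr_assumption} after rescaling to a unit initial velocity, i.e. writing $c(t) = \hat c(t\|\xi\|_p)$ with $\hat c(s) = R_p(s\,\xi/\|\xi\|_p)$, so that $c'(t) = \|\xi\|_p\, \hat c'(t\|\xi\|_p)$ and $c''(t) = \|\xi\|_p^2\, \hat c''(t\|\xi\|_p)$. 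Combining via Cauchy--Schwarz, $|g''(t)| \le L\theta_1^2\|\xi\|_p^2 + L_1L_0\theta_2\|\xi\|_p^2 = L_R\|\xi\|_p^2$, and substituting into the Taylor remainder finishes the proof.

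The main obstacle is not any single estimate but making the reduction rigorous: one must ensure the whole retraction segment $c([0,1])$ lies in the compact set $\gU$ on which Assumption \ref{bounded_diffretr_assumption} is in force (otherwise the bounds on $c',c''$ are unavailable), and one must use that the rescaling identity $c(t) = \hat c(t\|\xi\|_p)$ holds precisely because $R_p$ is applied to the straight ray $t\mapsto t\xi$ in $T_p\M$. A minor additional point is the implicit smoothness of $\gH$: $g''$ exists because $f$, and hence $\grad f$, $\hess f$ and $\gH$, is assumed smooth enough, as already invoked in Proposition \ref{gradient_hamiltonian_prop}. Everything else is a routine application of the operator-norm bounds already established.
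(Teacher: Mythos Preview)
Your proposal is correct and follows essentially the same approach as the paper: both compute the second derivative of $\gH\circ c$ along the retraction curve via the Hessian term plus the covariant-acceleration term, bound these using Assumptions~\ref{smoothness_assumption} and~\ref{bounded_diffretr_assumption} together with the Lipschitz bound $L=L_0L_2+L_1^2$ from Proposition~\ref{prop_ham_smooth}, and conclude by a one-dimensional Taylor expansion. The only cosmetic differences are that the paper normalizes $\xi$ first and applies Taylor with the Lagrange remainder, whereas you keep the general $\xi$, rescale inside the estimate, and use the integral remainder; these are equivalent.
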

\begin{proof}
For any retraction curve $c(t) = R_p(t\xi)$ with $\| \xi \|_p = 1$ and $t \geq 0$ such that $c(t) \in \gU$, we obtain
\begin{align}
    \frac{d^2}{dt^2} \gH(c(t)) &= \langle \hess \gH(c(t))[c'(t)] , c'(t)  \rangle_{c(t)} + \langle \grad\gH(c(t)), c''(t) \rangle_{c(t)} \nonumber\\
    &\leq L \theta_1^2 + \theta_2 \| \hess f(c(t)) [\grad f(c(t))] \|_{c(t)} \nonumber\\
    &\leq L \theta_1^2 + \theta_2 L_1 L_0 = L_R, \label{smooth_retr_eq1}
\end{align}
where the second inequality applies the gradient of Hamiltonian is $L$-Lipschitz (Proposition \ref{prop_ham_smooth}, Lemma \ref{lemma_smoothness_grad}) and Assumption \ref{bounded_diffretr_assumption}. The last inequality follows from Assumption \ref{smoothness_assumption}. The proof from \eqref{smooth_retr_eq1} to $L_R$-smoothness of $\gH$ is due to \cite[Lemma~3.2]{huang2015riemannian}, which we include here for completeness. 

For any $\xi \in T_p\M$ such that $R_p(\xi) \in \gU$, let $\alpha = \| \xi\|_p$, $\zeta = \xi/\| \xi\|_p$ and hence $\xi = \alpha \zeta$ with $\| \zeta\|_p =1$. Applying Taylor's Theorem on $\gH \circ R_p$ gives
\begin{align*}
    \gH(R_p(\xi)) - \gH(p) &= \gH(R_p(\alpha \zeta)) - \gH(R_p(0)) \\
    &= \alpha \frac{d}{dt} \gH(R_p(t \zeta)) \Big \vert_{t = 0} + \frac{\alpha^2}{2} \frac{d^2}{dt^2} \gH (R_p(t\zeta)) \Big\vert_{t = \tilde{t}} \\
    &\leq \alpha \langle \grad  \gH(p), \zeta \rangle_{p} + \frac{\alpha^2 L_R}{2} \\
    &= \langle \grad \gH(p), \xi\rangle + \frac{L_R}{2} \| \xi \|^2_p,
\end{align*}
where $\tilde{t} \in [0, \alpha]$. Thus, the proof is complete.
\end{proof}

Using Proposition \ref{prop_smoothness_retraction}, we show below that RHM-SD attains a linear convergence rate with retraction. 

\begin{theorem}[Linear convergence of RHM-SD under retraction]
Under same settings as in Theorem \ref{theorem_convergence_RHGD}, suppose Assumption \ref{bounded_diffretr_assumption} holds, and the iterates stay in the compact set $\gU$. Then, RHM-SD with retraction and $\eta = 1/L_R$ converges with $\| \grad f(p_t)\|^2_{p_t} \leq (1 - \frac{\delta}{L_R})^t \| \grad f(p_0) \|^2_{p_0}$.
\end{theorem}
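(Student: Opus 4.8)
The plan is to mirror the fixed-stepsize argument of Theorem~\ref{theorem_convergence_RHGD}, replacing the exponential-map smoothness of $\gH$ (Proposition~\ref{prop_ham_smooth}) with the retraction smoothness from Proposition~\ref{prop_smoothness_retraction}. Since the iterates are assumed to remain in the compact set $\gU$ on which Assumption~\ref{bounded_diffretr_assumption} holds, Proposition~\ref{prop_smoothness_retraction} is applicable at every step; this is the only point where compactness enters.

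First I would write the RHM-SD update under retraction as $p_{t+1} = R_{p_t}(-\eta\,\grad\gH(p_t))$ and apply the retraction $L_R$-smoothness inequality of Proposition~\ref{prop_smoothness_retraction} with $\xi = -\eta\,\grad\gH(p_t)$, giving
\begin{align*}
\gH(p_{t+1}) \;\le\; \gH(p_t) - \eta\,\| \grad\gH(p_t)\|_{p_t}^2 + \frac{\eta^2 L_R}{2}\| \grad\gH(p_t)\|_{p_t}^2 .
\end{align*}
Substituting $\eta = 1/L_R$ collapses the right-hand side to $\gH(p_t) - \tfrac{1}{2L_R}\| \grad\gH(p_t)\|_{p_t}^2$. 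Next I would invoke the Riemannian PL condition on $\gH$, i.e.\ $\tfrac12\| \grad\gH(p_t)\|_{p_t}^2 \ge \delta\,\gH(p_t)$ with $\gH(p^*) = 0$, to obtain the one-step contraction
\begin{align*}
\gH(p_{t+1}) \;\le\; \Big(1 - \frac{\delta}{L_R}\Big)\,\gH(p_t),
\end{align*}
where $\delta \le L_R$ so the contraction factor lies in $[0,1)$. Iterating this bound yields $\gH(p_t) \le (1-\delta/L_R)^t\,\gH(p_0)$, and recalling $\gH(p) = \tfrac12\|\grad f(p)\|_p^2$ gives the claimed rate $\| \grad f(p_t)\|_{p_t}^2 \le (1-\delta/L_R)^t \|\grad f(p_0)\|_{p_0}^2$.

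The only real obstacle is conceptual rather than computational: one must ensure Proposition~\ref{prop_smoothness_retraction} genuinely holds along the entire trajectory, since the constant $L_R = \theta_1^2 L + \theta_2 L_1 L_0$ depends on the curve bounds $\theta_1,\theta_2$ that are only guaranteed on $\gU$. This is precisely why the hypothesis that the iterates stay in $\gU$ is imposed; granting it, every remaining step is a verbatim repetition of the proof of Theorem~\ref{theorem_convergence_RHGD}.
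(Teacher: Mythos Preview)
Your proposal is correct and is precisely the argument the paper intends: the paper itself writes that the proof ``is similar to the proof of Theorem~\ref{theorem_convergence_RHGD} and is omitted,'' and your write-up carries out exactly that substitution of Proposition~\ref{prop_smoothness_retraction} for Proposition~\ref{prop_ham_smooth} in the one-step descent inequality, followed by the PL condition and recursion. Your remark on why the iterates must remain in $\gU$ is also the right justification for invoking the retraction smoothness constant $L_R$ uniformly along the trajectory.
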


The proof is similar to the proof of Theorem \ref{theorem_convergence_RHGD} and is omitted. A
similar analysis with the retraction operation can be performed for other variants of RHM including RHM-CON, RHM-SGD, and RHM-SCON.

\section{Experiments}
\label{application_sect}
In this section, we discuss empirical performance of the proposed Riemannian Hamiltonian methods for various min-max optimization problems on manifolds. The algorithms are implemented in Matlab using the Manopt package \cite{boumal2014manopt} except for Section \ref{robust_training_sect}, \ref{orthogonal_GAN_sect} where we use Pytorch with the Geoopt package \cite{kochurov2020geoopt}. We highlight that there exist many other manifold optimization packages, such as ROPTLIB \cite{huang2018roptlib}, Manopt.jl \cite{bergmann2022manopt}, Pymanopt \cite{townsend2016pymanopt}, McTorch \cite{meghwanshi2018mctorch}, and RiemOpt \cite{smirnov2021tensorflow}, where RHM can also be implemented efficiently. 
We use the following acronyms for the various RHM algorithms considered in this section.
\begin{itemize}
    \item RHM-SD-F: RHM with steepest descent direction with fixed stepsize.
    \item RHM-SD: RHM with steepest descent direction with backtracking line search.
    \item RHM-CON: RHM consensus method with fixed stepsize (Section \ref{RHM_con_sect}).
    \item RHM-CG: RHM with the conjugate gradient method.
    \item RHM-TR: RHM with the trust-region method where we use Hessian approximation with finite differentiation \cite{boumal2015riemannian}. 
    \item RHM-SGD: RHM with stochastic gradient (Section \ref{stochastic_hm_sect}).
    \item RHM-SCON: RHM with stochastic consensus method (Section \ref{stochastic_hm_sect}).
\end{itemize}

We compare the proposed Riemannian Hamiltonian methods with the Riemannian gradient descent ascent (RGDA) \cite{huang2020gradient} and the Riemannian corrected extra-gradient (RCEG) \cite{zhang2022minimax}. As discussed previously, RGDA has not been studied and analyzed for solving the general min-max problem (\ref{main_riem_minmax}), but when $\M_y$ is a convex subset of the Euclidean space \cite{huang2020gradient}. In our experiments, however, we extend RGDA to solve (\ref{main_riem_minmax}).

For all the experiments, we implement the algorithms with exponential map for comparability with RCEG, except for the applications of subspace robust Wasserstein distance (Section \ref{prwd_sect}), robust training (Section \ref{robust_training_sect}) and generative adversarial networks (Section \ref{orthogonal_GAN_sect}) where we implement with retraction map because the manifolds considered do not have a well-defined logarithm map. Hence, for these applications, RCEG is excluded for comparison. In robust training and generative adversarial network experiments, we also test stochastic algorithms for RGDA and RHM. The codes are available at \url{https://github.com/andyjm3}.

\subsection{Geodesic quadratic bilinear optimization}
\label{geodesic_quadratic_sect}
The first example we consider is 
\begin{equation}
    f(\bX, \bY) = c_q (\log\det(\bX))^2 + c_l \log\det(\bX) \log\det(\bY) - c_q (\log\det(\bY))^2, \label{g_bilinear_quadratic}
\end{equation}
where $\bX, \bY \in \sS_{++}^d$, the set of $d \times d$ symmetric positive definite (SPD) matrices. The weights $c_l, c_q \geq 0$ control the balance between the linear and quadratic terms. 

For $\bX \in \sS_{++}^d$, the tangent space $T_\bX \sS_{++}^d$ is the set of symmetric matrices. When endowed with the affine-invariant (AI) metric, i.e., $\langle \bU, \bV \rangle_\bX = \trace(\bX^{-1} \bU \bX^{-1} \bV)$, for any $\bU, \bV \in T_\bX \sS_{++}^d$, one can derive the geodesic, exponential map, and other Riemannian optimization ingredients \cite{han2021riemannian,bhatia2009positive,pennec2020manifold}. We include the expressions in Appendix \ref{app:sec:matrix_manifolds}. Here, we use $\M_{\rm SPD}$ to represent the SPD manifold with the AI metric. It is worth noticing that the function (\ref{g_bilinear_quadratic}) is nonconvex-nonconcave in the Euclidean space (with details included in Appendix \ref{geodesic_quad_bilinear_appendix}).

However, the log-det function is geodesic linear on SPD manifold with the AI metric \cite{sra2015conic} and we show in the following proposition that $f(\bX, \bY)$ is g-convex-concave, although not necessarily g-strongly-convex-concave. 

\begin{proposition}
\label{g_convex_concave_prop}
The function \eqref{g_bilinear_quadratic} is g-convex-concave on $\M_{\rm SPD}$ but not g-strongly-convex-concave.
\end{proposition}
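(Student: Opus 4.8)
The plan is to establish the two claims separately: first that $f(\bX,\bY)$ in \eqref{g_bilinear_quadratic} is g-convex in $\bX$ and g-concave in $\bY$, and second that it fails to be g-strongly-convex-concave. Both reduce to understanding how $\log\det$ behaves along geodesics of $\M_{\rm SPD}$. Recall that under the affine-invariant metric, the geodesic from $\bA$ with initial velocity $\bU \in T_\bA \sS_{++}^d$ is $\gamma(t) = \bA^{1/2} \exp\!\big(t \, \bA^{-1/2} \bU \bA^{-1/2}\big) \bA^{1/2}$. A direct computation gives $\log\det(\gamma(t)) = \log\det(\bA) + t \, \trace\!\big(\bA^{-1/2}\bU\bA^{-1/2}\big)$, so $\log\det$ restricted to any geodesic is an affine function of $t$; equivalently $\tfrac{d^2}{dt^2}\log\det(\gamma(t)) = 0$. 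This is the known fact that $\log\det$ is geodesic linear on $\M_{\rm SPD}$, and it is the single computational ingredient I would verify explicitly (or cite \cite{sra2015conic}).

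Next I would fix $\bY$ and look at $\phi_\bY(\bX) := c_q (\log\det \bX)^2 + c_l \log\det(\bX)\log\det(\bY)$. Along any geodesic $\gamma(t)$ in the $\bX$-variable, write $u(t) := \log\det(\gamma(t)) = a + bt$ with $b = \trace(\bX^{-1/2}\bU\bX^{-1/2})$. Then $\phi_\bY(\gamma(t)) = c_q (a+bt)^2 + c_l (a+bt)\log\det(\bY)$, whose second derivative in $t$ is $2 c_q b^2 \geq 0$. Hence $\phi_\bY$ is g-convex for every fixed $\bY$, i.e., $f$ is g-convex in $\bX$. The symmetric computation for the $\bY$-variable, using the $-c_q(\log\det \bY)^2$ term, gives second derivative $-2c_q b'^2 \leq 0$, so $f$ is g-concave in $\bY$. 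This establishes the first claim. (Here I am using the criterion from the preliminaries that a twice-differentiable $h$ is g-convex iff $\tfrac{d^2}{dt^2} h(\gamma(t)) \geq 0$ along all geodesics, and the analogous statement for g-strong convexity with a uniform positive lower bound.)

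For the second claim — failure of g-strong-convexity-concavity — the obstruction is visible from the same computation: the second derivative $2 c_q b^2$ is not uniformly bounded below by any $\mu > 0$. Concretely, one can exhibit a geodesic along which $b = 0$: take the initial velocity $\bU$ at $\bX$ with $\trace(\bX^{-1/2}\bU\bX^{-1/2}) = 0$ (any nonzero traceless symmetric matrix conjugated appropriately — such $\bU$ exists since $d \geq 2$ is implicit, and even for general $d$ the traceless symmetric matrices form a nontrivial space when $d \geq 2$; for $d=1$ the manifold is one-dimensional and the function is genuinely degenerate anyway). Along that geodesic $\phi_\bY(\gamma(t))$ is constant, so its second derivative is $0 < \mu$ for any claimed strong-convexity constant $\mu$, a contradiction. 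The same works on the $\bY$ side. I would phrase this as: the geodesic Hessian of $f$ in $\bX$ has a nontrivial kernel (the traceless directions), hence cannot be $\succeq \mu\,\id$.

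The only mild subtlety — the step I would be most careful about — is handling the edge case $d = 1$ and making sure the "traceless direction" argument is stated cleanly for all $d \geq 1$ in whatever generality the paper intends; for $d \ge 2$ it is immediate, and for $d=1$ one notes $c_q(\log\det)^2$ is simply a one-dimensional quadratic in the geodesic parameter whose strong convexity constant still degenerates relative to the $c_l$-coupling when comparing against the joint min-max structure, but more simply the paper's interest is $d\ge 2$. Beyond that, the proof is essentially the one-line fact "$\log\det$ is geodesic affine" plus elementary one-variable calculus, so no serious obstacle is expected; the main thing is to organize the two directions ($\bX$-convexity, $\bY$-concavity) and the two assertions (g-convex-concave yes, g-strongly so no) without redundant computation.
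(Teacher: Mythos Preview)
Your proposal is correct and follows essentially the same route as the paper: both arguments reduce to the geodesic linearity of $\log\det$ on $\M_{\rm SPD}$, compute the second derivative of $(\log\det)^2$ along a geodesic, and observe that this second derivative vanishes along certain nontrivial geodesics. The only cosmetic difference is parameterization---the paper uses the endpoint form $\gamma(t) = \bX_0^{1/2}(\bX_0^{-1/2}\bX_1\bX_0^{-1/2})^t\bX_0^{1/2}$ and obtains $\tfrac{d^2}{dt^2}(\log\det\gamma(t))^2 = 2(\log\det\bX_1 - \log\det\bX_0)^2$, identifying the degeneracy as $\det(\bX_0)=\det(\bX_1)$ with $\bX_0\neq\bX_1$, whereas you use the initial-velocity form and identify it as the traceless direction $\trace(\bX^{-1/2}\bU\bX^{-1/2})=0$; these are the same condition, and your digression on $d=1$ can be safely dropped.
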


We next prove that the Riemannian Hamiltonian $\gH$ of the objective \eqref{g_bilinear_quadratic} satisfies the PL condition, which allows linear convergence of the proposed RHM algorithms. 
\begin{proposition}
\label{PL_logdet_bilinear_quadratic}
The Riemannian Hamiltonian of \eqref{g_bilinear_quadratic} satisfies the PL condition with $\delta=(4c_q^2 + c_l^2)d^2$. A point $(\bX^*, \bY^*)$ is a global saddle point of \eqref{g_bilinear_quadratic} if and only if it satisfies $\det(\bX^*) = \det(\bY^*) = 1$.
\end{proposition}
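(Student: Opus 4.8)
The plan is to use the geodesic linearity of $\log\det$ on $\M_{\rm SPD}$ to reduce the entire statement to an explicit computation, so that the Riemannian PL inequality becomes an exact identity. Under the affine-invariant metric one has $\grad(\log\det)(\bX) = \bX$ (the ingredients of $\M_{\rm SPD}$ are recorded in Appendix~\ref{app:sec:matrix_manifolds}), hence $\|\bX\|_\bX^2 = \trace(\bX^{-1}\bX\bX^{-1}\bX) = d$ and $\|\grad(\log\det)(\bX)\|_\bX^2 = d$. Writing $a := \log\det(\bX)$ and $b := \log\det(\bY)$, the chain rule gives the partial Riemannian gradients $\grad_\bX f(\bX,\bY) = (2c_q a + c_l b)\,\bX$ and $\grad_\bY f(\bX,\bY) = (c_l a - 2c_q b)\,\bY$.

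First I would substitute these into the definition~\eqref{RiemHamiltonian} of the Riemannian Hamiltonian. Using $\|\bX\|_\bX^2 = \|\bY\|_\bY^2 = d$ and expanding the squares, the cross terms $\pm 4 c_q c_l\, ab$ cancel, which is exactly what produces a clean constant:
\[
\gH(\bX,\bY) = \frac{d}{2}\Bigl[(2c_q a + c_l b)^2 + (c_l a - 2c_q b)^2\Bigr] = \frac{d(4c_q^2 + c_l^2)}{2}\bigl(a^2 + b^2\bigr).
\]
In particular $\gH \ge 0$, with equality iff $a = b = 0$, i.e. iff $\det(\bX) = \det(\bY) = 1$.

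Next I would differentiate this expression --- equivalently, invoke Proposition~\ref{gradient_hamiltonian_prop} --- to get $\grad_\bX\gH = d(4c_q^2 + c_l^2)\, a\,\bX$ and $\grad_\bY\gH = d(4c_q^2 + c_l^2)\, b\,\bY$. Taking norms once more with $\|\bX\|_\bX^2 = \|\bY\|_\bY^2 = d$ gives $\frac{1}{2}\|\grad\gH(\bX,\bY)\|^2 = \frac{1}{2} d^3(4c_q^2 + c_l^2)^2(a^2 + b^2) = (4c_q^2 + c_l^2)d^2\,\gH(\bX,\bY)$, i.e. the Riemannian PL condition with $\delta = (4c_q^2 + c_l^2)d^2$. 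For the saddle-point characterization, note that $f$ is g-convex-concave by Proposition~\ref{g_convex_concave_prop}, so Assumption~\ref{stationary_assumption} holds and the global saddle points of~\eqref{g_bilinear_quadratic} coincide with its stationary points, i.e. with the points where $\grad_\bX f = \grad_\bY f = 0$. As $\bX,\bY$ are nonsingular, this is the linear system $2c_q a + c_l b = 0$, $c_l a - 2c_q b = 0$, whose coefficient matrix has determinant $-(4c_q^2 + c_l^2) \neq 0$; hence $a = b = 0$, i.e. $\det(\bX^*) = \det(\bY^*) = 1$, which is precisely the zero set of $\gH$.

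There is no deep obstacle here: the computation is short once the affine-invariant gradient of $\log\det$ is available. The two points needing care are the cancellation of the cross terms in $\gH$ (which is what pins down the exact value of $\delta$) and the implicit non-degeneracy assumption that $c_q$ and $c_l$ are not both zero, under which the above $2\times 2$ system is nonsingular and the "only if" direction of the saddle-point claim goes through.
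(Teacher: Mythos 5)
Your computation of the Hamiltonian and the PL constant is exactly the paper's: both derive $\grad_\bX f=(2c_q a+c_l b)\bX$, $\grad_\bY f=(c_l a-2c_q b)\bY$, observe the cancellation of the cross terms to get $\gH=\tfrac{d(4c_q^2+c_l^2)}{2}(a^2+b^2)$, and then verify $\tfrac12\|\grad\gH\|^2=(4c_q^2+c_l^2)d^2\,\gH$ as an exact identity. Where you diverge is the saddle-point characterization. The paper argues directly: it plugs $\det(\bX^*)=\det(\bY^*)=1$ into $f$ and verifies the two saddle inequalities $f(\bX^*,\bY)=-c_q(\log\det\bY)^2\le 0\le c_q(\log\det\bX)^2=f(\bX,\bY^*)$, which establishes the ``if'' direction self-containedly (the ``only if'' direction is left implicit there). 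You instead route through the equivalence of saddle points and stationary points for g-convex-concave objectives (Proposition~\ref{g_convex_concave_prop} plus the paper's remark that Assumption~\ref{stationary_assumption} then holds, together with the standard first-order necessary condition in the other direction), and then solve the nonsingular $2\times 2$ system in $(a,b)$. Your route buys a clean proof of both directions of the ``if and only if'' at once --- including the ``only if'' that the paper glosses over --- at the cost of leaning on the unproved (though standard) assertion that stationary points of g-convex-concave functions are global saddle points; the paper's direct verification avoids that dependency. Your flag that $c_q,c_l$ must not both vanish is a fair point the paper leaves implicit. Both arguments are correct.
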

In Proposition \ref{PL_logdet_bilinear_quadratic}, we see that there exist a continuum of global saddle points. Consequently, we define an optimality gap criterion as $|\det(\bX) - 1| + |\det(\bY) - 1|$ for a candidate point $(\bX, \bY)$.

\subsection*{Experiment settings and results}
We consider $d = 30$ and discuss results on various combinations of $c_q, c_l$. We compare our RHM with RGDA~\cite{huang2020gradient} and RCEG~\cite{zhang2022minimax}. 
All the choices of stepsize are tunned to reflect the best performance except for RHM-SD, RHM-CG, RHM-TR where the stepsizes are selected adaptively by the algorithms. For RHM-CON, we set $\gamma = 0.5$. Convergence of an algorithm is measured in terms of $\| \grad f(p_t) \|_{p_t}$, which is equivalent to $\sqrt{2 \gH(p_t)}$. This measure of convergence has also been considered in \cite{zhang2022minimax} for min-max problems on manifolds. Algorithms are stopped either when gradient norm falls below $10^{-10}$ or the max iteration has been reached. Results are reported in Fig.~\ref{geodesic_quadratic_plot}. 

From Fig.~\ref{geodesic_quadratic_plot}, we observe rapid convergence of RHM algorithms in all the settings. 
The convergence for RGDA varies across different choices of $c_q, c_l$ where it converges faster when the weight on the quadratic term ($c_q$) is relatively higher and is not able to converge when $c_l$ increases. We also observe convergence for RCEG in all cases but the rate is slower compared to RHM algorithms. In Fig. \ref{disttoopt_g_quadratic}, we further compare the optimality gap where we observe all the proposed RHM algorithms reach below $10^{-10}$ at a faster rate than the baselines. The slopes of RHM-SD-F and RHM-CON are steeper than that of RCEG (indicating better theoretical rates for RHM). Additional results on optimality gap comparisons are in Fig. \ref{geodesic_quad_linear_disttoopt} in Appendix \ref{app:sec:experiments}. Finally, Fig.~\ref{runtime_g_quadratic} shows the runtime performance of various algorithms, with the markers indicating the progress of respective algorithms per iteration. We observe that the per-iteration computational cost of RHM is higher than RGDA. This is because RHM exploits second-order information of $f$ to compute the gradient of $\gH$. Also, we see that RCEG can be costly because it requires evaluation of the exponential map twice and the logarithm map once per iteration.

\begin{figure*}[!t]
    \centering
    \subfloat[\texttt{$c_q = 0, c_l = 1$} ]{\includegraphics[scale=0.29]{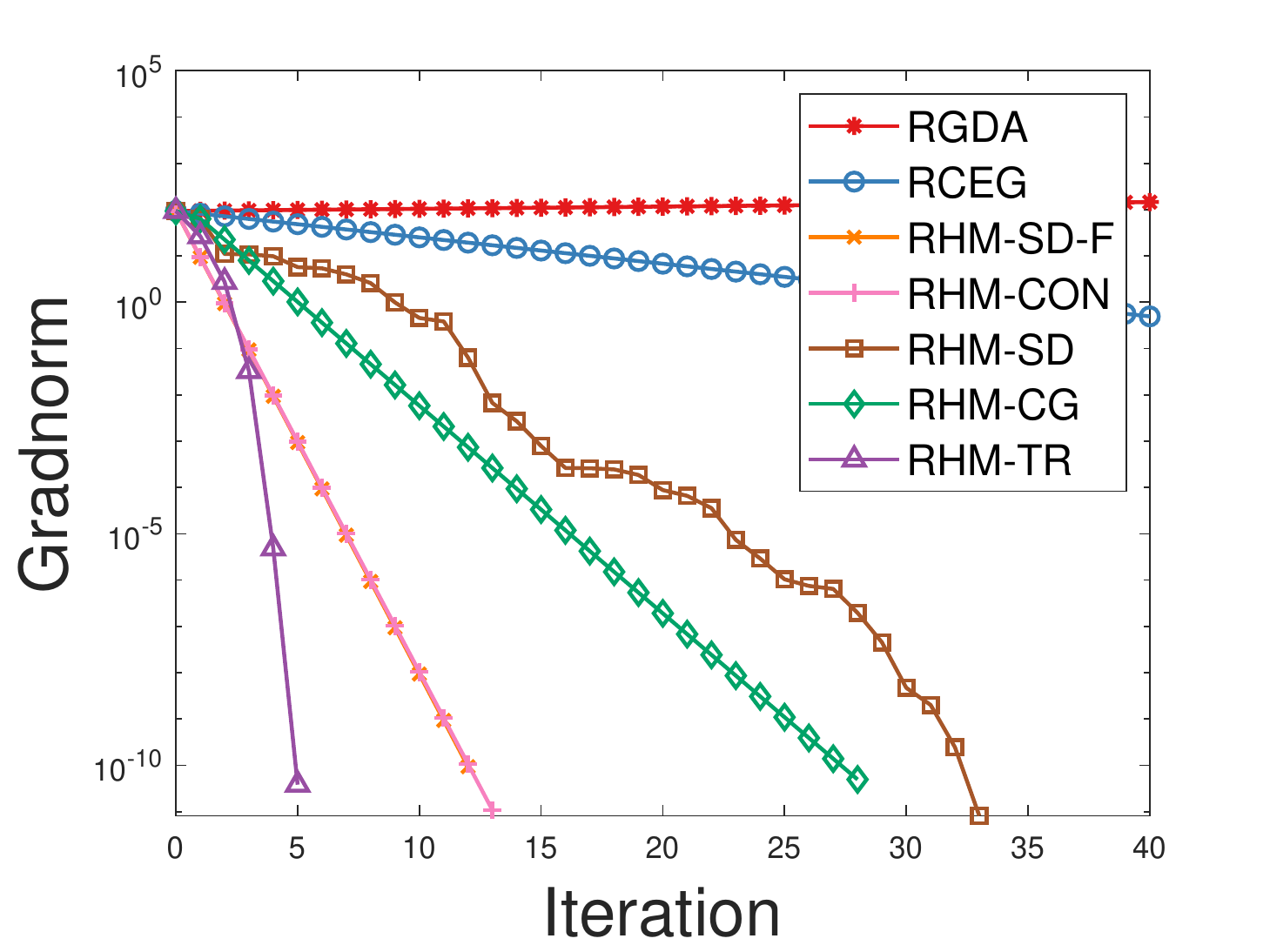}  \label{bilinear_plot}}
    \subfloat[\texttt{$c_q = 1, c_l = 0$} ]{\includegraphics[scale=0.29]{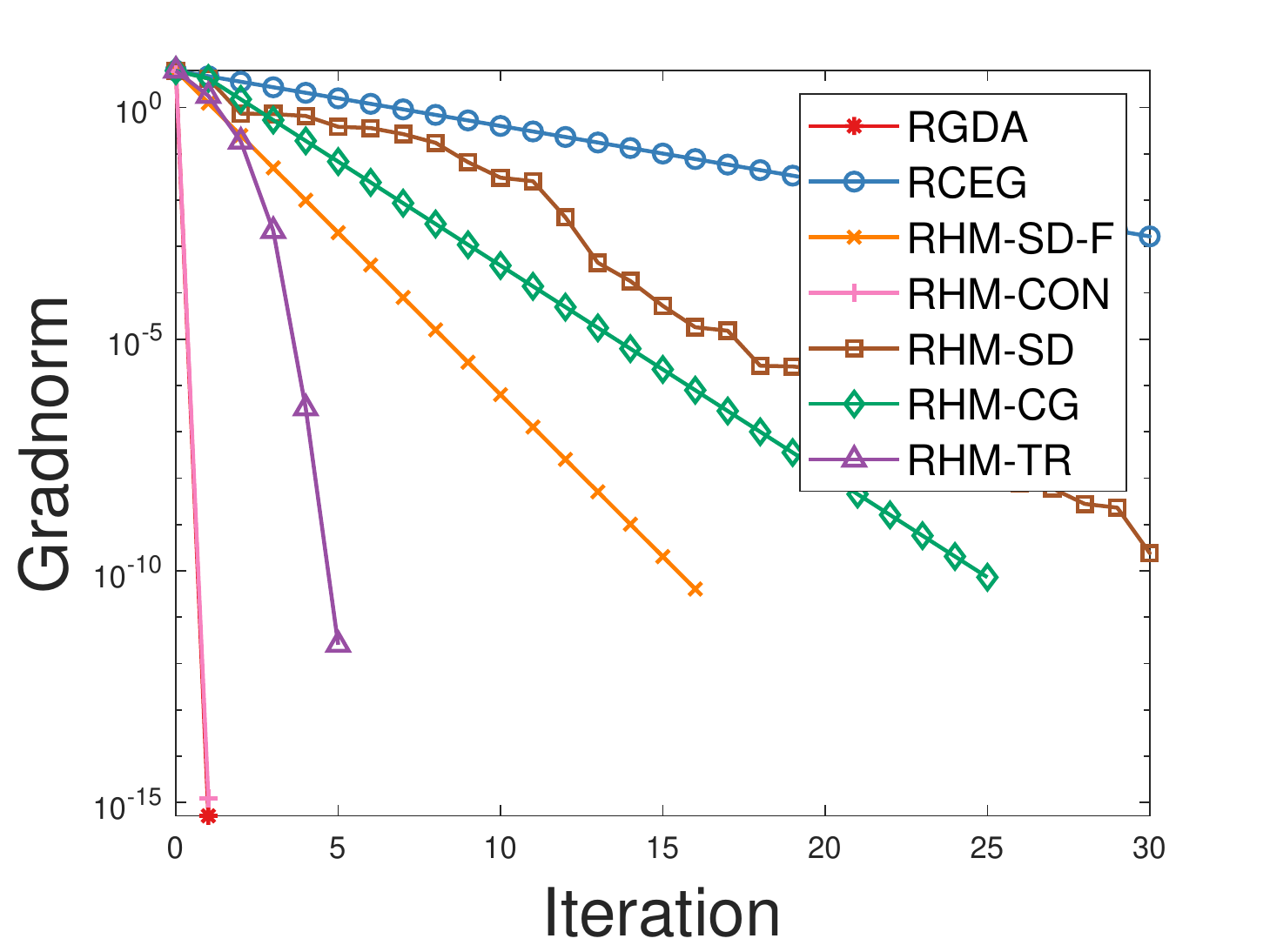}}
    \subfloat[\texttt{$c_q = 1, c_l = 0.1$} ]{\includegraphics[scale=0.29]{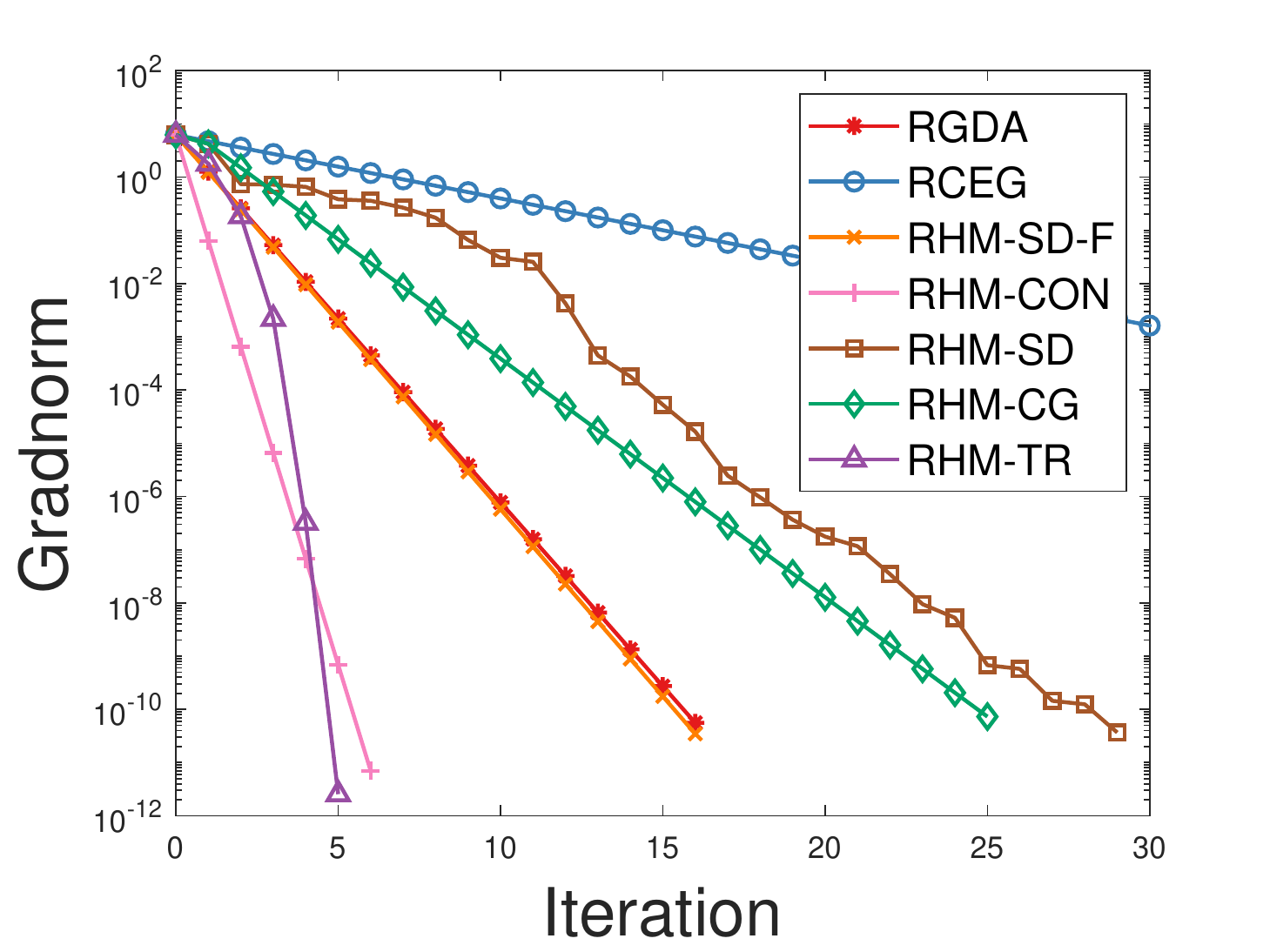}}\\
    \subfloat[\texttt{$c_q = 1, c_l = 1$} ]{\includegraphics[scale=0.29]{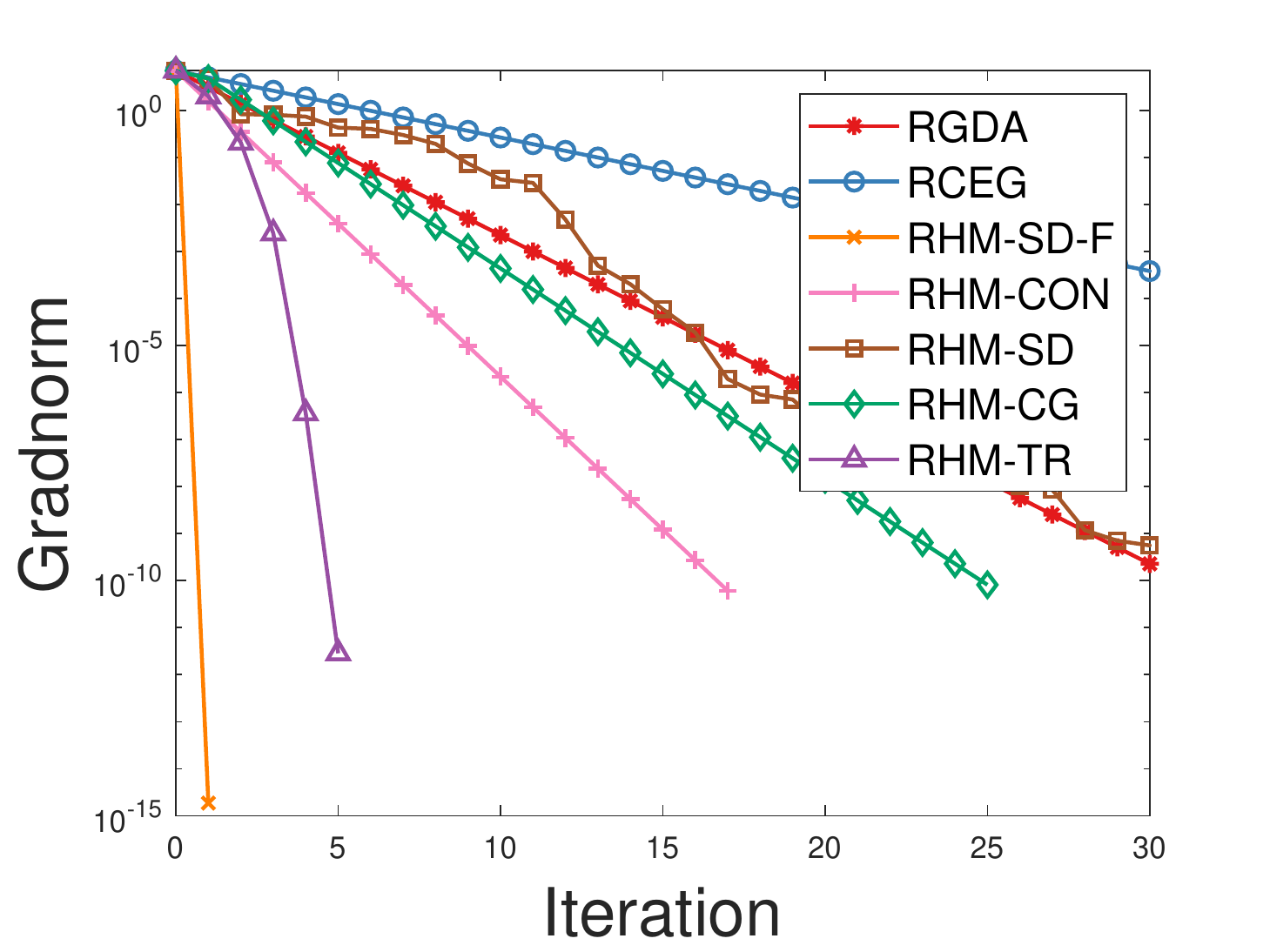}}
    \subfloat[\texttt{$c_q = 1, c_l = 10$} ]{\includegraphics[scale=0.29]{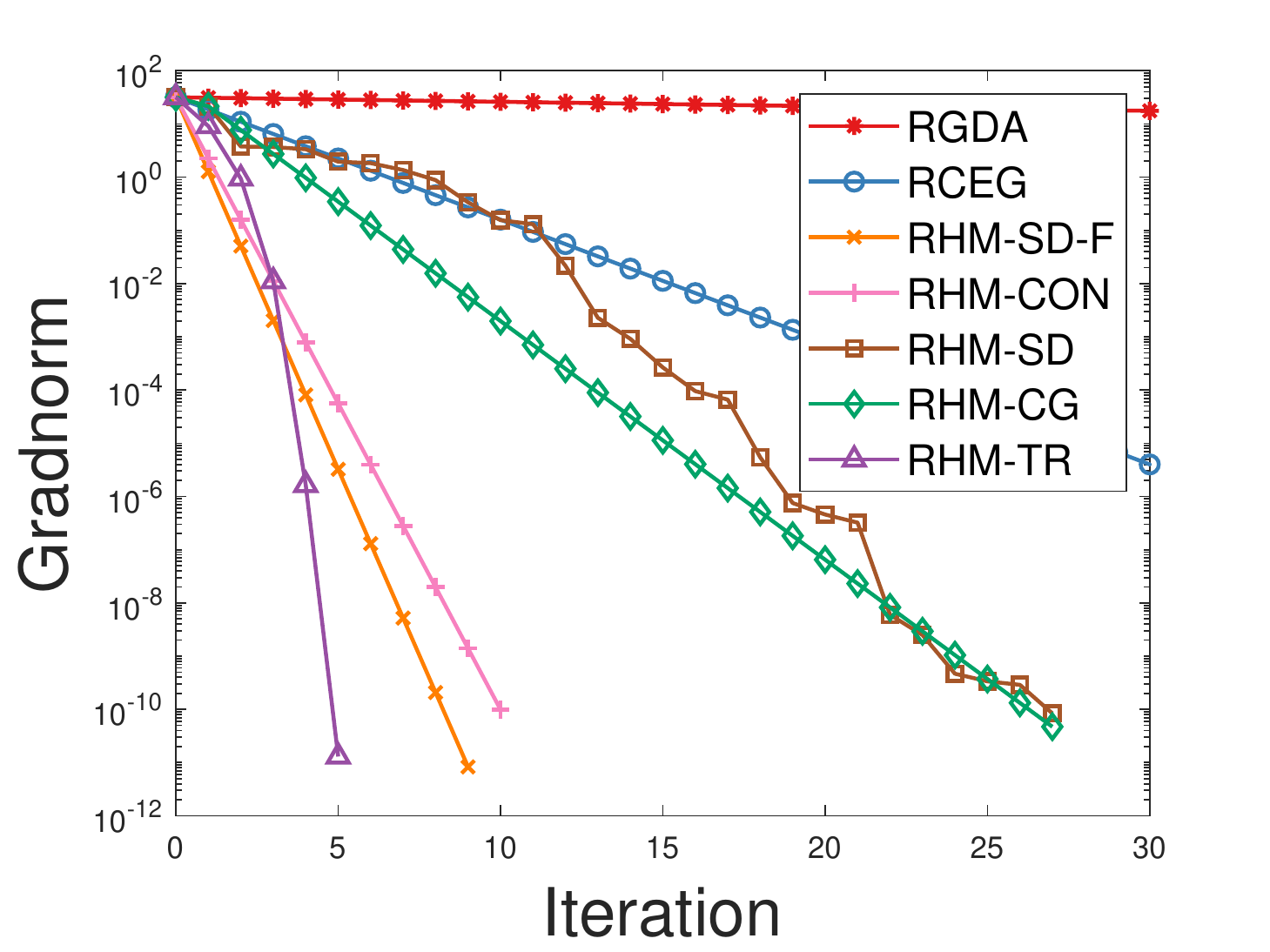}}
    \subfloat[\texttt{$c_q = 1, c_l = 10$ (opt gap)} ]{\includegraphics[scale=0.29]{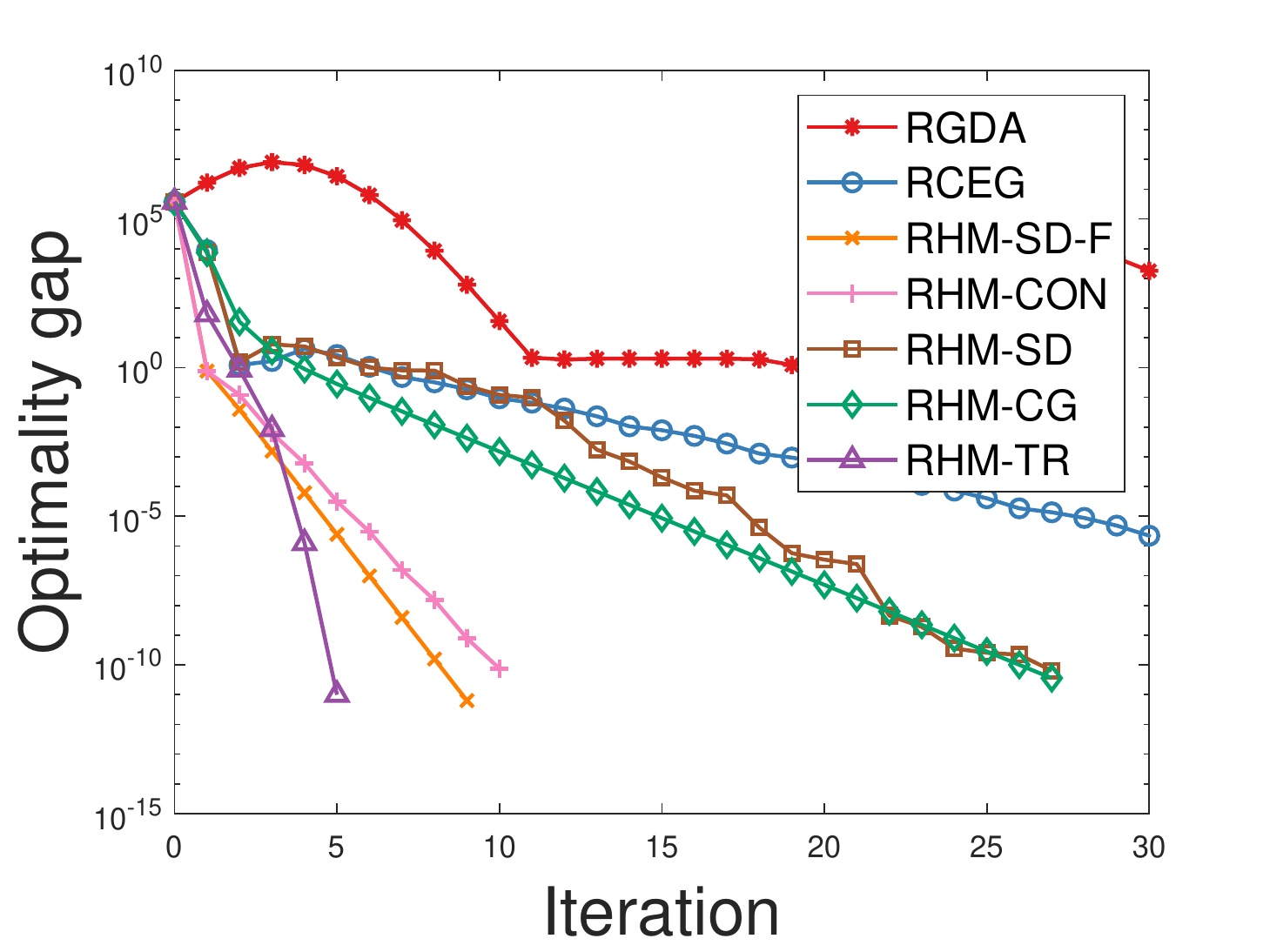} \label{disttoopt_g_quadratic}}\\
    \subfloat[\texttt{$c_q = 1, c_l = 10$ (time)} ]{\includegraphics[scale=0.29]{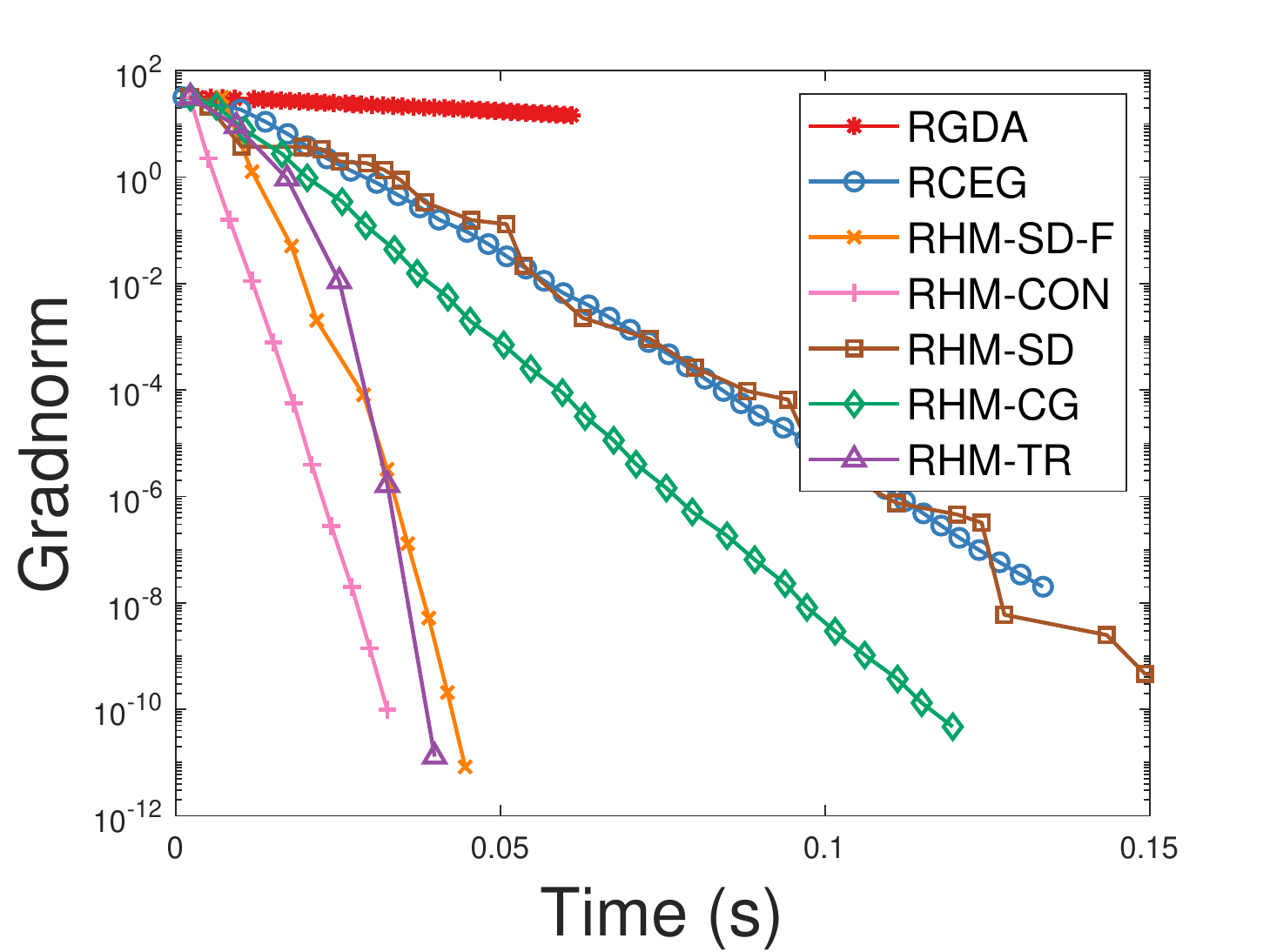} \label{runtime_g_quadratic}}
    \caption{Experiments on the geodesic quadratic bilinear problem \eqref{g_bilinear_quadratic} with $d = 30$, under varying weights $c_q, c_l$. We observe that our RHM algorithms converge quickly in all settings while baselines such as RGDA \cite{huang2020gradient} and RCEG \cite{zhang2022minimax}. The performance of RGDA varies greatly with the settings where it converges only for a few settings and for the others RGDA fails to converge. RCEG presents a relatively more stable convergence behavior than RGDA but with a rate that is slower than our proposed RHM algorithms.}
    \label{geodesic_quadratic_plot}
\end{figure*}

\subsection{Robust geometry-aware PCA}
Geometry-aware principal component analysis (PCA) on $\M_{\rm SPD}$ \cite{horev2016geometry} concerns dimensionality reduction for SPD matrices while preserving geometric structures on the manifold. The robust PCA (or robust Fr\'echet mean) on SPD manifolds has been considered in \cite{zhang2022minimax}. For a set of SPD matrices $\bM_i \in \sS_{++}^d$, $i = 1,...,n$, the aim is to find the Fr\'echet mean $\bM \in \sS_{++}^d$ that is bounded away from zero, i.e.,
\begin{equation}
    \min_{\bM \in \M_{\rm SPD}} \max_{\bx \in \gS^d} \, \bx^\top \bM \bx + \frac{\alpha}{n} \sum_{i=1}^n {\rm dist}^2(\bM, \bM_i), \label{rgpca_def}
\end{equation}
where $\alpha > 0$ and $\gS^{d-1}: = \{ \bx\in \sR^d: \| \bx \|_2 = 1 \}$ denotes the sphere manifold and $ {\rm dist}: \sS_{++}^d \times \sS_{++}^d$ is the Riemannian distance on $\M_{\rm SPD}$. 

We first note that the function in (\ref{rgpca_def}) is geodesic strongly convex in $\bM$ and geodesic nonconcave in $\bx$. Also, it is difficult to verify the Riemannian PL condition on the Hamiltonian of (\ref{rgpca_def}). Hence, this is a challenging problem instance as it does not fall into the studied settings of the existing works \cite{huang2020gradient,zhang2022minimax} including ours. 

\subsection*{Experiment settings and results}
For this problem, we follow the same settings as discussed in \cite{zhang2022minimax} for generating the SPD matrices $\bM_i$ with the eigenvalues bounded in $[\mu_0, \mu_1]$. Following \cite{zhang2022minimax}, we choose $d = 50$, $n = 40$, $\mu_0 = 0.2$, and $\mu_1 = 4.5$. The convergence results are presented in Figs.~\ref{rgpca_01} and \ref{rgpca_3}, where we only include RHM-SD-F, RHM-CON ($\gamma = 0.5$), and RHM-CG for clarity (RHM-TR performs similar to RHM-CG). We observe that although RGDA and RCEG converge faster than RHM when $\alpha = 3$, they fail to converge when $\alpha = 0.1$. The latter finding is not surprising as both RGDA and RCEG seem to perform poorly on approximately bilinear problems (as also observed in Section \ref{geodesic_quadratic_sect}). In contrast, we observe that RHM algorithms converge in both the settings, which is also validated by our analysis in Section \ref{sec:problem_classes}. It is known that the conjugate gradient based methods outperforms steepest descent methods on more challenging optimization problems. This explains the faster convergence of RHM-CG over RHM-SD-F and RHM-CON. Overall, the results in Fig. \ref{rgpca_tracelog_plots} show the benefit of the Riemannian Hamiltonian modeling in non standard settings.

\begin{figure*}[!t]
    \centering
    \subfloat[\texttt{\texttt{RGPCA} ($\alpha = 0.1$)} ]{\includegraphics[scale=0.28]{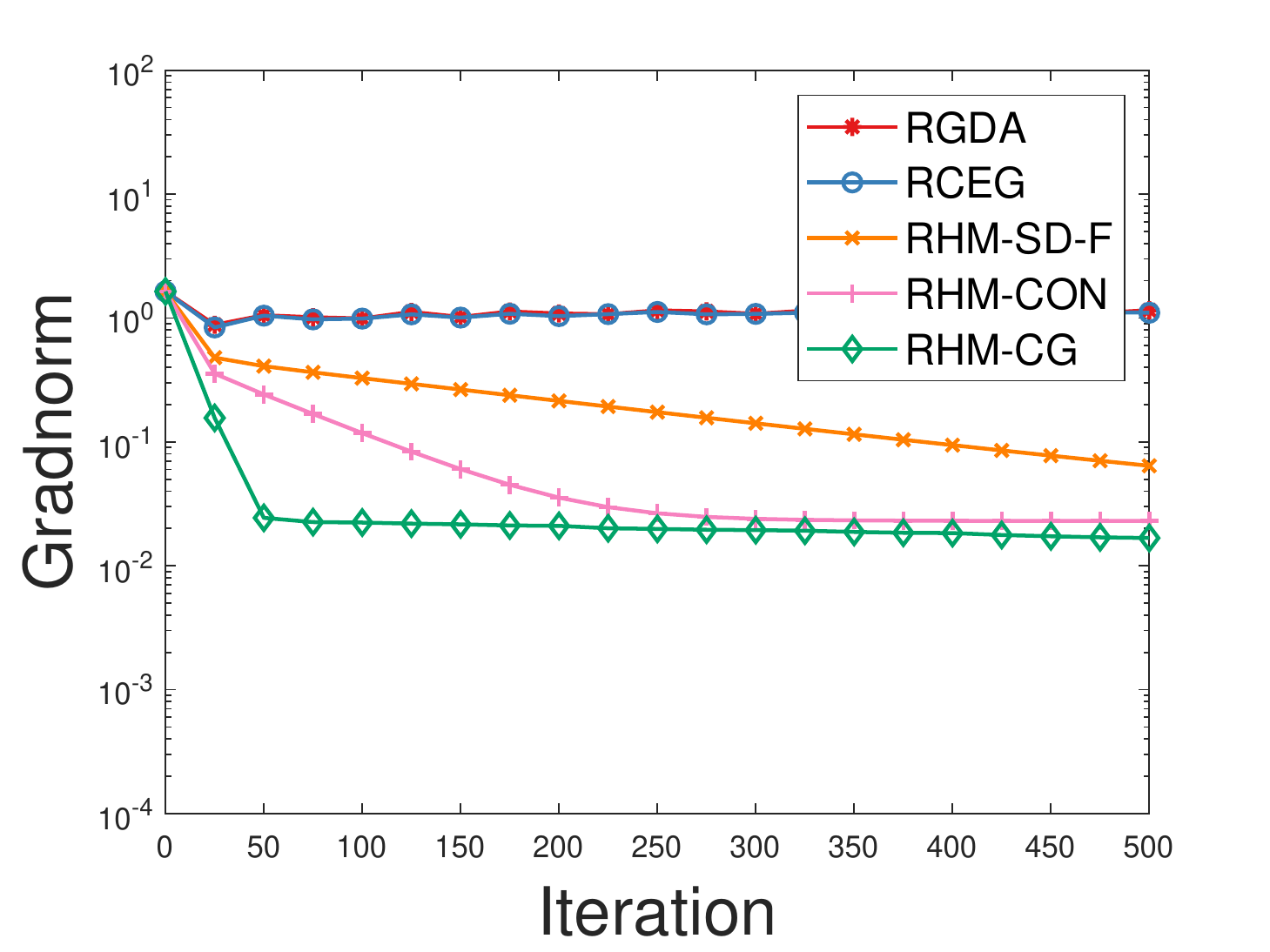}  \label{rgpca_01}}
    \subfloat[\texttt{\texttt{RGPCA} ($\alpha = 3$)} ]{\includegraphics[scale=0.28]{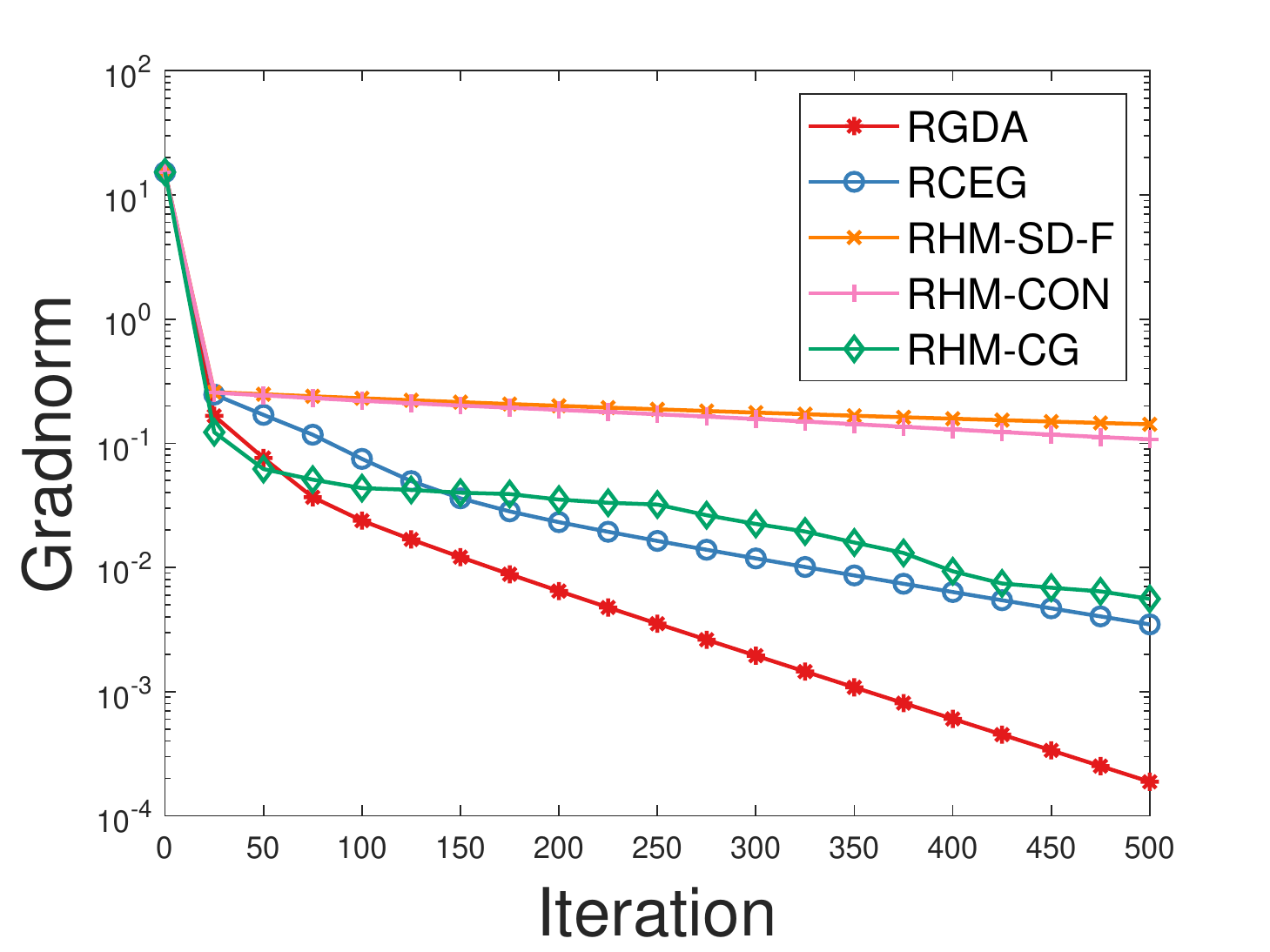}  \label{rgpca_3}} 
    \subfloat[\texttt{SRWD}]{\includegraphics[scale=0.28]{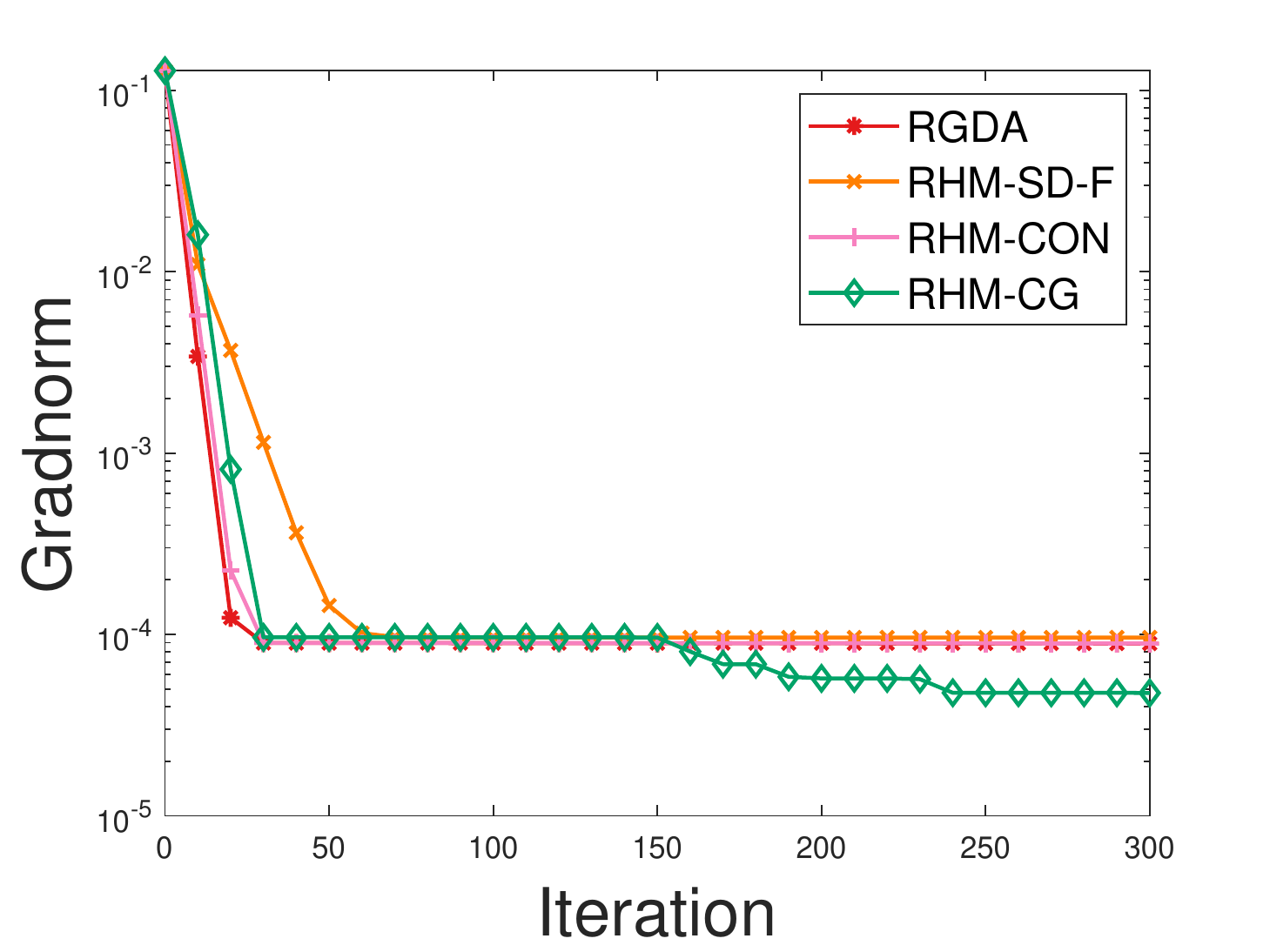}  \label{prwd_plot}}
    \caption{Convergence on the robust geometry-aware PCA (\texttt{RGPCA}) problem \eqref{rgpca_def} with $d = 50, n = 40, \mu = 0.2, L = 4.5$, and subspace robust Wasserstein distance (\texttt{SRWD}) problem on the example of fragmented hypercube \cite{lin2020projection}. We observe that the baselines RGDA and RCEG fail to converge for $\alpha = 0.1$ (approximately bilinear setting), whereas the proposed RHM algorithms show convergence for $\alpha = 0.1$ and $\alpha = 3$. } 
    \label{rgpca_tracelog_plots}
\end{figure*}

\subsection{Subspace robust Wasserstein distance}
\label{prwd_sect}
We next consider the problem of learning subspace robust Wasserstein distance \cite{paty2019subspace,lin2020projection,huang2021riemannian}, where the aim is to compute the Wasserstein distance over the worst-case optimal transport cost on a low-dimensional space. Given two discrete measures on $\sR^d$, $\mu = \sum_{i = 1}^m a_i \delta_{\bx_i}, \nu = \sum_{j=1}^n b_j \delta_{\by_j}$ where $\delta_\bx$ is the Dirac at location $\bx$. The weights $a_i, b_j$ belong to the probability simplex, i.e., $\sum_i a_i = \sum_j b_j = 1$. The objective (with entropy regularization) is then given as 
\begin{equation}
    \min_{\bGamma \in \Pi(\mu, \nu) } \max_{\bU: \bU \in {\rm St}(d, r)} \, \sum_{i,j} \Big( \Gamma_{i,j} \| \bU^\top \bx_i - \bU^\top \by_j \|_2^2 + \epsilon \, \pi_{i,j} \big( \log(\pi_{i,j}) - 1 \big) \Big), \label{prwd_formula}
\end{equation}
where ${\rm St}(d, r) := \{ \bU \in \sR^{d \times r} : \bU^\top \bU = \bI \}$ is the set of column orthonormal matrices ($d \geq r$), known as the Stiefel manifold. $\Pi(\mu, \nu) := \{ \Gamma \in \sR^{m \times n} : \Gamma_{i,j} > 0, \sum_i \Gamma_{i,j} = b_j, \sum_j \Gamma_{i,j} = a_i, \forall\, i,j \}$ is the set of couplings, which forms the so-called doubly stochastic manifold (or coupling manifold) \cite{douik2019manifold,shi2021coupling,mishra2021manifold}. 

\subsection*{Experiment settings and results}
We follow the same experiment settings as in \cite{lin2020projection,huang2021riemannian} and consider a uniform distribution over hypercube $[-1, 1]^d$ and a pushforward map defined as $T(\bx) = \bx + 2 \, {\rm sign}(\bx) \odot (\sum_{i=1}^k \be_i)$, where ${\rm sign}(\bx)$ extracts the sign of $\bx$ elementwise and $\{\be_i\}_{i=1}^d$ are the canonical basis of $\sR^d$. 

We choose $d = 30, r = 5, k = 2, n = 100, \epsilon = 0.2$ and compare the proposed RHM-SD-F, RHM-CON ($\gamma = 0.5$), RHM-CG with RGDA in Fig.~\ref{prwd_plot}. RCEG cannot be implemented to solve \eqref{prwd_formula} because the doubly stochastic manifold does not have a well-defined logarithm map. From the results, we see similar convergence speed of all methods while due to the inbuilt line-search algorithm of RHM-CG, it converges to a point with a smaller gradient norm.

\subsection{Robust training of neural networks with orthonormal weights}
\label{robust_training_sect}

We next consider adversarial robust training of deep neural networks with orthonormal weights  \cite{huang2020gradient}. Adversarial training of neural networks provide robust prediction against small data perturbations. 
Orthonormality on parameters has shown to improve generalization accuracy as well as accelerate and stabilize convergence of neural network models \cite{bansal2018can,cogswell2015reducing,wang2020orthogonal,huang2018orthogonal}. This corresponds to optimization over the Stiefel manifold.

In particular, we consider the adversarial training to defend against a universal perturbation $\bp$ proposed in \cite{moosavi2017universal}. The perturbation set we consider is the sphere manifold $\gS^{d-1}(r):= \{ \bp\in \sR^d : \| \bp \|_2 = r \}$ with radius $r$. This requires the perturbed samples to stay a certain distance away from the original ones, a strategy also applied in \cite{li2020stochastic}. Given a set of data-target pairs $\{(\bx_i, y_i)\}_{i=1}^n$ where $\bx_i \in \sR^d$ are the feature vectors. The objective of adversarial training is 
\begin{equation*}
    \min_{ \{\bW_\ell\}_{\ell =1 }^L : \bW_\ell \in {\rm St}(d_\ell, d_{\ell +1}) }\ \max_{\bp \in \gS^{d-1}(r) } \,  \frac{1}{n}\sum_{i = 1}^n \mathcal{L} \big( h(\bx_i + \bp ; \{\bW_\ell \}_{\ell=1}^L ), y_i \big),
\end{equation*}
where $\mathcal{L}(\cdot, \cdot)$ is a loss function and $h(\cdot)$ represents the forward function of a neural network. 

\begin{figure*}[!t]
    \centering
    \begin{subfigure}[c]{.3\textwidth}
      \centering
      \includegraphics[scale=0.23]{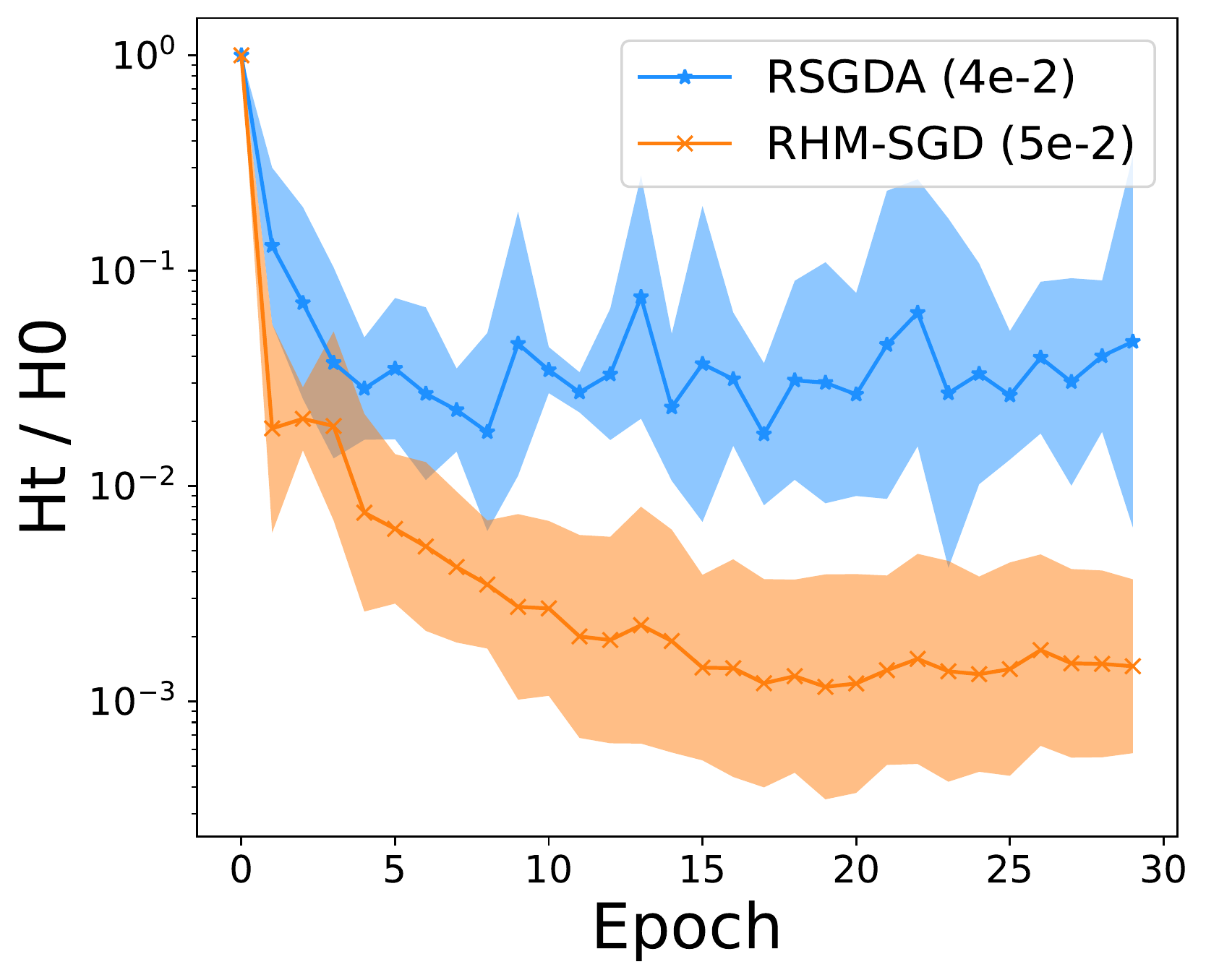}
      \caption{\texttt{RTNN}: {convergence}}
      \label{rtnn_plot} 
    \end{subfigure} 
    \begin{subfigure}[c]{.3\textwidth}
      \centering
      \includegraphics[scale=0.23]{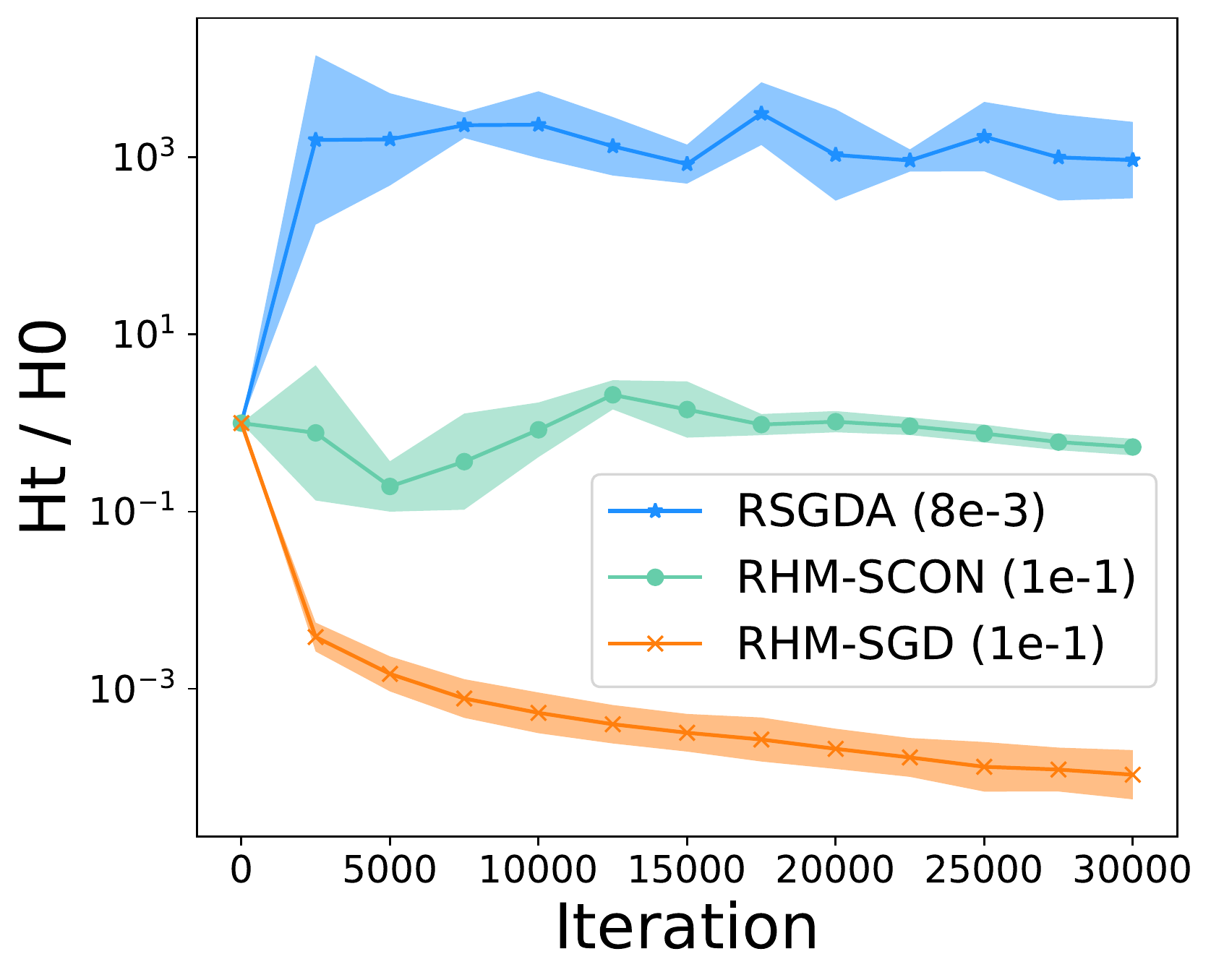}
      \caption{\texttt{OGAN}: {convergence}}
      \label{gan_hamit_plot} 
    \end{subfigure} 
    \begin{subfigure}[c]{.3\textwidth}
      \centering
      \includegraphics[scale=0.34]{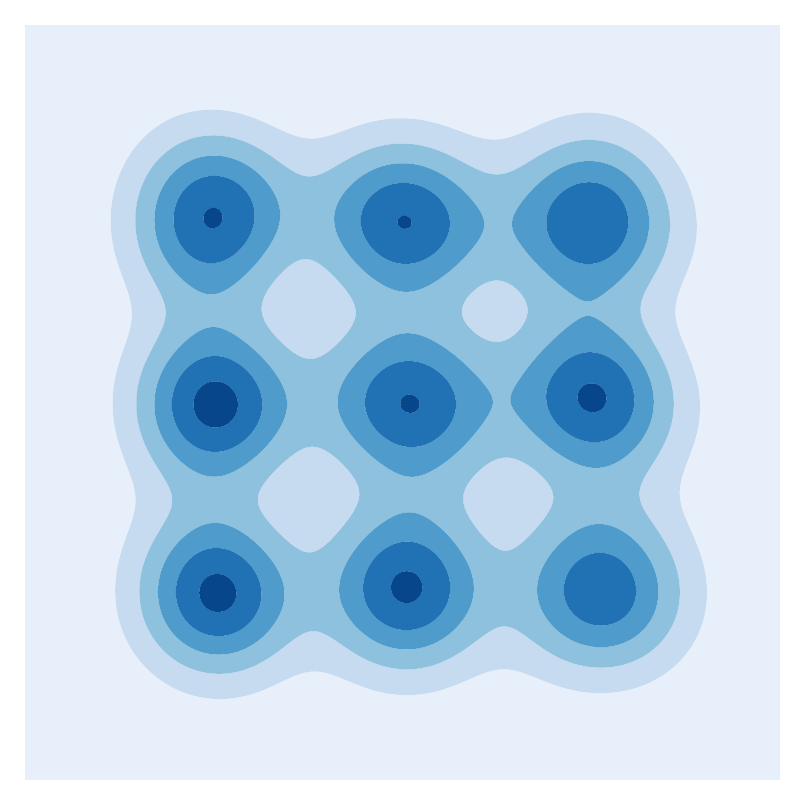}
      \\[11pt]
      \caption{\texttt{OGAN}: {ground truth}}
      \label{gan_groundtrusth} 
    \end{subfigure} 
    \\
    \begin{subfigure}[c]{.48\textwidth}
      \centering
      \includegraphics[scale=0.24]{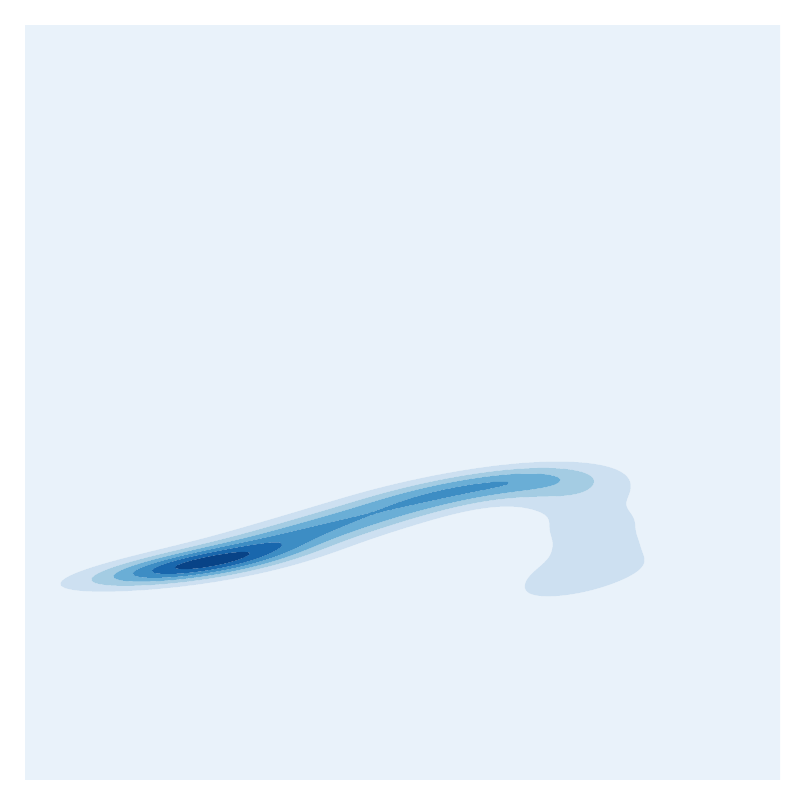}
       \includegraphics[scale=0.24]{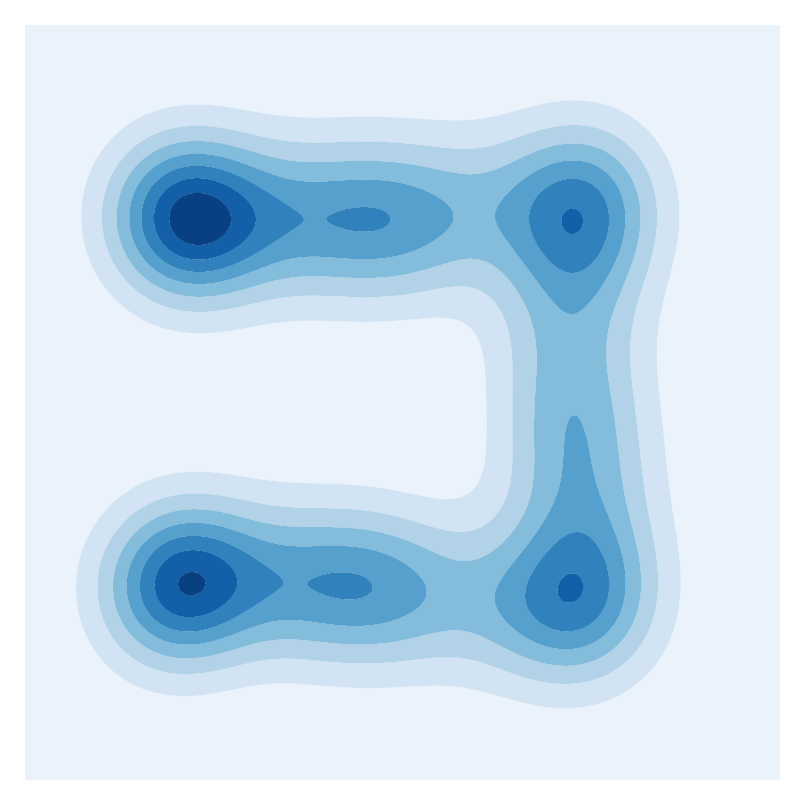}
       \includegraphics[scale=0.24]{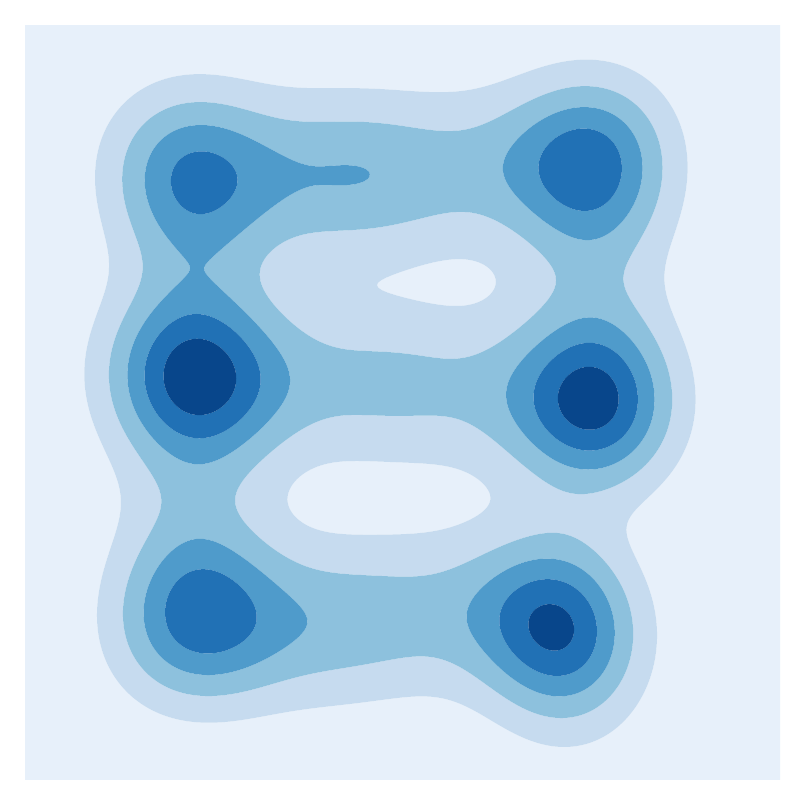}
      \caption{\texttt{RSGDA}: generated samples at $1, 2, 3 \times 10^4$ iterations from left to right} 
      \label{rsgda_generate_gan} 
    \end{subfigure} 
    \hfill
    \begin{subfigure}[c]{.48\textwidth}
      \centering
      \includegraphics[scale=0.24]{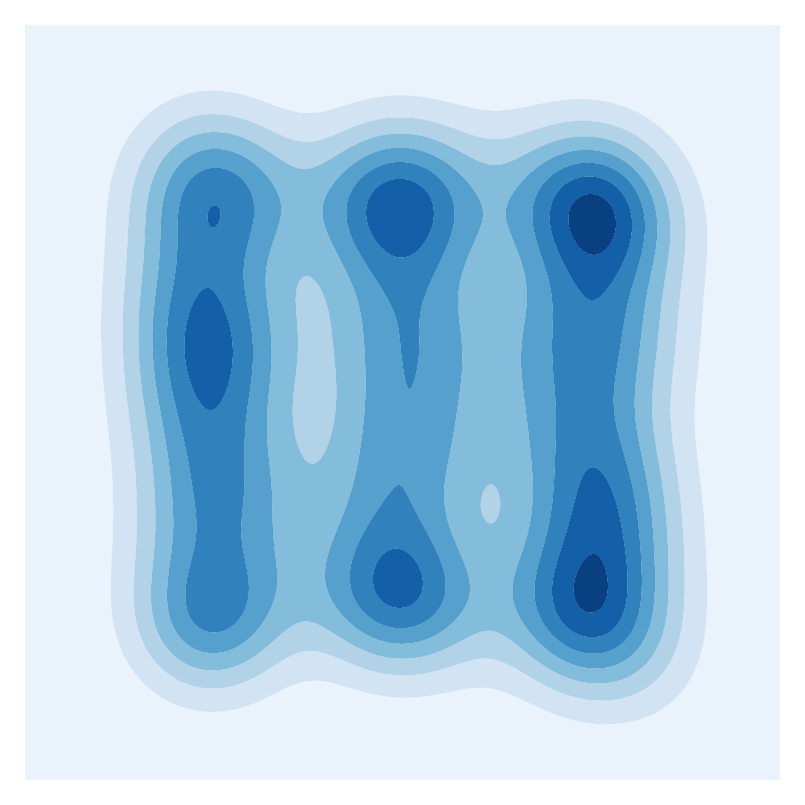}
      \includegraphics[scale=0.24]{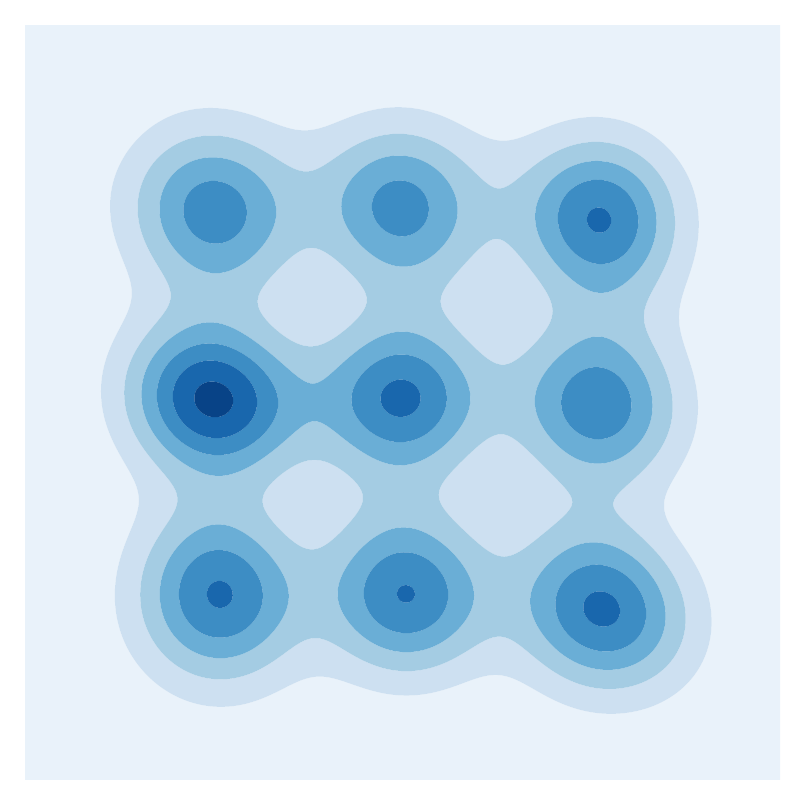}
      \includegraphics[scale=0.24]{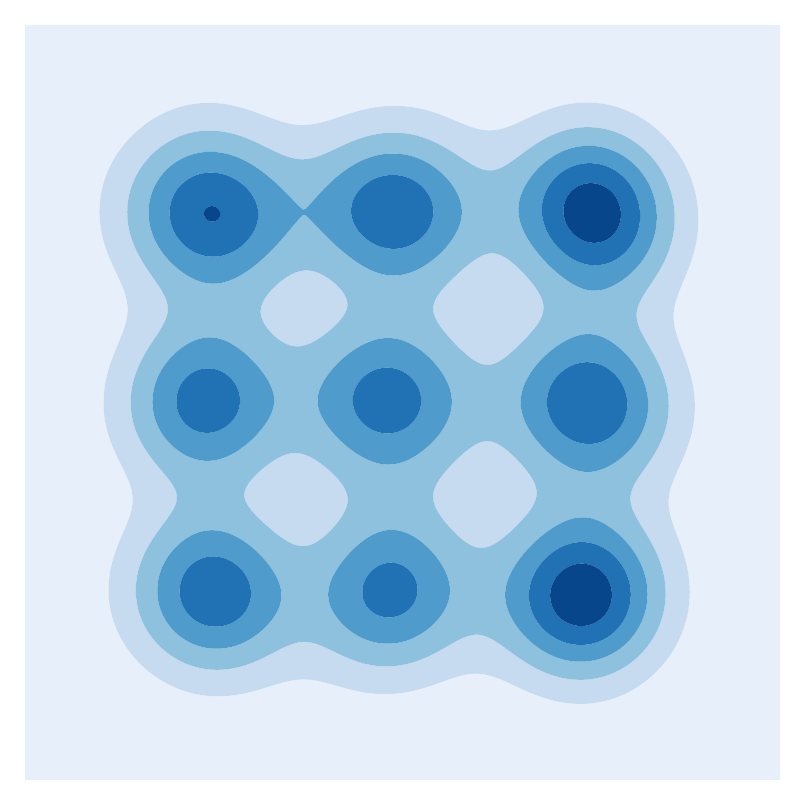}
      \caption{\texttt{RHM-SCON}: generated samples at $1, 2, 3 \times 10^4$ iterations from left to right} 
      \label{rhmsgd_generate_gan} 
    \end{subfigure} 
    \caption{\eqref{rtnn_plot}: Convergence on adversarial robust training of neural network (\texttt{RTNN}). \eqref{gan_hamit_plot}: generative adversarial networks with orthonormal weights (\texttt{OGAN}). \eqref{gan_groundtrusth}: Ground truth distribution. \eqref{rsgda_generate_gan}, \eqref{rhmsgd_generate_gan}: generated samples from RSGDA, RHM-SCON respectively where we see RHM-SCON quickly converge to the ground truth distribution while RSGDA suffers from mode collapse. The numbers in the parentheses indicate the best tuned stepsizes for different algorithms.}
    \label{rtnn_plots}
\end{figure*}

\subsection*{Experiment settings and results}
The adversarial training is implemented for classification tasks on MNIST images \cite{lecun1998gradient} where we include two hidden layers of size $16$ with the orthonormality constraint. We compare the proposed stochastic version of RHM (RHM-SGD), detailed in Section \ref{stochastic_hm_sect}, with Riemannian stochastic gradient descent ascent (RSGDA) algorithm \cite{huang2020gradient}. We highlight that RHM-SCON performs similarly to RHM-SGD, and thus, we exclude its result for clarity. Because we require dual sampling per-iteration to compute the stochastic Hamiltonian gradient $\grad \gH_{\gS, \gS'}(p_t) = \frac{1}{|\gS||\gS'|} \sum_{\omega \in \gS, \varphi \in \gS'} \grad \gH_{\omega,\varphi} (p)$, we choose the batch size to be $32$ for both $\gS, \gS'$ and $64$ for RSGDA. Hence, the per-iteration sampling cost is identical. We measure convergence in terms of the relative Hamiltonian ${\gH(p_t)}/{\gH(p_0)}$, where the Hamiltonian is evaluated on the full training set. The stepsize is fixed for both the algorithms.

We plot the convergence results (with the best tuned stepsize) in Fig. \ref{rtnn_plot}, which are averaged over five different runs. We see a clear advantage of RHM-SGD compared to RSGDA with faster and more stable convergence.

\subsection{Orthonormal generative adversarial networks}
\label{orthogonal_GAN_sect}
Generative adversarial networks (GAN) \cite{goodfellow2014generative,balduzzi2018mechanics} are popular in generating synthetic samples by optimizing a min-max game between a generator and a discriminator. The orthonormality constraint on weight parameters of the discriminator has shown to benefit the training of GANs \cite{brock2018large,muller2019orthogonal}. In particular, given samples $\{\bx_i\}_{i=1}^n$ we consider the following min-max problem
\begin{equation*}
    \min_{\{\bW^G_\ell\}}\  \max_{\{\bW^D_\ell\}: \bW^D_\ell \in {\rm St(d_\ell, d_{\ell +1})}} \, \frac{1}{n} \sum_{i=1}^n  \Big( \log (\sigma(D(\bx_i))) + \log(1 - \sigma(D(G(\bz_i))))  \Big),
\end{equation*}
where $D(\cdot), G(\cdot)$ represent the discriminator and generator with $\{ \bW^D_\ell\}, \{ \bW^G_\ell \}$ denoting their network weight parameters respectively. Here, $\sigma(\cdot)$ is the sigmoid function and the prior $\bz_i$ is sampled from the standard normal distribution.

\subsection*{Experiment settings and results}
Following \cite{balduzzi2018mechanics}, we train the GAN model on 2-d samples from a multimodal mixture of Gaussian distribution. The ground truth is shown in Fig. \ref{gan_groundtrusth}. Both the generator and discriminator have 5 hidden layers with 128 units and ReLU activation. The dimension of the prior $\bz_i$ is 64. For simplicity, we add the orthonormal constraint only for the penultimate layer of the discriminator model. For this experiment, we apply RHM-SCON 
with $\gamma = 0.5$ and compare against RSGDA, both with fixed stepsize. The batch size is chosen to be 128 for RHM-SCON and 256 for RSGDA. Similarly, the best choices of stepsize are reported, 
and the results are averaged over five different runs. 

The convergence in terms of the relative Hamiltonian are shown in Fig.~\ref{gan_hamit_plot}, where we see RSGDA diverges while RHM-SCON is more stable. We also examine the solution quality by providing the generated samples from both algorithms at iteration $10^4, 2\times 10^4,$ and $3\times 10^4$ in Figs~\ref{rsgda_generate_gan} and \ref{rhmsgd_generate_gan} respectively. We note that RSGDA results in undesired mode collapse, an observation also made in \cite{balduzzi2018mechanics} for training SGDA on the Euclidean space. In contrast, RHM-SCON quickly converges and recovers the ground truth distribution. Even though RHM-SGD converges to a lower Hamiltonian value, its performance in recovery of the ground truth is poor, as shown in Fig. \ref{rhm_puresgd_gan} in Appendix \ref{app:sec:experiments} where the generated samples collapse to a single point. It indicates that RHM-SGD converges to a stationary point which is not a saddle point (not surprising as Assumption \ref{stationary_assumption} may not be satisfied). This also highlights the practical benefit of consensus regularization for RHM (Section \ref{RHM_con_sect}), as evidenced in the good performance of RHM-SCON.


\section{Concluding remarks}\label{sec:conclusion}
Building on the success of the Hamiltonian methods for solving min-max problems in the Euclidean space, we have considered a more general problem on manifolds, and proposed a Riemannian Hamiltonian function $\gH$ that respects the manifold geometry. This leads to a gradient expression (in Proposition \ref{gradient_hamiltonian_prop}) that allows simple analysis for the resulting optimization methods. Adapting the proofs from the Euclidean space to Riemannian manifolds requires to forgo the matrix structure of the ingredients, which includes addressing a varying inner product (Riemannian metric). The proposed Riemannian Hamiltonian methods (RHM) come with convergence guarantees and various extensions. The experiments validate the good performance of RHM in different applications. As future work, one direction is to explore the utility of RHM for more general nonconvex nonconcave problems without the Riemannian PL assumption.
In addition, the current convergence analysis is measured in the Riemannian Hamiltonian, which is the gradient norm squared of the original objective $f$. It remains a question whether linear convergence can be maintained in terms of the optimality gap on function value of $f$.



\appendix



\section{Riemannian geometries of the considered manifolds}\label{app:sec:matrix_manifolds}

In this section, we review the Riemannian optimization-related ingredients of several manifolds that are considered in the experiments section. The expressions are from the works \cite{absil2009optimization,boumal2020introduction,sra2015conic,shi2021coupling,douik2019manifold,mishra2021manifold}.



\subsection{Symmetric positive definite manifold}
Consider the set of the symmetric positive definite matrices of size $d\times d$, $\sS_{++}^d := \{ \bX: \sR^{d \times d}: \bX^\top = \bX, \bX \succeq \bzero \}$, equipped with the affine-invariant Riemannian metric. The geodesic from $\bX$ to $\bY$ is given by $\gamma(t) = \bX^{1/2}( \bX^{-1/2} \bY \bX^{-1/2} )^t \bX^{1/2}$. At $\bX \in \sS_{++}^d$, the exponential map is derived as ${\rm Exp}_{\bX}(\bU) = \bX \exp(\bX^{-1}\bU)$ for any $\bU \in T_\bX\sS_{++}^d$. The logarithm map is ${\rm Log}_{\bX}(\bY) = \bX \log(\bX^{-1} \bY)$. The Riemannian gradient of a function $f: \sS_{++}^d: \xrightarrow{} \sR$ is given by $\grad f(\bX) = \bX \nabla f(\bX) \bX$, where $\nabla f(\bX)$ is the Euclidean partial derivative of $f$ at $\bX$.

\subsection{Sphere manifold} It is defined as
$\gS^{d-1} = \{ \bx \in \sR^d : \| \bx \|_2 = 1 \}$, which is an embedded submanifold of $\mathbb{R}^d$ with the tangent space expression $T_\bx \gS^{d-1} = \{ \bu \in \sR^d: \bx^\top \bu = 0\}$. It can be endowed with the standard inner product at the Riemannian metric, i.e., $\langle \bu, \bv \rangle_\bx = \langle \bu, \bv \rangle_2$, for $\bu,\bv \in T_\bx \gS^{d-1}$. The orthogonal projection of any $\bv \in \sR^d$ to $T_\bx \gS^{d-1}$ is derived as ${\rm Proj}_\bx(\bv) = \bv - (\bx^\top \bv) \bx$. The exponential map along $u\in T_\bx \gS^{d-1}$ is ${\rm Exp}_{\bx}(\bu) = \cos(\| \bv \|_2) \bx + \sin (\| \bv\|_2) \frac{\bv}{\| \bv\|}$ and the logarithm map is ${\rm Log}_{\bx}(\by) =  \arccos(\bx^T\by)\frac{{\rm Proj}_\bx(\by - \bx)}{\|{\rm Proj}_\bx(\by - \bx) \|_2}$. The Riemannian gradient of $f$ is ${\rm Proj}_{\bx}(\nabla f(\bx))$, where $\nabla f(\bx)$ is the Euclidean partial derivative of $f$ at $\bx$.

\subsection{Stiefel manifold} It is the set ${\rm St}(d, r) = \{ \bX \in \sR^{d \times r} : \bX^\top \bX = \bI \}$. It is a generalization of the sphere manifold to higher dimensions and can be similarly endowed with the standard inner product as metric $\langle \bU, \bV \rangle_\bX = \langle \bU, \bV \rangle_2$ For the experiments, we consider the popular QR-based retraction for approximating the exponential map, i.e., $R_{\bX}(\bU) = {\rm qf}(\bX + \bU)$, where ${\rm qf}(\cdot)$ returns the Q-factor from the QR decomposition for any tangent vector $\bU$.

\subsection{Doubly stochastic manifold}
The doubly stochastic manifold (or coupling manifold) between two discrete probability measures $\mu = \sum_{i = 1}^m a_i \delta_{\bx_i}, \nu = \sum_{j=1}^n b_j \delta_{\by_j}$ is the set of couplings $\Pi(\mu, \nu) := \{ \bGamma \in \sR^{m \times n} : \bGamma_{i,j} > 0, \sum_i \bGamma_{i,j} = b_j, \sum_j \bGamma_{i,j} = a_i, \forall \, i,j \}$ endowed with the Fisher information Riemnnanian metric. The geometry has been developed in \cite{douik2019manifold,shi2021coupling,mishra2021manifold}.

Without loss of any generality, we assume $\sum_i a_i = \sum_j b_j = 1$. The tangent space at $\bGamma \in \Pi(\mu, \nu)  $ is given by $T_\bGamma \Pi(\mu, \nu) = \{ \bU \in \sR^{m \times n} : \sum_i \bU_{i,j} = \sum_j \bU_{i,j} =  0, \forall \, i,j\}$. The Fisher information metric is defined as for $\bU, \bV \in T_\bGamma \Pi(\mu, \nu)$, $\langle \bU, \bV \rangle_\bGamma = \sum_{i,j} (\bU_{i,j} \bV_{i,j})/\bGamma_{i,j}$. For the experiments, we consider the Sinkhorn-based retraction. The Sinkhorn-Knopp algorithm \cite{sinkhorn1964relationship} is a popular approach for balancing non-negative matrices to satisfy the row-sum and column sum constraint and later adapted to solve the optimal transport problem efficiently \cite{peyre2019computational}. Let $\bA \in \sR^{m \times n}, \bA_{i,j} > 0$, and denote ${\rm Sinkhorn}(\bA)$ as the output of applying the Sinkhorn-Knopp algorithm on $\bA$ with constraint defined by $\Pi(\mu, \nu)$, i.e., ${\rm Sinkhorn}(\bA) \in \Pi(\mu, \nu)$. Subsequently, the retraction is given by $R_\bGamma( \bU ) = {\rm Sinkhorn}( \bGamma \odot \exp( \bU \oslash \bGamma ) )$, where $\exp$, $\odot$, and $\oslash$ are elementwise exponential, product, and division operations, respectively.

\section{Line-search methods and Wolfe conditions on Riemannian manifolds}
\label{appendix_linesearch}
In this section, we present the Riemannian versions of the Armijo, Wolfe, and strong Wolfe conditions \cite{sato2021riemannian}.
\begin{definition}
\label{def_linesearch_condition}
Consider an iterative algorithm for minimizing $h:\M \xrightarrow{} \sR$, producing $p_{t+1} = {\rm Exp}_{p_{t}}(\vartheta_t \xi_t)$ for some direction $\xi_t \in T_{p_t}\M$ and stepsize $\vartheta_t \in \sR$. The Armijo condition is $h(p_{t}) - h(p_{t+1}) \geq r_1 \vartheta_t \langle -\grad h(p_t), \xi_t \rangle$, for some $r_1 \in (0,1)$. The (weak) Wolfe condition is the Armijo condition together with \eqref{weak_wolfe} and the strong Wolfe condition is the Armijo condition with \eqref{strong_wolfe}, where
\begin{align}
    \langle \grad h(p_{t+1}), \D {\rm Exp}_{p_t}(\vartheta_t \xi_t)[\xi_t] \rangle_{p_{t+1}} &\geq r_2 \langle \grad h(p_t), \xi_t \rangle_{p_t} \label{weak_wolfe} \\
    |\langle \grad h(p_{t+1}), \D {\rm Exp}_{p_t}(\vartheta_t \xi_t)[\xi_t] \rangle_{p_{t+1}}| &\leq r_2 |\langle \grad h(p_t), \xi_t \rangle_{p_t}| \label{strong_wolfe}
\end{align}
for some $r_2 \in (r_1, 1)$. Here, $\D {\rm Exp}$ is the differential of the exponential operation.
\end{definition}
The backtracking line-search for satisfying the Armijo condition has been used in Riemannian steepest descent method \cite{boumal2019global}.

One can generalize the analysis from the Euclidean space to show that there exists a stepsize that satisfy the three conditions for arbitrary direction $\xi_t$. The backtracking line-search for satisfying the Armijo condition is in Algorithm \ref{line_search_algorithm}. This has been used in Riemannian steepest descent method \cite{boumal2019global}. The procedures that return stepsizes satisfying the Wolfe conditions are in \cite{sato2016dai,nocedal1999numerical}. 

\begin{algorithm}[t]
 \caption{Backtracking line-search}
 \label{line_search_algorithm}
 \begin{algorithmic}[1]
  \STATE \textbf{Input:} Current iterate $p_t \in \M$, search direction $\xi_t \in T_{p_t}\M$, initial stepsize $\bar{\vartheta}$ and $r_1, \varrho \in (0,1)$.
  \STATE Initialize $\vartheta \xleftarrow{} \bar{\vartheta}$.
  \WHILE{$h(p_t) - h({\rm Exp}_{p_t}(\vartheta \xi_t)) < r_1 \vartheta \langle -\grad h(p_t), \xi_t \rangle_{p_t}$}
  \STATE Set $\vartheta \xleftarrow{} \varrho \bar{\vartheta}$. 
  \ENDWHILE
  \STATE \textbf{Output:} $\vartheta$.
 \end{algorithmic} 
\end{algorithm}

\section{Review of RGDA and RCEG}\label{app:sec:rgda_rceg}
In this section, we provide the details of the Riemannian gradient descent ascent \cite{huang2020gradient} and Riemannian corrected extra-gradient \cite{zhang2022minimax} algorithms for min-max optimization on manifolds. 

RGDA simultaneously updates the variables in the direction of the min-max Riemannian gradient, i.e.,
\begin{equation*}
    x_{t+1} = {\rm Exp}_{x_t}(-\eta_t \ \grad_x f(x_t, y_t)), \quad y_{t+1} = {\rm Exp}_{y_t}(\eta_t \ \grad_y f(x_t, y_t)).
\end{equation*}

RCEG first updates the variables to the point $(w_t, z_t)$ along the min-max Riemannian gradient. It then uses the obtained point to generate the final update, i.e.,
\begin{align*}
    w_t &= {\rm Exp}_{x_t}(-\eta_t \ \grad_x f(x_t, y_t)), \\
     z_t &= {\rm Exp}_{y_t}(\eta_t \ \grad_y f(x_t, y_t)), \\
    x_{t+1} &= {\rm Exp}_{w_t} (-\eta_t \ \grad_x f(w_t, y_t) + {\rm Log}_{w_t}(x_t)), \\
    y_{t+1} &= {\rm Exp}_{z_t} (\eta_t \ \grad_y f(w_t, y_t) + {\rm Log}_{z_t}(y_t)).
\end{align*}

{
In \cite{zhang2022minimax}, only  convergence for g-convex-concave functions is analyzed, where the authors show that RCEG converges sublinearly with {averaged iterate} under the fixed stepsize $\eta \leq \frac{1}{2L_1 \tau_{\zeta, D}}$ where $\tau_{\zeta, D} > 1$ depends on the curvature and diameter of the domain. Thus, the analysis is only local with domain-dependent rate of convergence.  
The recent work \cite{jordan2022first} starts by showing average-iterate convergence of RCEG under g-convex-concave functions and last-iterate convergence under g-strongly-convex-concave functions. Nevertheless, similar assumptions on the bounded domain (and also the curvature) is required. The stepsize also requires to be carefully selected, which depends on the curvature and diameter bound. In addition, \cite{jordan2022first} proves convergence for RGDA under similar settings. 
For g-strongly-convex-concave functions, the last-iterate convergence of RGDA requires a diminishing stepsize, and for g-convex-concave functions, the average-iterate convergence of RGDA require a stepsize that again depends on the curvature and diameter bound. 
}

\section{Key propositions}
\label{app:sec:productHessian}

In this section, we derive the explicit expression for the Riemannian Hessian on the product manifold $\M = \M_x \times \M_y$ and show that the cross derivatives are adjoint with respect to the Riemannian metric.



\begin{proposition}[Riemannian Hessian of product manifold]
\label{riem_hess_prop}
Consider a product Riemannian manifold $\M = \M_x \times \M_y$ and $f : \M \xrightarrow{} \sR$. For any $p = (x, y) \in \M$ and $\xi = (u,v) \in T_x\M$, the Riemannian Hessian $\hess f(p)[\xi]$ is derived as 
\begin{equation*}
    \hess f(p)[\xi] = \begin{pmatrix} \hess_x f(x,y)[u] + \grad^2_{yx} f(x,y)[v] \\[2pt] \grad_{xy}^2 f(x,y)[u]  + \hess_y f(x,y)[v] \end{pmatrix}.
\end{equation*}
\end{proposition}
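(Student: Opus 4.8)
The plan is to compute the Riemannian Hessian of $f$ on the product manifold $\M = \M_x \times \M_y$ directly from the definition $\hess f(p)[\xi] = \bnabla_\xi \grad f$, where $\bnabla$ is the Levi--Civita connection of $\M$. The first key observation is that the product metric $\langle (u,u'),(v,v')\rangle_p = \langle u,v\rangle_x^{\M_x} + \langle u',v'\rangle_y^{\M_y}$ induces a product connection: if $\bnabla^{\M_x}$ and $\bnabla^{\M_y}$ are the Levi--Civita connections on the factors, then $\bnabla_{(U,U')}(V,V') = (\bnabla^{\M_x}_U V, \bnabla^{\M_y}_{U'} V')$ is torsion-free and metric-compatible for the product metric, hence equals the Levi--Civita connection of $\M$ by uniqueness. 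Strictly, one must be a little careful because vector fields on $\M$ need not split as pairs of vector fields on the factors; the clean way is to verify the Koszul formula respects the block structure, or to extend tangent vectors to vector fields along coordinate slices and use that the connection is tensorial in its subscript argument.

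Next I would write $\grad f(p) = (\grad_x f(x,y), \grad_y f(x,y)) \in T_x\M_x \times T_y\M_y$, which follows from the definition of the Riemannian gradient together with the product structure of the metric: the defining identity $\langle \grad f(p),\xi\rangle_p = \D f(p)[\xi]$ splits coordinatewise. Then, applying the product connection to the tangent vector $\xi = (u,v)$, I get
\begin{equation*}
    \hess f(p)[\xi] = \bnabla_{(u,v)}\,(\grad_x f, \grad_y f) = \big( \bnabla^{\M_x}_{(u,v)} \grad_x f,\ \bnabla^{\M_y}_{(u,v)} \grad_y f \big).
\end{equation*}
For the first component I would split $\bnabla^{\M_x}_{(u,v)} \grad_x f$ into the part moving in the $x$-direction and the part moving in the $y$-direction. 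The $u$-part is $\bnabla^{\M_x}_u \grad_x f = \hess_x f(x,y)[u]$, the partial Riemannian Hessian with $y$ held fixed. The $v$-part is $\D_y \grad_x f(x,y)[v]$, which by the definition of the Riemannian cross derivative given earlier in the Preliminaries (namely $\grad^2_{yx} f(x,y)[v] := \D_y \grad_x f(x,y)[v]$) equals $\grad^2_{yx} f(x,y)[v]$; here one uses that moving purely in the $y$-direction does not invoke the $\M_x$-connection's Christoffel symbols since $\grad_x f$ lives in $T\M_x$ and the base point in $\M_x$ is unchanged. Symmetrically, the second component is $\grad^2_{xy} f(x,y)[u] + \hess_y f(x,y)[v]$, giving exactly the claimed block expression.

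The main obstacle is the bookkeeping around the decomposition of the covariant derivative into ``intrinsic'' and ``cross'' pieces — that is, justifying rigorously that $\bnabla^{\M_x}_{(u,v)} \grad_x f = \hess_x f[u] + \D_y \grad_x f[v]$ with no cross-curvature correction terms. This rests on the fact that the product connection has vanishing mixed Christoffel symbols (differentiating an $\M_x$-vector field in the $\M_y$-direction is just an ordinary directional derivative, and vice versa), which in turn follows from the block-diagonal form of the product metric. I would either invoke the standard fact about connections on Riemannian product manifolds (cited, e.g., to \cite{boumal2020introduction}) or spell it out via the Koszul formula on a local product chart, checking that terms mixing an $x$-field and a $y$-field drop out. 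Everything else — the splitting of the gradient and of the metric — is routine and follows immediately from the product structure introduced in the Preliminaries.
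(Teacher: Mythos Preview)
Your proposal is correct and follows essentially the same route as the paper: both arguments compute $\hess f(p)[\xi] = \bnabla_\xi \grad f$ by invoking the product structure of the Levi--Civita connection on $\M_x \times \M_y$ (the paper cites \cite[Exercise~5.4]{boumal2020introduction} for exactly the formula $\bnabla_{(U_x,U_y)}(V_x,V_y) = (\bnabla^{(x)}_{U_x}V_x + \D_y V_x[U_y],\, \D_x V_y[U_x] + \bnabla^{(y)}_{U_y}V_y)$ that you spell out), then apply it to $\grad f = (\grad_x f, \grad_y f)$ and identify the cross terms with the defined $\grad^2_{yx}$, $\grad^2_{xy}$. Your additional discussion of why no mixed Christoffel corrections appear is more explicit than the paper's, but the underlying argument is the same.
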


\begin{proof}
From standard analysis, the Levi-Civita connection on a product manifold $\M = \M_x \times \M_y$ (e.g., in \cite[Exercise~5.4]{boumal2020introduction}) is given by 
\begin{equation*}
    \bnabla_{(U_x, U_y)}(V_x, V_y) = \Big( \bnabla^{(x)}_{U_x}V_x +  \D_y V_x [U_y], \, \D_x V_y [U_x] + \bnabla^{(y)}_{U_y} V_y \Big),
\end{equation*}
where $V_x \in \mathfrak{X}(\M_x), V_y \in \mathfrak{X}(\M_y)$ are vector fields on respective manifolds and $\D$ is the directional derivative. Further, $\D_y V_x: \mathfrak{X}(\M_y) \xrightarrow{} \mathfrak{X}(\M_x)$ and when evaluating at $(x,y)$, this is equivalently defined as $\D_y V_x(x, \cdot)(y) : T_y\M_y \xrightarrow{} T_x\M_x$, which is the directional derivative. $\bnabla^{(x)}, \bnabla^{(y)}$ are the Levi-Civita connections on $\M_x, \M_y$, respectively. Applying the definition of the Riemannian Hessian, $\hess f(p)[\xi] = \bnabla_\xi \grad f(p)$, we obtain the desired result.
\end{proof}


\begin{proposition}
\label{symmetry_cross_derivative_prop}
For any $(x,y) \in \M_x \times \M_y$ and $(u,v) \in T_{x}\M_x \times T_y\M_y$, we have $\langle \grad^2_{yx}f(x,y) [v], u \rangle_x \allowbreak = \langle \grad_{xy}^2 f(x,y)[u], v\rangle_y$. Equivalently, $\grad^2_{yx} f(x,y)$ is the adjoint operator of $\grad^2_{xy} f(x,y)$. 
\end{proposition}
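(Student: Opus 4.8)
The plan is to deduce the adjointness of the cross derivatives from the self-adjointness of the full Riemannian Hessian on the product manifold $\M = \M_x\times\M_y$, together with the block expression already established in Proposition~\ref{riem_hess_prop}. Concretely, for $p=(x,y)$ the operator $\hess f(p):T_p\M\xrightarrow{}T_p\M$ is a self-adjoint linear operator with respect to the product metric — this is the general fact recalled in the preliminaries, which rests on the Levi-Civita connection being metric-compatible and torsion-free. I would test this symmetry against the pair of tangent vectors $(u,0)$ and $(0,v)$ with $u\in T_x\M_x$, $v\in T_y\M_y$, i.e. use $\langle \hess f(p)[(u,0)], (0,v)\rangle_p = \langle (u,0), \hess f(p)[(0,v)]\rangle_p$, and substitute the block form
\begin{equation*}
\hess f(p)[(u,v)] = \big(\hess_x f(x,y)[u] + \grad^2_{yx} f(x,y)[v],\ \grad^2_{xy} f(x,y)[u] + \hess_y f(x,y)[v]\big)
\end{equation*}
from Proposition~\ref{riem_hess_prop}. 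Then the left-hand side collapses to $\langle \grad^2_{xy} f(x,y)[u], v\rangle_y$ and the right-hand side to $\langle u, \grad^2_{yx} f(x,y)[v]\rangle_x$ (the block-diagonal terms $\hess_x f$ and $\hess_y f$ contribute nothing because $(u,0)$ and $(0,v)$ lie in complementary factors). Since $u,v$ are arbitrary, this is exactly the claimed identity $\langle \grad^2_{yx} f(x,y)[v], u\rangle_x = \langle \grad^2_{xy} f(x,y)[u], v\rangle_y$, equivalently $\grad^2_{yx} f(x,y) = (\grad^2_{xy} f(x,y))^{*}$.

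If one prefers an argument that does not invoke self-adjointness of the whole Hessian, I would instead work in a coordinate chart on $\M_x\times\M_y$ and write $\grad_x f = G_x^{-1}\nabla_x f$, $\grad_y f = G_y^{-1}\nabla_y f$, where $G_x=G_x(x)$ and $G_y=G_y(y)$ are the matrix representations of the metrics on $\M_x$ and $\M_y$. The structural point that makes this work is that $G_x$ depends only on $x$ and $G_y$ only on $y$ — this is precisely why the mixed blocks of the product connection in the proof of Proposition~\ref{riem_hess_prop} are plain directional derivatives, with no Christoffel correction. Hence $\grad^2_{yx} f(x,y)[v] = G_x^{-1}(\nabla_y\nabla_x f)[v]$ and $\grad^2_{xy} f(x,y)[u] = G_y^{-1}(\nabla_x\nabla_y f)[u]$, so after cancelling $G_x^{-1}G_x$ and $G_y^{-1}G_y$ both inner products reduce to the ordinary mixed second derivative, and $\langle \grad^2_{yx} f(x,y)[v], u\rangle_x = \langle \nabla_y\nabla_x f[v], u\rangle_2 = \langle \nabla_x\nabla_y f[u], v\rangle_2 = \langle \grad^2_{xy} f(x,y)[u], v\rangle_y$ by Schwarz's theorem.

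The content here is essentially bookkeeping, so there is no genuine obstacle; the one point that deserves care is the reduction of the cross terms — in the Hessian-symmetry route one must confirm that the block-diagonal $\hess_x f$, $\hess_y f$ terms drop out, and in the coordinate route one must be sure no extra connection terms sneak into the mixed partial, which is guaranteed by the product structure of the metric. Either way the identity follows immediately once the block form of Proposition~\ref{riem_hess_prop} is available, so this proposition is effectively a corollary of that one.
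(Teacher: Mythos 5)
Your main argument is correct and rests on the same two ingredients as the paper's proof: the self-adjointness of $\hess f(p)$ on the product manifold and the block decomposition from Proposition~\ref{riem_hess_prop}. The difference is in how the conclusion is extracted. The paper tests the symmetry identity $\langle \hess f(p)[\xi], \zeta \rangle_p = \langle \hess f(p)[\zeta], \xi \rangle_p$ against \emph{arbitrary} pairs $\xi=(u,v)$, $\zeta=(w,z)$, which leaves a four-term identity in which the cross terms are entangled, and then needs an additional vectorization argument with the metric tensors $\bG_x, \bG_y$ to deduce $\bG_y \bB_{xy} = \bB_{yx}^\top \bG_x$. Your choice of the complementary test vectors $(u,0)$ and $(0,v)$ kills the block-diagonal contributions of $\hess_x f$ and $\hess_y f$ outright and collapses the identity to exactly the claimed equality in one line, so the entire extraction step of the paper becomes unnecessary. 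This is a genuine streamlining of the same proof, not a different proof. Your alternative coordinate-chart argument is also sound as stated, and you correctly flag the one point it hinges on (the product metric gives $G_x = G_x(x)$, $G_y = G_y(y)$, so the mixed blocks carry no Christoffel correction and Schwarz's theorem applies to the raw mixed partials); it is a reasonable independent check but adds nothing the first argument does not already deliver.
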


\begin{proof}
Let $p = (x,y)$ and $\xi = (u,v), \zeta = (w,z)$ for any $(u,v), \allowbreak (w,z) \in T_x\M_x \times T_y\M_y$. Then, from the self-adjoint property (symmetry) of the Riemannian Hessian, we have 
\begin{equation}
    \langle \hess f(p)[\xi], \zeta \rangle_p = \langle \hess f(p)[\zeta], \xi \rangle_p, \label{symmetric_prop_eq1}
\end{equation}
for any $\xi, \zeta$. Combining with Proposition \ref{riem_hess_prop}, the result \eqref{symmetric_prop_eq1} is equivalent to 
\begin{align*}
    &\langle \hess_x f(x,y)[u], w \rangle_x + \langle \grad^2_{yx} f(x,y)[v], w \rangle_x +  \langle \grad^2_{xy} f(x,y) [u], z  \rangle_y \\
    &\qquad + \langle \hess_y f(x,y)[v], z \rangle_y \\
    = &\langle \hess_x f(x,y)[w], u \rangle_x + \langle \grad^2_{yx} f(x,y)[z], u\rangle_x + \langle \grad^2_{xy} f(x,y)[w], v \rangle_y \\
    &\qquad + \langle \hess_y f(x,y)[z], v \rangle_y.
\end{align*}
Given that $\hess_x$ and $\hess_y$ satisfy the self-adjoint property, we obtain 
\begin{align}
    &\langle \grad^2_{yx} f(x,y)[v], w \rangle_x + \langle \grad^2_{xy} f(x,y) [u], z  \rangle_y \nonumber\\
    = &\langle \grad^2_{yx} f(x,y)[z], u\rangle_x + \langle \grad^2_{xy} f(x,y)[w], v \rangle_y. \label{sym_prop_eq2}
\end{align}
We can see \eqref{sym_prop_eq2} holds for any choice of $(u,v), (w,z)$ and this only happens when $\langle \grad^2_{yx}f(x,y) [v], u \rangle_x = \langle \grad_{xy}^2 f(x,y)[u], v\rangle_y$ holds for any $(u,v)$. To see this, consider the vectorization of the tangent vectors as $\bu, \bv, \bw, \bz$. We also denote $\bB_{xy}, \bB_{yx}$ as the matrix representation of the linear operators $\grad^2_{xy}f(x,y), \grad^2_{yx}f(x,y)$ at $(x,y)$ respectively. Then \eqref{sym_prop_eq2} can be rewritten as 
\begin{align*}
    \bw^\top \bG_x \bB_{yx} \bv + \bz^\top \bG_y \bB_{xy} \bu = \bu^\top \bG_x \bB_{yx} \bz + \bv^\top \bG_y \bB_{xy} \bw,
\end{align*}
where $\bG_x, \bG_y$ are the (symmetric positive definite) metric tensors at $x,y$. This is equivalent to 
\begin{equation*}
    \bz^\top \big( \bG_y \bB_{xy} - \bB_{yx}^\top \bG_x  \big) \bu = \bv^\top \big( \bG_y \bB_{xy} - \bB_{yx}^\top \bG_x \big) \bw,
\end{equation*}
which is satisfied for any $\bu, \bv, \bw, \bz$ and any $\bG_x, \bG_y$ as metric tensors. Hence, $\bG_y \bB_{xy} = \bB_{yx}^\top \bG_x$ and the proof is complete.
\end{proof}

\begin{remark}
Proposition \ref{symmetry_cross_derivative_prop} shows that the Riemannian cross derivatives are symmetric with respect to Riemannian metric on respective manifolds. When $\M_x$, $\M_y$ are the Euclidean spaces, then Proposition \ref{symmetry_cross_derivative_prop} is equivalent to the Schwarz's theorem of symmetric second-order derivatives.
\end{remark}

\section{Essential lemmas}
\label{essential_lemma_appendix}

The following lemmas generalize \cite[Lemmas 17, 28]{abernethy2019last} to linear operators, specifically in terms of the Riemannian Hessian operator. We first highlight that for two operators $T$, $T^*$ that are adjoint, we have $\lambda(T \circ T^*) = \lambda(T^* \circ T ) = \sigma^2(T) =\sigma^2(T^*)$. 


\begin{lemma}
\label{lemma_hess_bound_linear}
Consider the Riemannian Hessian $\hess f(p)$ where $p = (x,y) \in \M_x \times \M_y$. Suppose $\hess_y f(x,y) = 0$. Then, $\lambda_{\rm |min|}(\hess f(p)) \allowbreak \geq \frac{\sigma^2_{\min}(B_{xy})}{ \sqrt{2\sigma^2_{\min} (B_{xy}) + \| H_x\|^2_x}}$.
\end{lemma}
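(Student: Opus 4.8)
The plan is to bound $|\lambda|$ from below for \emph{every} eigenvalue $\lambda$ of $\hess f(p)$; since $\lambda_{\rm |min|}(\hess f(p))$ is the smallest such magnitude, this proves the claim. Write $B_{xy} := \grad^2_{xy}f(x,y)$, $B_{yx} := \grad^2_{yx}f(x,y)$, $H_x := \hess_x f(x,y)$, $\sigma := \sigma_{\min}(B_{xy})$, and $h := \| H_x \|_x$. By Proposition~\ref{riem_hess_prop}, since $\hess_y f(x,y) = 0$, the Riemannian Hessian on $\M = \M_x\times\M_y$ acts on $(u,v)\in T_x\M_x\times T_y\M_y$ by $\hess f(p)[(u,v)] = \big( H_x[u] + B_{yx}[v],\ B_{xy}[u] \big)$, so an eigenpair $(\lambda,(u,v))$ satisfies $H_x[u] + B_{yx}[v] = \lambda u$ and $B_{xy}[u] = \lambda v$. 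If $\lambda = 0$, the second equation together with $\sigma > 0$ (which makes $B_{xy}$ injective) forces $u = 0$; the degenerate eigenvectors can be dispatched separately, so henceforth I assume $\lambda \neq 0$ and $u \neq 0$.

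Next I would eliminate $v$. From $B_{xy}[u] = \lambda v$ we get $v = \lambda^{-1} B_{xy}[u]$; substituting into the first equation and multiplying by $\lambda$ yields $B_{yx}B_{xy}[u] = \lambda^2 u - \lambda H_x[u]$. Taking the inner product with $u$ on $\M_x$ and using that $B_{yx}$ is the adjoint of $B_{xy}$ (Proposition~\ref{symmetry_cross_derivative_prop}), the left-hand side becomes $\langle B_{xy}[u], B_{xy}[u]\rangle_y = \| B_{xy}[u] \|_y^2$, giving
$$\| B_{xy}[u] \|_y^2 = \lambda^2 \| u \|_x^2 - \lambda \langle H_x[u], u\rangle_x .$$
Since the eigenvalues of $B_{xy}^{*}\circ B_{xy}$ are precisely the squared singular values of $B_{xy}$, the left-hand side is at least $\sigma^2 \| u \|_x^2$, while self-adjointness of $H_x$ gives $|\lambda \langle H_x[u], u\rangle_x| \le |\lambda|\, h\, \| u \|_x^2$. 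Dividing through by $\| u \|_x^2 > 0$ leaves the scalar inequality $\sigma^2 \le \lambda^2 + h\,|\lambda|$.

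Finally, I would convert this quadratic inequality in $|\lambda|$ into the stated bound. Solving $|\lambda|^2 + h\,|\lambda| - \sigma^2 \ge 0$ gives $|\lambda| \ge \frac{1}{2}\big(-h + \sqrt{h^2 + 4\sigma^2}\,\big) = \frac{2\sigma^2}{\,h + \sqrt{h^2 + 4\sigma^2}\,}$ after rationalizing the numerator. It then suffices to verify the elementary inequality $h + \sqrt{h^2 + 4\sigma^2} \le 2\sqrt{2\sigma^2 + h^2}$, which follows by squaring twice and reducing to $0 \le 4\sigma^4$; combining the two facts gives $|\lambda| \ge \sigma^2 / \sqrt{2\sigma^2 + h^2}$. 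As this holds for every eigenvalue, the same bound holds for $\lambda_{\rm |min|}(\hess f(p))$, which is exactly the assertion.

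The algebra here is routine; the one point genuinely needing care is the degenerate case $\lambda = 0$ (equivalently $u = 0$) — one must use the hypothesis $\sigma_{\min}(B_{xy}) > 0$ to ensure the relevant eigenvectors have $u \neq 0$, so that the division by $\| u \|_x^2$ in the decisive step is legitimate; this is precisely where the positivity assumption on the cross derivative used in the applications (e.g.\ Proposition~\ref{nonconvex_g_linear_prop}) enters.
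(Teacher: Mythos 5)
Your argument is correct and reaches the stated bound, but by a genuinely different and more elementary route than the paper. The paper analyzes the squared operator $\hess f(p)\circ\hess f(p)$: it takes an eigenpair $(\delta,(u,v))$ of that operator, eliminates $v$ by inverting $B_{xy}\circ B_{yx}-\delta\,\id$, and then shows that for $\delta$ below the claimed threshold the resulting operator equation in $u$ has no nontrivial solution, via a root analysis of a scalar quadratic. You instead work with an eigenpair $(\lambda,(u,v))$ of $\hess f(p)$ itself, eliminate $v$ through the trivial relation $v=\lambda^{-1}B_{xy}[u]$, and pair with $u$ to reduce everything to the single scalar inequality $\sigma^2\le\lambda^2+h|\lambda|$. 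This avoids all operator inversions, and it in fact yields the sharper intermediate bound $|\lambda|\ge\tfrac{1}{2}\big(\sqrt{h^2+4\sigma^2}-h\big)$, which is tight in the purely bilinear case $H_x=0$ (there $\lambda_{\rm |min|}(\hess f(p))=\sigma_{\min}(B_{xy})$ exactly, whereas the paper's constant only gives $\sigma_{\min}(B_{xy})/\sqrt{2}$); your final elementary inequality $h+\sqrt{h^2+4\sigma^2}\le 2\sqrt{2\sigma^2+h^2}$ then correctly weakens this to the stated constant. The individual steps check out: the identity $\langle B_{yx}B_{xy}[u],u\rangle_x=\|B_{xy}[u]\|_y^2$ is exactly Proposition~\ref{symmetry_cross_derivative_prop}, the lower bound $\|B_{xy}[u]\|_y^2\ge\sigma^2\|u\|_x^2$ is the definition of $\sigma_{\min}$, and the quadratic manipulation is sound.

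The one loose end is the case $\lambda=0$, which you flag but do not actually dispatch. If $\lambda=0$ then indeed $u=0$, but the eigenvector condition then only requires $B_{yx}[v]=0$ with $v\neq 0$; such a $v$ exists whenever $B_{yx}=\grad^2_{yx}f(x,y)$ has a nontrivial kernel (for instance when $\dim\M_y>\dim\M_x$), in which case $0$ genuinely is an eigenvalue of $\hess f(p)$ and the conclusion fails for $\sigma_{\min}(B_{xy})>0$. You should state explicitly that injectivity of $B_{xy}$ together with the implicit assumption $\dim\M_x=\dim\M_y$ makes $B_{yx}$ injective as well, so that $0$ is not an eigenvalue. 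To be fair, the paper's proof carries the same hidden assumption (its invertibility claim for $B_{xy}\circ B_{yx}-\delta\,\id$ fails at $\delta=0$ when $B_{yx}$ has a kernel), so this is a defect of the lemma as stated rather than of your argument specifically.
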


\begin{proof}
We consider the operator $\hess f(p) \circ \hess f(p)$ and study its eigenvalue. First, we see {that} for any $p =(x,y) \in \M_x \times \M_y$ and $\xi = (u, v) \in T_x\M_x \times T_y \M_y$, we have 
%
%
\begin{align*}
    \hess f(p) [\xi] &= \begin{pmatrix}
                \hess_x f(x,y)[u] + \grad_{yx}^2 f(x,y)[v]  \\
                \grad_{xy}^2 f(x,y)[u]
                \end{pmatrix}, 
\end{align*}
and therefore,
\begin{align*}
     &\hess f(p) [\hess f(p)[\xi]]  \\
     &= \begin{pmatrix} \hess_x f(x,y)[ \hess_x f(x,y)[u]] + \hess_x f(x,y)[\grad^2_{yx}f(x,y)[v]] \\
      + \grad^2_{yx} f(x,y) [\grad^2_{xy} f(x,y)[u]] \\[5pt]
     \grad^2_{xy} f(x,y)[\hess_x f(x,y) [u]] + \grad^2_{xy}f(x,y) [\grad^2_{yx}f(x,y)[v]]
     \end{pmatrix}.
\end{align*}
%
%
Suppose $(\delta, \xi)$ is an eigenpair of the operator $\hess f(p) \circ \hess f(p)$, which gives
\begin{align}
    &\hess_x f(x,y)[ \hess_x f(x,y)[u]] + \hess_x f(x,y)[\grad^2_{yx}f(x,y)[v]]  \nonumber\\ 
     &\hspace{4.5cm} + \grad^2_{yx} f(x,y) [\grad^2_{xy} f(x,y)[u]]  = \delta  u, \label{lemma_2a_eq1} \\
    & \grad^2_{xy} f(x,y)[\hess_x f(x,y) [u]] + \grad^2_{xy}f(x,y) [\grad^2_{yx}f(x,y)[v]] = \delta v. \label{lemma_2a_eq2}
\end{align}
Let $B_{xy} = \grad^2_{xy} f(x,y), B_{yx} = \grad^2_{yx} f(x,y)$, and $H_x = \hess_x f(x,y)$. Suppose $\delta < \frac{\sigma^4_{\min} (B_{xy})}{2\sigma^2_{\min}(B_{xy}) + \| H_x\|^2_x} \allowbreak < \sigma^2_{\min} (B_{xy})$. Then, we have $B_{xy} \circ B_{yx} - \delta \, \id$ is invertible where we use the fact that $B_{xy}$ and $B_{yx}$ are adjoint. 
%
Hence, from \eqref{lemma_2a_eq2} we have $v = - (B_{xy} \circ B_{yx} - \delta \, \id)^{-1} \circ (B_{xy} \circ H_x) [u]$. Substituting the expression of $v$ 
%
%
into \eqref{lemma_2a_eq1} yields
\begin{equation}
    \Big(H_x \circ \big( \id -  B_{yx} \circ (B_{xy} \circ B_{yx} - \delta \, \id)^{-1} \circ B_{xy}  \big) \circ H_x   + B_{yx} \circ B_{xy} - \delta \, \id \Big) [u] = 0. \label{lemma_2a_eq3}
\end{equation}
We next show that when 
\begin{equation}
    \delta < \frac{\sigma^4_{\min} (B_{xy})}{2\sigma^2_{\min}(B_{xy}) + \| H_x\|^2_x} < \sigma^2_{\min} (B_{xy}), \label{delta_condition}
\end{equation}
then \eqref{lemma_2a_eq3} does not have a nontrivial solution in $u$ (i.e., $u \neq 0$), which leads to a contradiction that $\xi$ is an eigenvector.
%
%
It suffices to show that for any $\delta$ satisfying the condition \eqref{delta_condition}, 
%
%
the following inequality 
\begin{equation}
     \frac{- \delta \| H_x\|_x^2}{\sigma_{\min}^2(B_{xy}) - \delta}+ \sigma_{\min}^2 (B_{xy}) - \delta > 0, \label{lemma_2a_eq4}
\end{equation}
holds, which violates \eqref{lemma_2a_eq3}. 
%
%
Here, we highlight $B_{xy}$ is the adjoint of $B_{yx}$, and therefore, the eigenvalues $\lambda_i(\id - B_{yx} \circ (B_{xy} \circ B_{yx} - \delta \, \id)^{-1} \circ B_{xy}) =  \frac{- \delta}{\sigma^2_i(B_{xy}) - \delta} < 0$ from the singular value decomposition of $B_{xy}$. The roots of \eqref{lemma_2a_eq4} are
\begin{align*}
    r_1 &= \sigma^2_{\min}(B_{xy}) + \frac{1}{2}  \|H_x \|_x^2 - \sqrt{(\sigma^2_{\min}(B_{xy}) + \frac{1}{2}  \|H_x \|_x^2)^2 - \sigma^4_{\min}(B_{xy})} \\
    r_2 &= \sigma^2_{\min}(B_{xy}) + \frac{1}{2}  \|H_x \|_x^2 + \sqrt{(\sigma^2_{\min}(B_{xy}) + \frac{1}{2}  \|H_x \|_x^2)^2 - \sigma^4_{\min}(B_{xy})}.
\end{align*}
One can show for any $c_1 > 0$, $4c_2 < c_1^2$, then $\frac{2c_2}{c_1} < c_1 - \sqrt{c_1^2 - 4c_2}$. Let $c_1 =  \sigma^2_{\min} (B_{xy}) + \frac{1}{2}\| H_x\|^2_x$, $c_2 = \frac{1}{4} \sigma^4_{\min}(B_{xy})$, we have the smaller root satisfies $r_1 > \frac{\sigma^4_{\min}(B_{xy})}{2\sigma^2_{\min} (B_{xy}) + \| H_x\|^2_x} > \delta$, 
%
%
%
Hence, there 
%
%
does not exist $u \neq 0$ that satisfies \eqref{lemma_2a_eq3}, which implies $\delta \geq \frac{\sigma^4_{\min}(B_{xy})}{2\sigma^2_{\min} (B_{xy}) + \| H_x\|^2_x}$. This completes the proof.
\end{proof}

\begin{lemma}
\label{lemma_sufficient_linear_H}
Consider the Riemannian Hessian $\hess f(p)$, where $p = (x,y) \in \M_x \times \M_y$. Let $H_x := \hess_x f(x,y), H_y := \hess_y f(x,y)$, $B_{xy} := \grad^2_{xy} f(x,y)$, and
\begin{align*}
         a &= 2 \sigma_{\min}^2(B_{xy}) + \lambda^2_{{\rm |min|}}(H_x) + \lambda^2_{{\rm |min|}}(H_y), \\ 
         b &= \Big(\sigma_{\min}^2(B_{xy}) + \lambda^2_{{\rm |min|}}(H_x) \Big) \Big(\sigma_{\min}^2(B_{xy})  + \lambda^2_{{\rm |min|}}(H_y) \Big)  \\
         &\qquad \qquad - \sigma_{\max}^2(B_{xy})(\| H_x \|_x + \| H_y \|_y)^2.
\end{align*}
Suppose that $b > 0$. Then, $\lambda_{\rm |min|}(\hess f(p)) \geq \sqrt{\frac{b}{a}}$.
\end{lemma}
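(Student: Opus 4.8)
The plan is to mirror the argument used for Lemma~\ref{lemma_hess_bound_linear}. I would work with the self-adjoint operator $M := \hess f(p)\circ\hess f(p)$ on $T_p\M = T_x\M_x\times T_y\M_y$; since $\hess f(p)$ is self-adjoint, the eigenvalues of $M$ are the squares of those of $\hess f(p)$, so $\lambda_{\min}(M) = \lambda_{\rm |min|}^2(\hess f(p))$, and it suffices to show that every eigenvalue $\delta$ of $M$ obeys $\delta\ge b/a$. Abbreviate $P := \sigma_{\min}^2(B_{xy}) + \lambda_{\rm |min|}^2(H_x)$, $Q := \sigma_{\min}^2(B_{xy}) + \lambda_{\rm |min|}^2(H_y)$ and $R := \sigma_{\max}^2(B_{xy})(\|H_x\|_x + \|H_y\|_y)^2$, so that $a = P+Q$ and $b = PQ - R$. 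A first elementary remark is that $b/a = (PQ-R)/(P+Q)\le PQ/(P+Q)\le\min\{P,Q\}$, so any eigenvalue $\delta<b/a$ automatically satisfies $\delta<P$ and $\delta<Q$; this is what makes the relevant blocks invertible below.

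By Proposition~\ref{riem_hess_prop} and the adjoint relation $B_{yx}=B_{xy}^{*}$ of Proposition~\ref{symmetry_cross_derivative_prop}, $\hess f(p)$ is the block operator with diagonal blocks $H_x,H_y$ and off-diagonal blocks $B_{xy},B_{yx}$, hence
\begin{equation*}
    M = \begin{pmatrix} H_x^2 + B_{yx}B_{xy} & C^{*} \\ C & B_{xy}B_{yx} + H_y^2 \end{pmatrix},\qquad C := B_{xy}H_x + H_yB_{xy}.
\end{equation*}
Suppose, for contradiction, that $(\delta,\xi)$ is an eigenpair of $M$ with $\xi=(u,v)\neq 0$ and $\delta<b/a$. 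Since $B_{yx}B_{xy}=B_{xy}^{*}\circ B_{xy}$ has smallest eigenvalue $\sigma_{\min}^2(B_{xy})$ and $H_x^2$ has smallest eigenvalue $\lambda_{\rm |min|}^2(H_x)$, Weyl's inequality for sums of positive semidefinite operators gives $\lambda_{\min}(H_x^2+B_{yx}B_{xy})\ge P>\delta$, so $K:=H_x^2+B_{yx}B_{xy}-\delta\,\id$ is positive definite, hence invertible. The first block of $M\xi=\delta\xi$ then gives $u = -K^{-1}C^{*}[v]$; substituting into the second block yields $(B_{xy}B_{yx}+H_y^2-\delta\,\id)[v] = CK^{-1}C^{*}[v]$, and $v\neq 0$ (otherwise $u=0$ too). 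Pairing with $v$ and using $\|B_{yx}v\|_x^2=\langle B_{xy}B_{yx}v,v\rangle_y$,
\begin{equation*}
    \|B_{yx}v\|_x^2 + \|H_y v\|_y^2 - \delta\,\|v\|_y^2 \;=\; \langle K^{-1}C^{*}v,\, C^{*}v\rangle_x .
\end{equation*}

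Then I would estimate each term. On the left, $\|H_yv\|_y^2\ge\lambda_{\rm |min|}^2(H_y)\|v\|_y^2$ and, using the identification $\lambda(B_{xy}B_{yx})=\lambda(B_{yx}B_{xy})=\sigma^2(B_{xy})$ for the adjoint pair $(B_{xy},B_{yx})$, $\|B_{yx}v\|_x^2\ge\sigma_{\min}^2(B_{xy})\|v\|_y^2$, so the left side is $\ge(Q-\delta)\|v\|_y^2$. On the right, $\langle K^{-1}C^{*}v,C^{*}v\rangle_x\le\lambda_{\min}(K)^{-1}\|C^{*}v\|_x^2\le(P-\delta)^{-1}\|C^{*}v\|_x^2$, and since $C^{*}=H_xB_{yx}+B_{yx}H_y$ and $\sigma_{\max}(B_{yx})=\sigma_{\max}(B_{xy})$, the triangle inequality gives $\|C^{*}v\|_x\le\sigma_{\max}(B_{xy})(\|H_x\|_x+\|H_y\|_y)\|v\|_y$, i.e.\ $\|C^{*}v\|_x^2\le R\|v\|_y^2$. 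Combining and dividing by $\|v\|_y^2>0$ gives $Q-\delta\le R/(P-\delta)$, that is $(P-\delta)(Q-\delta)\le R$, which rearranges to the quadratic inequality $\delta^2 - a\delta + b\le 0$. Its discriminant $a^2-4b = (P-Q)^2+4R\ge 0$, so $\delta$ lies between the two real roots; in particular $\delta\ge\frac{a-\sqrt{a^2-4b}}{2} = \frac{2b}{a+\sqrt{a^2-4b}} > \frac{2b}{2a} = \frac{b}{a}$, the last inequality because $b>0$ forces $\sqrt{a^2-4b}<a$. This contradicts $\delta<b/a$, so $\lambda_{\rm |min|}^2(\hess f(p))=\lambda_{\min}(M)\ge b/a$, proving the claim.

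I expect the main obstacle to be the bookkeeping in the two coupled block equations: deciding which block to eliminate (the $(1,1)$-block, whose smallest eigenvalue $P$ exceeds $\delta$ thanks to the $b/a\le\min\{P,Q\}$ observation combined with the positive-semidefinite Weyl bound) and then justifying the two spectral estimates $\|B_{yx}v\|_x^2\ge\sigma_{\min}^2(B_{xy})\|v\|_y^2$ and $\|C^{*}v\|_x\le\sigma_{\max}(B_{xy})(\|H_x\|_x+\|H_y\|_y)\|v\|_y$, where the adjointness of $B_{xy},B_{yx}$ and the coincidence of the spectra of $T\circ T^{*}$ and $T^{*}\circ T$ (the remark preceding Lemma~\ref{lemma_hess_bound_linear}) are what is really being used. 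The closing root comparison $\frac{a-\sqrt{a^2-4b}}{2}\ge\frac{b}{a}$ is exactly the elementary estimate ("$4c_2<c_1^2\Rightarrow 2c_2/c_1<c_1-\sqrt{c_1^2-4c_2}$") already exploited at the end of the proof of Lemma~\ref{lemma_hess_bound_linear}.
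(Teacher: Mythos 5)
Your proposal is correct and follows essentially the same argument as the paper: pass to the self-adjoint operator $\hess f(p)\circ\hess f(p)$, assume an eigenpair with $\delta < b/a$, use invertibility of a diagonal block (guaranteed by $b/a\le\min\{P,Q\}$ and Weyl's inequality) to eliminate one component, and derive a contradiction from the quadratic root comparison $\frac{b}{a}<\frac{a-\sqrt{a^2-4b}}{2}$. The only cosmetic differences are that you eliminate $u$ via the $(1,1)$-block where the paper eliminates $v$ via the $(2,2)$-block, and that you conclude by pairing with $v$ to get a scalar inequality rather than asserting positive-definiteness of the reduced operator.
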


\begin{proof}
Similarly to Lemma \ref{lemma_hess_bound_linear}, we consider the operator $\hess f(p) \circ \hess f(p)$, i.e.,
\begin{align*}
    \hess f(p) [\xi] &= \begin{pmatrix}
                \hess_x f(x,y)[u] + \grad_{yx}^2 f(x,y)[v]  \\
                \hess_y f(x,y)[v] + \grad_{xy}^2 f(x,y)[u]
                \end{pmatrix}, 
\end{align*}
and
\begin{align*}
     &\hess f(p) [\hess f(p)[\xi]]  \\
     &= \begin{pmatrix} \hess_x f(x,y)[ \hess_x f(x,y)[u]] + \hess_x f(x,y)[\grad^2_{yx}f(x,y)[v]] \\
      + \grad^2_{yx} f(x,y) [\hess_y f(x,y) [v]] + \grad^2_{yx} f(x,y) [\grad^2_{xy} f(x,y)[u]] \\[5pt]
     \hess_y f(x,y)[\hess_y f(x,y)[v]] + \hess_y f(x,y)[\grad^2_{xy} f(x,y)[u]] \\
     + \grad^2_{xy} f(x,y)[\hess_x f(x,y) [u]] + \grad^2_{xy}f(x,y) [\grad^2_{yx}f(x,y)[v]].
     \end{pmatrix}.
\end{align*}
Suppose $(\delta, \xi)$ is an eigenpair of the operator $\hess f(p) \circ \hess f(p)$, which gives
\begin{align}
     &\hess_x f(x,y)[ \hess_x f(x,y)[u]] + \hess_x f(x,y)[\grad^2_{yx}f(x,y)[v]]  \nonumber\\ 
     &\; + \grad^2_{yx} f(x,y) [\hess_y f(x,y) [v]] + \grad^2_{yx} f(x,y) [\grad^2_{xy} f(x,y)[u]]  = \delta  u, \label{lemma_la_eq1} \\
    &\hess_y f(x,y)[\hess_y f(x,y)[v]] + \hess_y f(x,y)[\grad^2_{xy} f(x,y)[u]] \nonumber\\
    &\; + \grad^2_{xy} f(x,y)[\hess_x f(x,y) [u]] + \grad^2_{xy}f(x,y) [\grad^2_{yx}f(x,y)[v]] = \delta v. \label{lemma_la_eq2}
\end{align}
Denote $T_x := H_x \circ H_x  + B_{yx} \circ B_{xy} - \delta \, \id$ and similarly for $T_y := H_y \circ H_y + B_{xy} \circ B_{yx} -\delta \, \id$, where $H_x = \hess_x f(x,y)$, $H_y = \hess_y f(x,y)$ and $B_{xy} = \grad^2_{xy} f(x,y)$, $B_{yx} = \grad^2_{yx} f(x,y)$. Then, we can simplify \eqref{lemma_la_eq1} and \eqref{lemma_la_eq2} as 
\begin{equation}
\begin{array}{ll}
     T_x [u] &= - (H_x \circ B_{yx} + B_{yx} \circ H_y) [v] \\
     T_y [v] &= - (H_y \circ B_{xy} + B_{xy} \circ H_x) [u]
\end{array}
\label{lemma_la_eq0}
\end{equation}
Suppose $\delta < \frac{b}{a}$. Then, we can show $T_y$ is invertible. This is because, for any $c_1 > 0$, $4c_2 < c_1^2$, we have $\frac{2c_2}{c_1} < c_1 - \sqrt{c_1^2 - 4c_2}$. From the definition of $a$ and $b$ and setting $c_1 = a, c_2 = b$, we have 
\begin{align*}
    \frac{2b}{a} &< 2 \sigma^2_{\min} (B_{xy}) + \lambda_{\min}(H_x \circ H_x) + \lambda_{\min}(H_y \circ H_y) \\
    &\quad - \sqrt{(\lambda_{\min}(H_x \circ H_x) - \lambda_{\min}(H_y \circ H_y))^2 + 4 \sigma_{\max}^2(B_{xy})(\| H_x \|_x + \| H_y \|_y)^2 } \\
    &< 2 \sigma^2_{\min} (B_{xy}) + \lambda_{\min}(H_x \circ H_x) + \lambda_{\min}(H_y \circ H_y) \\
    &\quad - | \lambda_{\min}(H_x \circ H_x) - \lambda_{\min}(H_y \circ H_y) | \\
    &\leq 2 \sigma^2_{\min} (B_{xy}) + 2 \lambda_{\min}(H_y \circ H_y),
\end{align*}
where we emphasize that $B_{yx}$ is the adjoint to $B_{xy}$ and hence $\lambda(B_{yx} \circ B_{xy}) = \lambda(B_{xy} \circ B_{yx} ) = \sigma^2(B_{xy}) =\sigma^2(B_{yx})$.

Hence, $\delta < \frac{b}{a}<  \sigma^2_{\min} (B_{xy}) + \lambda_{\min}(H_y \circ H_y)$ and $T_y = H_y \circ H_y + B_{xy} \circ B_{yx} - \delta \, \id$ is invertible, because $\lambda_{\min}(T_y) \geq \sigma^2_{\min} (B_{xy}) + \lambda_{\min}(H_y \circ H_y) - \delta >0$ by Weyl's inequality.
%
%
Thus, \eqref{lemma_la_eq0} gives $v = - T_y^{-1} \circ (H_y \circ B_{xy} + B_{xy} \circ H_x) [u]$. Substituting this expression for $v$ into the first equation of \eqref{lemma_la_eq0} yields
\begin{equation}
    \Big(T_x - \big(  H_x \circ B_{yx} + B_{yx} \circ H_y \big) \circ T_y^{-1} \circ \big( H_y \circ B_{xy} + B_{xy} \circ H_x \big) \Big) [u] = 0. \label{lemma_la_eq3}
\end{equation}
Nevertheless, we can verify when $\delta < \frac{b}{a}$, \eqref{lemma_la_eq3} does not have any nontrivial solution for $u$, which gives a contradiction. Specifically, we show the following inequality is always satisfied under the condition on $\delta$, 
\begin{align}
    (\lambda_{\min}(H_y \circ H_y) + \sigma_{\min}^2(B_{xy}) - \delta )^{-1} &\sigma_{\max}^2(B_{xy})  (\| H_x\|_x + \| H_y\|_y)^2 \nonumber\\
    &< \lambda_{\min}(H_x \circ H_x) + \sigma_{\min}^2(B_{xy}) - \delta, \label{lemma_la_eq4}
\end{align}
which violates \eqref{lemma_la_eq3} for any $u \neq 0,$ because \eqref{lemma_la_eq4} would imply that 
\begin{align*}
    \lambda_{\min}\left(\Big(T_x - \big(  H_x \circ B_{yx} + B_{yx} \circ H_y \big) \circ T_y^{-1} \circ \big( H_y \circ B_{xy} + B_{xy} \circ H_x \big) \Big)\right) > 0,
\end{align*}
subsequently \eqref{lemma_la_eq3} implies $u=0,$ hence, $\xi=0,$ a contradiction.
It remains to show that under $\delta < \frac{b}{a}$, \eqref{lemma_la_eq4} is satisfied. That is, the roots of \eqref{lemma_la_eq4} are given by $\frac{1}{2}(a \pm \sqrt{a^2 - 4b})$. We have shown that $\delta < \frac{b}{a} < \frac{1}{2}(a - \sqrt{a^2 -4b})$. This implies \eqref{lemma_la_eq4} is always satisfied and results in a contradiction. Hence, $\delta \geq \frac{b}{a}$, which completes the proof. 
\end{proof}

\section{Analysis of RHM with conjugate gradient and trust-region update steps}
\label{RHM_CG_TR}
We provide the details on convergence analysis of minimizing the Riemannian Hamiltonian with the Riemannian conjugate gradient and trust-region methods, i.e., we consider Algorithm \ref{RHM} with the update step $\xi(p_t)$ computed as conjugate gradient direction and trust-region step.

\subsection{RHM with conjugate gradient (RHM-CG)}

\begin{theorem}[Linear convergence of RHM-CG]
\label{linear_convergence_RHM_CG}
Under the same settings as in Theorem \ref{theorem_convergence_RHGD}, consider Algorithm \ref{RHM} with conjugate gradient direction $\xi(p_t)$ where $\beta_t$ (used in update) and $\eta_t$ are chosen such that $\langle \xi(p_t), -\grad \gH(p_t) \rangle \geq c \| \grad\gH(p_t)\|^2_{p_t}$ for some $c > 0$ and the Armijo condition (Definition \ref{def_linesearch_condition}) is satisfied. Let $\tilde{\eta} = \min_{i = 0,...,t} \eta_i$. Then, iterates $p_t$ satisfy $\| \grad f(p_t) \|^2_{p_t} \leq (1 - 2 r_1 \tilde{\eta} c\delta)^t \| \grad f(p_0) \|^2_{p_0}$.
\end{theorem}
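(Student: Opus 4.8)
The plan is to follow the same template as the proof of Theorem~\ref{theorem_convergence_linesearch}, the only new ingredient being that the steepest-descent direction is replaced by a conjugate gradient direction $\xi(p_t)$ that, by hypothesis, satisfies the angle (sufficient-descent) condition $\langle \xi(p_t), -\grad\gH(p_t)\rangle_{p_t} \geq c\,\|\grad\gH(p_t)\|^2_{p_t}$ together with the Armijo decrease condition. The three facts to be chained together are: (i) the Armijo condition, (ii) the angle condition, and (iii) the Riemannian PL condition on $\gH$, after which one unrolls the recursion and converts $\gH$ back to $\tfrac12\|\grad f\|^2$.

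First I would write the Armijo condition at step $t$, which by assumption holds for $p_{t+1} = {\rm Exp}_{p_t}(\eta_t\xi(p_t))$:
\[
  \gH(p_t) - \gH(p_{t+1}) \;\geq\; r_1 \eta_t \,\langle -\grad\gH(p_t), \xi(p_t)\rangle_{p_t}.
\]
Inserting the angle condition $\langle -\grad\gH(p_t), \xi(p_t)\rangle_{p_t} \geq c\,\|\grad\gH(p_t)\|^2_{p_t}$ gives
\[
  \gH(p_t) - \gH(p_{t+1}) \;\geq\; r_1 \eta_t c \,\|\grad\gH(p_t)\|^2_{p_t}.
\]
Then I would apply the Riemannian PL condition $\tfrac12\|\grad\gH(p_t)\|^2_{p_t} \geq \delta\,\gH(p_t)$ (with $\gH(p^*)=0$) to obtain $\gH(p_{t+1}) \leq (1 - 2 r_1 \eta_t c \delta)\,\gH(p_t)$. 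Since $\gH \geq 0$ and $\eta_i \geq \tilde\eta$ for every $i \leq t$, we may bound $1-2r_1\eta_t c\delta \leq 1-2r_1\tilde\eta c\delta$, unroll over $t$ steps, and use $\|\grad f(p)\|^2_p = 2\gH(p)$ to conclude $\|\grad f(p_t)\|^2_{p_t} \leq (1-2r_1\tilde\eta c\delta)^t\,\|\grad f(p_0)\|^2_{p_0}$.

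Conceptually this is a direct transcription of the line-search steepest-descent argument, so the only delicate points are bookkeeping, not substance. I would flag two things. First, $\tilde\eta = \min_{i\leq t}\eta_i$ is $t$-dependent, so to upgrade this to a genuinely uniform linear rate one needs the backtracking line-search to return stepsizes bounded below by a positive constant; this follows from the $L$-smoothness of $\gH$ (Proposition~\ref{prop_ham_smooth}) combined with the angle condition, exactly as in \cite[Lemma~4.12]{boumal2020introduction}, but it is not required for the stated bound. Second, the existence of a uniform $c>0$ for which the CG iterates obey the angle condition is the structural assumption driving the whole argument; in practice it is enforced by periodically restarting CG along $-\grad\gH(p_t)$ (which trivially gives $c=1$ at restarts), and I would note this as the main thing one has to arrange rather than prove.
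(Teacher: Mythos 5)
Your proof is correct and follows exactly the same route as the paper's: chain the Armijo condition with the sufficient-descent (angle) condition and the Riemannian PL condition, bound $\eta_t \geq \tilde\eta$, and unroll the recursion. The two remarks you flag (the $t$-dependence of $\tilde\eta$ and the need to enforce the angle condition, e.g., via restarts) are sensible observations but are not needed for the stated bound, consistent with the paper's treatment.
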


\begin{proof}
From the Armijo condition, we have for the stepsize $\eta_t$,
\begin{align*}
    \gH(p_{t+1}) - \gH(p_t) &\leq r_1 \eta_t \langle \grad \gH(p_t), \zeta(p_t) \rangle \\
    &\leq -r_1 \eta_t c \| \grad \gH(p_t) \|^2_{p_t} \leq -2 r_1 \eta_t c \delta \gH(p_t) \leq -2r_1 \tilde{\eta} c\delta \gH(p_t),
\end{align*}
where the last inequality follows from the definition of $\tilde{\eta}$ and $\gH(p_t) \geq 0$ for all $p_t$. Applying the result recursively completes the proof.
\end{proof}

We notice that the bound only requires a descent direction and a sufficient function decrease. Hence, we suspect a tighter bound exists when analyzing specific types of conjugate gradient (with different $\beta_t$ types).

We also highlight that most, if not all, types of conjugate gradient methods satisfy the conditions in Theorem \ref{linear_convergence_RHM_CG}. See more discussions in \cite{sato2021RCGnew}. As an example, consider the \textit{Fletcher-Reeves-type CG} \cite{fletcher1964function} with $\beta_t = \frac{\| \grad \gH(p_t)\|^2_{p_t}}{\| \grad \gH(p_{t-1}) \|^2_{p_{t-1}}}$. If the stepsize $\eta_t$ is chosen to satisfy the strong Wolfe conditions (Definition \ref{def_linesearch_condition}) with $0 < r_1< r_2 < 1/2$, then from \cite[Lemma~4.1]{sato2021riemannian}, the conditions in Theorem \ref{linear_convergence_RHM_CG} are satisfied with $\langle \xi(p_t), -\grad \gH(p_t) \rangle \geq \frac{1-2r_2}{1-r_2} \| \grad \gH(p_t) \|^2$.

\subsection{RHM with trust-region (RHM-TR)}
For the Riemannian trust-region (TR) method, the update step $\xi(p_t)$ is computed by (approximately) solving the trust-region subproblem on the tangent space \cite{absil2009optimization}, i.e.,
\begin{equation}
    \xi(p_t) = \argmin_{\xi \in T_{p_t}\M: \| \xi \|_{p_t}\leq \Delta_t} \widehat{m}_{p_t}(\xi) = \gH(p_t) + \langle \grad \gH(p_t), \xi \rangle_{p_t} + \frac{1}{2} \langle H_t[\xi], \xi \rangle_{p_t}, \label{trust_region_subproblem}
\end{equation}
where $H_t: T_{p_t}\M \xrightarrow{} T_{p_t}\M$ is a self-adjoint linear operator that approximates the Hessian $\hess \gH(p_t)$. Depending on how much decrease is provided by the obtained direction, we either accept or reject the trust-region step and modify the radius $\Delta_t$.

\begin{theorem}[Convergence of RHM-TR]
Under the same settings as in Theorem \ref{theorem_convergence_RHGD} with $L = L_0L_1 + L_2^2$, consider Algorithm \ref{RHM} with $\xi(p_t)$ given by solving \eqref{trust_region_subproblem} with truncated conjugate gradient. Assume further that $\| H_t - \hess \gH(p_t) \|_{p_t} \leq L_H\| \grad \gH(p_t) \|_{p_t}$. 
Let $c = \min_{i=0,...,t} \frac{\Delta_i}{L_0L_1}$ and $\widetilde{L} = L_H L_0 L_1 + L$. Then, the iterates $p_t$ satisfy $\| \grad f(p_t) \|^2_{p_t} \leq \big(1 - \frac{1}{2}\min\{c, {1}/{\widetilde{L}} \} \rho'\delta \big)^t \| \grad f(p_0) \|^2_{p_0}$. 

Under an additional Lipschitzness condition on $\nabla^2 \widehat{\gH}_{p}$, we can show around the global minima $p^*$, there exists $\theta > 0, T > 0$ such that for all $t > T$, the convergence is superlinear with $d(p_{t+1}, p^*) \leq \theta d^2(p_t, p^*)$. 
\end{theorem}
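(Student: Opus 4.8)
The plan is to prove the linear rate from a Cauchy‑type sufficient decrease for truncated conjugate gradient together with the Riemannian PL condition, and then to obtain the local superlinear rate by showing the trust‑region step becomes an asymptotic Newton step on $\gH$. Throughout, let $L$ denote the smoothness constant of $\gH$ from Proposition~\ref{prop_ham_smooth} (equivalently the bound $\|\hess\gH(p)\|_p\le L$), and recall $\|\grad\gH(p)\|_p=\|\hess f(p)[\grad f(p)]\|_p\le L_1L_0$ by Assumption~\ref{smoothness_assumption} and Lemma~\ref{lemma_smoothness_grad}.

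First I would collect the basic estimates. The model Hessian obeys $\|H_t\|_{p_t}\le\|\hess\gH(p_t)\|_{p_t}+\|H_t-\hess\gH(p_t)\|_{p_t}\le L+L_HL_0L_1=\widetilde L$ by the assumed proximity of $H_t$ to $\hess\gH(p_t)$. Since truncated conjugate gradient applied to \eqref{trust_region_subproblem} returns a step that does at least as well as the Cauchy point,
\[
\widehat m_{p_t}(0)-\widehat m_{p_t}(\xi(p_t))\ \ge\ \tfrac12\,\|\grad\gH(p_t)\|_{p_t}\,\min\Big\{\Delta_t,\ \frac{\|\grad\gH(p_t)\|_{p_t}}{\|H_t\|_{p_t}}\Big\}.
\]
Using $\Delta_t\ge\min_{i\le t}\Delta_i=cL_0L_1\ge c\,\|\grad\gH(p_t)\|_{p_t}$ and $\|H_t\|_{p_t}\le\widetilde L$, the right‑hand side is at least $\tfrac12\min\{c,1/\widetilde L\}\,\|\grad\gH(p_t)\|_{p_t}^2$. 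For a step accepted by the ratio test $\rho_t\ge\rho'$ (with $\rho_t$ the ratio of actual to predicted decrease), this gives $\gH(p_t)-\gH(p_{t+1})\ge\rho'\big(\widehat m_{p_t}(0)-\widehat m_{p_t}(\xi(p_t))\big)\ge\tfrac{\rho'}{2}\min\{c,1/\widetilde L\}\,\|\grad\gH(p_t)\|_{p_t}^2$; invoking the PL condition $\|\grad\gH(p_t)\|_{p_t}^2\ge2\delta\,\gH(p_t)$ then yields $\gH(p_{t+1})\le\big(1-\rho'\min\{c,1/\widetilde L\}\delta\big)\gH(p_t)$, and keeping the conservative factor $\tfrac12$ as in the statement only weakens (hence preserves) the bound. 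A rejected step leaves $p_{t+1}=p_t$, so $\gH$ is non‑increasing; by the standard trust‑region argument, once $\Delta_t$ falls below a fixed multiple of $\|\grad\gH(p_t)\|_{p_t}$ (controlled by $\widetilde L$ and by the $L$‑smoothness of $\gH$ bounding $|\gH(R_{p_t}(\xi))-\widehat m_{p_t}(\xi)|$) the step is accepted, so rejections between accepted steps are finite and the radius stays bounded away from $0$, hence the accepted‑step recursion governs the asymptotics. Iterating and using $\|\grad f(p_t)\|_{p_t}^2=2\gH(p_t)$ gives the displayed bound.

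For the superlinear phase I would first observe that the PL condition on $\gH$ comes from a uniform lower bound $|\lambda(\hess f(p))|\ge\alpha>0$ (see \eqref{eq:eigvalue_bound_Hessian} and Section~\ref{sec:problem_classes}), so $\hess f(p^*)$ is invertible; differentiating $\grad\gH(p)=\hess f(p)[\grad f(p)]$ at $p^*$, where $\grad f(p^*)=0$, yields $\hess\gH(p^*)=\big(\hess f(p^*)\big)^2\succ0$. Thus near $p^*$ the quadratic model is uniformly positive definite; combined with $\|H_t-\hess\gH(p_t)\|_{p_t}\le L_H\|\grad\gH(p_t)\|_{p_t}\to0$ and with $p_t\to p^*$ (from the linear rate), the trust‑region constraint eventually becomes inactive and truncated conjugate gradient produces an asymptotically exact Newton step on $\gH$. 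The classical local analysis of the Riemannian (trust‑region) Newton method \cite{absil2009optimization,absil2007trust,boumal2015riemannian}, under the stated Lipschitz condition on $\nabla^2\widehat{\gH}_{p}$ (the pullback $\gH\circ R_p$), then provides $\theta>0$ and $T>0$ with $d(p_{t+1},p^*)\le\theta\,d(p_t,p^*)^2$ for all $t>T$.

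The main obstacle I expect is the bookkeeping around the trust‑region radius: turning $\Delta_t\ge cL_0L_1$ into a usable estimate requires ruling out that rejected steps drive the radius to zero, which needs the $L$‑ (or retraction $L_R$‑) smoothness of $\gH$ to control the model mismatch and a careful count of consecutive rejections near $p^*$ where $\|\grad\gH(p_t)\|_{p_t}$ is small. Once this is established, the transition to a Newton step and the appeal to the classical quadratic local convergence are comparatively routine.
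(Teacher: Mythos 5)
Your proposal follows essentially the same route as the paper's proof: bound $\|H_t\|_{p_t}\le \widetilde L$ and $\|\grad\gH(p_t)\|_{p_t}\le L_0L_1$, invoke the Cauchy decrease inequality for truncated conjugate gradient, use the definition of $c$ to lower-bound $\Delta_t$ by $c\|\grad\gH(p_t)\|_{p_t}$, combine with the acceptance ratio $\rho'$ and the Riemannian PL condition to get the per-iteration contraction, and cite the classical local analysis of the Riemannian trust-region Newton method for the superlinear phase. Your additional bookkeeping on rejected steps and the radius staying bounded away from zero is a sound refinement that the paper's proof leaves implicit, but it does not change the argument.
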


\begin{proof}
First from Assumption \ref{smoothness_assumption}, $\| \grad \gH(p_t) \|_{p_t} = \| \hess f(p_t) [\grad f(p_t) ]\|_{p_t} \leq L_1 L_0$ and the operator norm of $H_t$ is bounded as 
\begin{equation*}
    \| H_t \|_{p_t} \leq \| H_t - \hess \gH(p_t) \|_{p_t}  + \| \hess \gH(p_t) \|_{p_t} \leq L_H  L_0 L_1 + L.
\end{equation*}
Also, the trust-region direction $\xi(p_t)$ returned by the truncated conjugate gradient method satisfies a so-called \textit{Cauchy decrease inequality} \cite[eq.~(7.14)]{absil2009optimization}, which gives
\begin{align*}
    \widehat{m}_{p_t}(0) - \widehat{m}_{p_t}(\xi(p_t)) &\geq \frac{1}{2} \| \grad \gH(p_t) \|_{p_t} \min \Big\{ \Delta_t, \frac{\| \grad \gH(p_t) \|_{p_t}}{\| H_t \|_{p_t}} \Big\} \\
    &\geq \frac{1}{2} \| \grad \gH(p_t) \|_{p_t} \min \Big\{ c \| \grad \gH(p_t) \|_{p_t}, \frac{\| \grad \gH(p_t) \|_{p_t}}{\| H_t \|_{p_t}} \Big\} \\
    &\geq \frac{1}{2} \min \Big\{ c, \frac{1}{L_H  L_0 L_1 + L} \Big\} \|\grad \gH(p_t) \|^2_{p_t} \\
    &\geq \frac{1}{2} \min \Big\{ c, \frac{1}{L_H  L_0 L_1 + L} \Big\}  \delta \gH(p_t).
\end{align*}
where the second inequality follows from the definition of $c$ and Assumption \ref{smoothness_assumption} where 
Furthermore, from the acceptance rule,
\begin{align*}
    \gH(p_{t+1}) - \gH(p_t) \leq \rho' \big( \widehat{m}_{p_t}(\xi(p_t)) - \widehat{m}_{p_t}(0) \big) \leq - \frac{1}{2} \min \Big\{ c, \frac{1}{L_H  L_0 L_1 + L} \Big\} \rho' \delta  \gH(p_t).
\end{align*}
Hence, the linear convergence is proved by recursively applying the result. The superlinear convergence simply follows from \cite[Theorem~7.4.11]{absil2009optimization} around any local minima.
\end{proof}

\section{On geodesic quadratic bilinear optimization}
\label{geodesic_quad_bilinear_appendix}

{
We first show an important result on the orthogonality of the min-max Riemannian gradient and Riemannian gradient of the Riemannian Hamiltonian for any g-bilinear function on arbitrary manifolds. 

\begin{proposition}
\label{orthogo_g_bilinear}
Let $f(x, y)$ be a g-bilinear function on $\M = \M_x \times \M_y$. Denote $G(p) = (\grad_x f(x,y), - \grad_y f(x,y)) \in T_p\M$ for $p = (x,y) \in \M$ as the min-max Riemannian gradient. Then for any $p \in \M$, we have $\langle G(p) , \grad \gH(p) \rangle_p = 0$ where $\gH(p) = \frac{1}{2} \| \grad f(p) \|_p^2$ is the Riemannian Hamiltonian of $f$. 
\end{proposition}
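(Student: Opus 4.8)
The plan is to combine the explicit block form of the Riemannian Hessian on a product manifold (Proposition \ref{riem_hess_prop}) with the fact that geodesic linearity kills the diagonal Hessian blocks, and then cancel the two remaining terms using the adjointness of the cross derivatives (Proposition \ref{symmetry_cross_derivative_prop}).

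First I would write $p = (x,y)$ and note $\grad f(p) = (\grad_x f(x,y), \grad_y f(x,y))$. Since $f$ is g-bilinear, it is geodesic linear in each argument separately, hence $\hess_x f(x,y) = 0$ and $\hess_y f(x,y) = 0$ (the Riemannian Hessian of a geodesic linear function vanishes, as already used in Section \ref{sec:problem_classes}). Plugging $\xi = \grad f(p)$ into the expression of Proposition \ref{gradient_hamiltonian_prop}, $\grad \gH(p) = \hess f(p)[\grad f(p)]$, together with Proposition \ref{riem_hess_prop}, then gives
\begin{equation*}
    \grad \gH(p) = \begin{pmatrix} \grad^2_{yx} f(x,y)[\grad_y f(x,y)] \\[2pt] \grad^2_{xy} f(x,y)[\grad_x f(x,y)] \end{pmatrix}.
\end{equation*}

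Next I would compute the inner product against $G(p) = (\grad_x f(x,y), -\grad_y f(x,y))$ using the product metric $\langle (u,u'),(v,v')\rangle_p = \langle u,v\rangle_x + \langle u',v'\rangle_y$:
\begin{equation*}
    \langle G(p), \grad \gH(p) \rangle_p = \langle \grad_x f(x,y), \grad^2_{yx} f(x,y)[\grad_y f(x,y)] \rangle_x - \langle \grad_y f(x,y), \grad^2_{xy} f(x,y)[\grad_x f(x,y)] \rangle_y.
\end{equation*}
By Proposition \ref{symmetry_cross_derivative_prop}, $\grad^2_{yx} f(x,y)$ is the adjoint of $\grad^2_{xy} f(x,y)$, so $\langle \grad_x f(x,y), \grad^2_{yx} f(x,y)[\grad_y f(x,y)]\rangle_x = \langle \grad^2_{xy} f(x,y)[\grad_x f(x,y)], \grad_y f(x,y)\rangle_y$, and the two terms cancel, yielding $\langle G(p), \grad \gH(p)\rangle_p = 0$.

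I do not anticipate a genuine obstacle here; the only point requiring a little care is making explicit that geodesic linearity in $x$ (resp. $y$) forces $\hess_x f = 0$ (resp. $\hess_y f = 0$), which is immediate from the definition of geodesic linearity via $\frac{d^2}{dt^2} f(\gamma(t)) = 0$ along every geodesic, and the rest is bookkeeping with the product metric and the adjointness statement already proved in Appendix \ref{app:sec:productHessian}.
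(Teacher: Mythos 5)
Your proof is correct and follows essentially the same route as the paper's: vanish the diagonal Hessian blocks via g-bilinearity, write $\grad\gH(p)$ through Propositions \ref{gradient_hamiltonian_prop} and \ref{riem_hess_prop}, and cancel the two remaining terms using the adjointness of the cross derivatives from Proposition \ref{symmetry_cross_derivative_prop}. No gaps.
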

\begin{proof}
First it is aware that for any g-bilinear function, we have $\hess_x f(x,y) = \hess_y f(x,y) = 0$. Hence, from Proposition \ref{gradient_hamiltonian_prop} and \ref{riem_hess_prop}, we show
\begin{equation*}
    \grad \gH(p) = \hess f(p) [\grad f(p)] = 
    \begin{pmatrix}
    \grad_{yx}^2 f(x,y)[\grad_y f(x,y)] \\
    \grad_{xy}^2 f(x,y)[\grad_x f(x,y)]
    \end{pmatrix}.
\end{equation*}
Finally, we have
\begin{align*}
    \langle G(p), \grad \gH(p) \rangle_p &= \langle \grad_x f(x,y), \grad_{yx}^2 f(x,y)[\grad_y f(x,y)] \rangle_x \\
    &\quad + \langle - \grad_y f(x,y)) , \grad_{xy}^2 f(x,y)[\grad_x f(x,y)] \rangle_y \\
    &= \langle \grad_y f(x,y), \grad_{xy}^2 f(x,y)[\grad_x f(x,y)] \rangle_y \\
    &\quad + \langle - \grad_y f(x,y)) , \grad_{xy}^2 f(x,y)[\grad_x f(x,y)] \rangle_y = 0
\end{align*}
where we apply Proposition \ref{symmetry_cross_derivative_prop}. 
\end{proof}
}

\begin{proof}[Proof of Proposition \ref{g_convex_concave_prop}]
First, the expression of geodesic curve connecting any $\bX_0, \bX_1 \in \M_{\rm SPD}$ is given by $\gamma(t) = \bX_0^{1/2} (\bX_0^{-1/2} \bX_1 \bX_0^{-1/2})^t \bX_0^{1/2}$. From \cite[Proposition 5.7]{vishnoi2018geodesic}, we see $\log\det(\bX)$ is geodesic linear. That is, for the geodesic $\gamma(t)$ joining $\bX_0, \bX_1$ with $\gamma(0) = \bX_0, \gamma(1) = \bX_1$, it can be shown that $\log\det(\gamma(t)) = (1-t) \log\det(\bX_0) + t \log\det(\bX_1)$. It remains to show $(\log\det(\bX))^2$ is geodesic convex, which is equivalent to show $\frac{d^2 (\log\det(\gamma(t)))^2}{dt^2} \geq 0$ for all $t \in [0,1]$ (second order characterization of geodesic convexity \cite{vishnoi2018geodesic}). Specifically, we show 
\begin{align}\label{eq:double_derivative_logdet}
    \frac{d^2 (\log\det(\gamma(t)))^2}{dt^2} = 2 (\log\det(\bX_1) - \log\det(\bX_0))^2 \geq 0.
\end{align}
The equality in (\ref{eq:double_derivative_logdet})
holds when $\bX_0 \neq \bX_1$ while $\det(\bX_0) = \det(\bX_1)$ and hence $\frac{d^2 (\log\det(\gamma(t)))^2}{dt^2} > 0$ is not always satisfied. Similar arguments hold for g-concavity with respect to $\bY$. 
\end{proof}

\begin{proof}[Proof of Proposition \ref{PL_logdet_bilinear_quadratic}]
The Riemannian gradient of $f$ is derived as 
\begin{align*}
    \grad_\bX f(\bX, \bY) &= \big( c_l \log\det(\bY) + 2 c_q \log\det(\bX) \big) \bX\\
    \grad_\bY f(\bX, \bY) &= \big( c_l \log\det(\bX) - 2 c_q \log\det(\bY) \big) \bY. 
\end{align*}
Under the affine-invariant metric, the Hamiltonian is given by 
$$\gH(\bX, \bY) = \frac{(4c_q^2 + c_l^2)d}{2} \Big( (\log\det(\bX))^2 + (\log\det(\bY))^2 \Big).$$
The gradient of Hamiltonian is given by $\grad_\bX \gH(\bX, \bY) = (4c_q^2 + c_l^2)d \allowbreak\log\det(\bX) \bX$ and $\grad_\bY \gH(\bX, \bY) \allowbreak = (4c_q^2 + c_l^2) d \allowbreak \log\det(\bY) \bY$. Next, we verify
\begin{align*}
    &\frac{1}{2}\Big( \| \grad_\bX \gH(\bX, \bY) \|^2_\bX + \| \grad_\bY \gH(\bX, \bY) \|^2_\bY \Big) \\
    &= \frac{(4c_q^2 + c_l^2)^2d^3}{2} \Big( (\log\det(\bX))^2 +  (\log\det(\bY))^2\Big) \\
    &= (4c_q^2 + c_l^2)d^2 \gH(\bX, \bY).
\end{align*}
In addition, from the definition of global saddle point in \eqref{global_saddle_pt}, the pair $(\bX^*, \bY^*)$ where $\det(\bX^*) = \det(\bY^*) = 1$, satisfies $f(\bX^*, \bY^*) = 0$. Thus, we have 
$$f(\bX^*, \bY) = -c_q(\log\det(\bY))^2  \leq f(\bX^*, \bY^*) \leq c_q (\log\det(\bX))^2  = f(\bX, \bY^*)$$
for all $\bX, \bY \in \sS_{++}^d$. Hence, the proof is complete. 
\end{proof}

{

Finally, we show that the geodesic-bilinear problem does not satisfy the min-max Riemannian PL condition on the function $f$. To this end, we first need to define the Riemannian min-max PL condition below.

\begin{definition}[Riemannian min-max PL condition]
\label{RPL_minmax}
For a min-max problem $\min_{x\in \M_x} \max_{y \in \M_y} f(x,y)$, the objective satisfies the Riemannian min-max PL condition if for a global saddle point $(x^*, y^*)$, there exists a constant $\delta > 0$ such that
\begin{align*}
    \frac{1}{2} \| \grad_x f(x', y) \|_{x'}^2 &\geq \delta \big( f(x', y) - f(x^*, y) \big), \quad \forall y \in \M, \\
    \frac{1}{2} \| \grad_y f(x, y') \|_{y'}^2 &\geq \delta \big( f(x, y^*) - f(x, y') \big), \quad \forall x \in \M.
\end{align*}
\end{definition}

Definition \ref{RPL_minmax} is equivalent to stating that the objective $f(x,y)$ satisfies the Riemannian PL in $x$ and $-f(x,y)$ satisfies the Riemannian PL in $y$. Such definition is natural as it includes geodesic strongly convex strongly concave functions as special cases.

\begin{lemma}
\label{lemma_RPL}
The g-bilinear function $f(\bX, \bY) = \log\det(\bX) \log\det(\bY)$ does not satisfy Definition \ref{RPL_minmax}.
\end{lemma}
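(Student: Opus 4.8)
The plan is to exploit a structural mismatch in the first inequality of Definition \ref{RPL_minmax}: its left-hand side will turn out to depend only on $\bY$ and the dimension $d$, while its right-hand side grows without bound as $\log\det(\bX')$ does, so no fixed $\delta > 0$ can make the inequality hold uniformly. The second inequality is handled by the completely symmetric argument, so I would only spell out the $x$-side.

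First I would compute the relevant Riemannian gradient. Under the affine-invariant metric, $\grad \log\det(\bX) = \bX$ (see Appendix \ref{app:sec:matrix_manifolds}), hence $\grad_\bX f(\bX',\bY) = \log\det(\bY)\,\bX'$, and therefore $\| \grad_\bX f(\bX',\bY) \|_{\bX'}^2 = (\log\det(\bY))^2\,\trace((\bX')^{-1}\bX'(\bX')^{-1}\bX') = d\,(\log\det(\bY))^2$, which is independent of $\bX'$. Next I would fix a global saddle point $(\bX^*,\bY^*)$; by Proposition \ref{PL_logdet_bilinear_quadratic} (with $c_q = 0$ and $c_l = 1$) it satisfies $\det(\bX^*) = 1$, so $\log\det(\bX^*) = 0$ and thus $f(\bX^*,\bY) = 0$ for every $\bY \in \sS_{++}^d$. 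Consequently $f(\bX',\bY) - f(\bX^*,\bY) = \log\det(\bX')\log\det(\bY)$.

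Then I would conclude by contradiction. Suppose the first inequality of Definition \ref{RPL_minmax} held with some $\delta > 0$. Choosing $\bY$ with $\log\det(\bY) = 1$ (for instance $\bY = e^{1/d}\bI$), the inequality reduces to $\frac{d}{2} \geq \delta\,\log\det(\bX')$ for all $\bX' \in \sS_{++}^d$. But taking $\bX' = t\bI$ with $t \to \infty$ makes $\log\det(\bX') = d\log t \to \infty$, while the left-hand side stays equal to $\frac{d}{2}$ — a contradiction. Hence no $\delta>0$ works and the Riemannian min-max PL condition fails.

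I do not expect a substantive obstacle. The only steps needing a little care are the invocation of the characterization of the global saddle points (so that $f(\bX^*,\bY) = 0$), and the observation that the left-hand gradient norm is genuinely $\bX'$-free, since that is precisely what defeats every candidate $\delta$; both are routine given the results already established.
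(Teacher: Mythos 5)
Your proposal is correct and follows essentially the same route as the paper's proof: compute $\grad_\bX f(\bX',\bY)=\log\det(\bY)\,\bX'$, use $\det(\bX^*)=1$ to get $f(\bX^*,\bY)=0$, and observe that the left-hand side of the PL inequality is independent of $\bX'$ while the right-hand side $\delta\log\det(\bX')\log\det(\bY)$ is unbounded. Your version is in fact slightly more explicit (concrete witnesses $\bY=e^{1/d}\bI$, $\bX'=t\bI$) and carries the correct factor of $d$ in $\|\grad_\bX f(\bX',\bY)\|_{\bX'}^2 = d(\log\det(\bY))^2$, which the paper drops; neither point affects the conclusion.
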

\begin{proof}
We show  the case for $\bX$. A similar statement also holds for $\bY$. As the global saddle point $(\bX^*, \bY^*)$ satisfies $\det(\bX^*) = \det(\bY^*) = 1$, we have $f(\bX^*, \bY) = 0$. In addition, the Riemannian gradient is $\grad_\bX f(\bX', \bY) = \bX' \log\det(\bY)$ with $\| \grad_\bX f(\bX', \bY) \|_{\bX'}^2 = (\log\det(\bY))^2$. On the other hand, the right-hand-side in Definition \ref{RPL_minmax} is $f(\bX', \bY) -  f(\bX^*, \bY) = \log\det(\bX') \log\det(\bY)$. It is clear that $\frac{1}{2}(\log\det(\bY))^2$ is not necessarily larger than $\delta \log\det(\bX') \log\det(\bY)$ for $\delta > 0$ and for all $\bY \in \sS_{++}^d$. Hence, the claim follows.
\end{proof}
}

\section{Additional experiment results}\label{app:sec:experiments}

\subsection{Optimality gap for geodesic quadratic bilinear optimization}
We include additional convergence results in Fig. \ref{geodesic_quad_linear_disttoopt} on the optimality gap for the geodesic quadratic bilinear optimization problem in Section \ref{geodesic_quadratic_sect}.

\begin{figure*}[!t]
    \centering
    \subfloat[\texttt{$c_q = 0, c_l = 1$} ]{\includegraphics[scale=0.29]{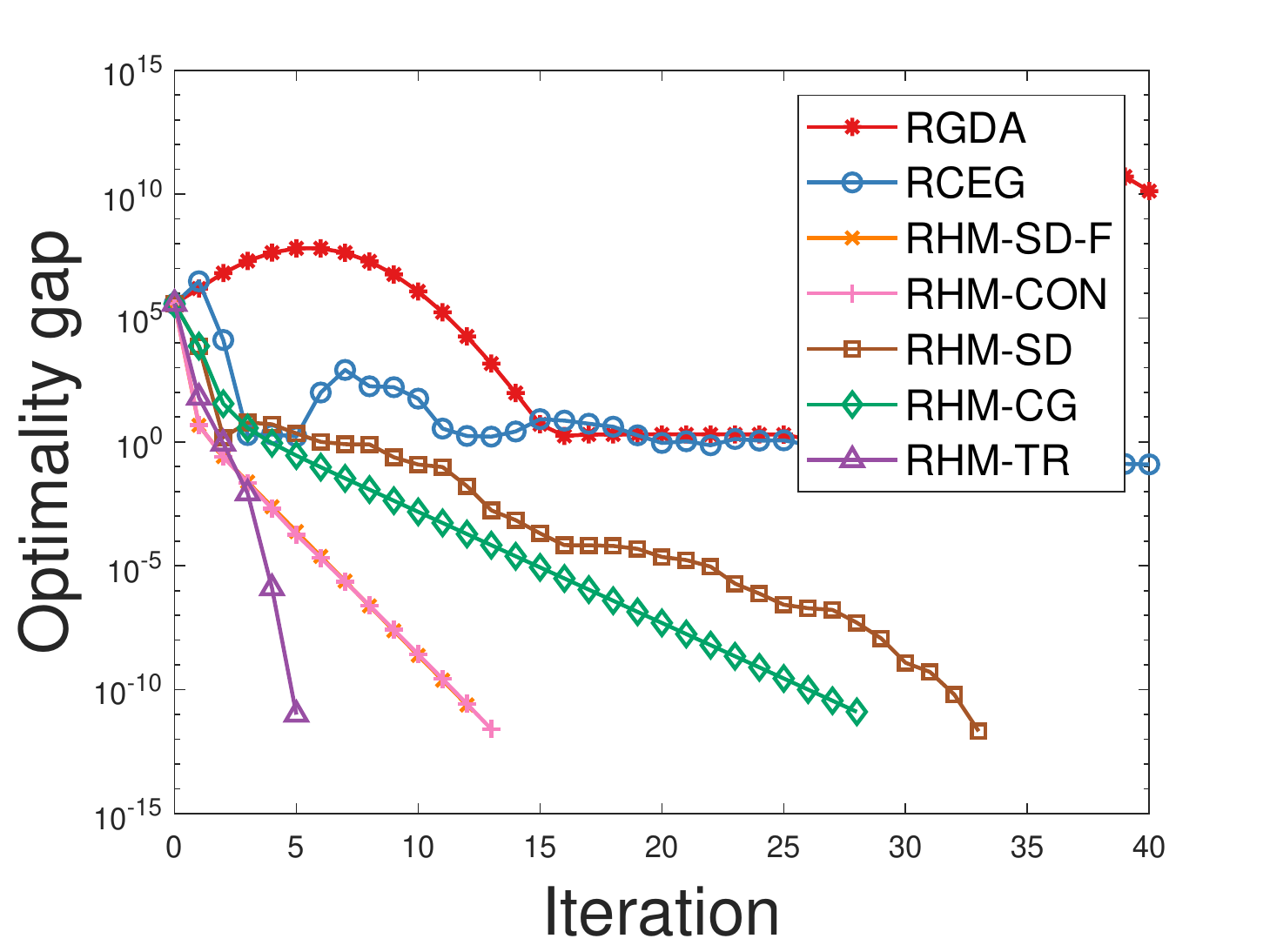}}
    \subfloat[\texttt{$c_q = 1, c_l = 0$} ]{\includegraphics[scale=0.29]{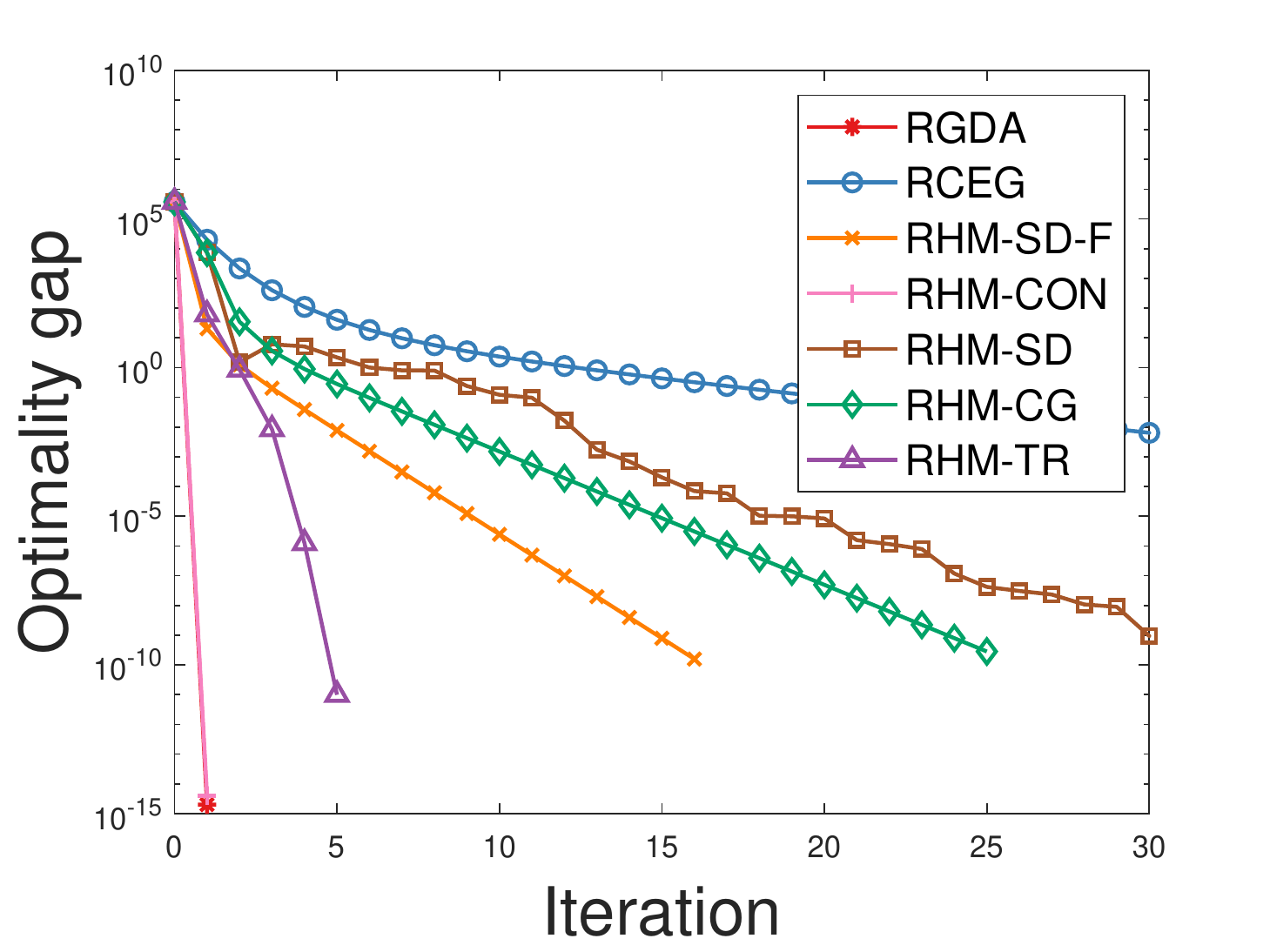}}
    \subfloat[\texttt{$c_q = 1, c_l = 0.1$} ]{\includegraphics[scale=0.29]{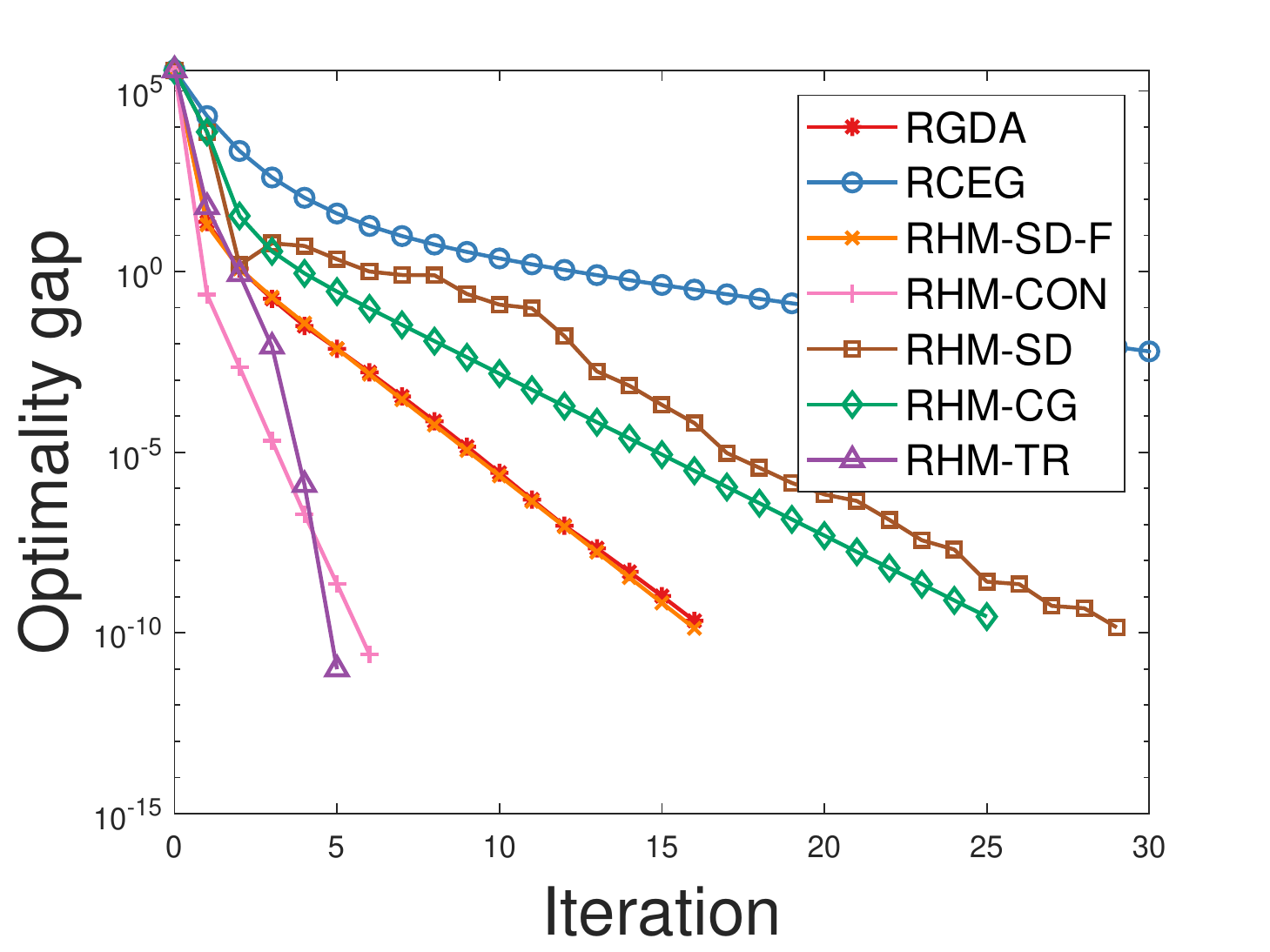}}\\
    \subfloat[\texttt{$c_q = 1, c_l = 1$} ]{\includegraphics[scale=0.29]{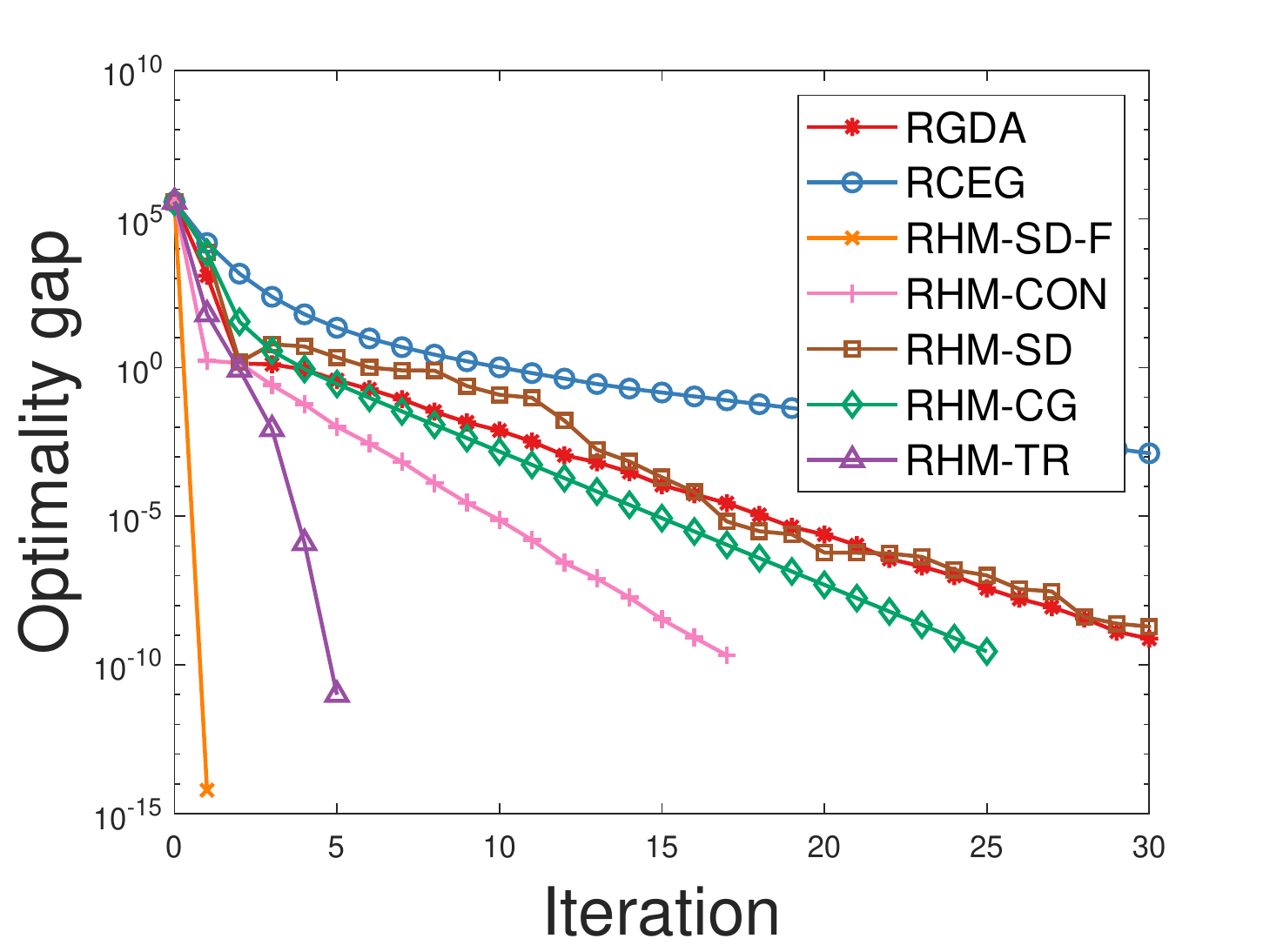}}
    \subfloat[\texttt{$c_q = 1, c_l = 10$} ]{\includegraphics[scale=0.29]{Figures/spd_quadratic_disttoopt_10.pdf}}
    \caption{Experiments comparing optimality gap on the geodesic quadratic blinear problem \eqref{g_bilinear_quadratic} with $d = 30$, under different weights $c_q, c_l$. We observe that the RHM algorithms show a good rate of convergence in all the settings. In particular, RHM-SD-F and RHM-CON significantly outperforms RCEG in all the settings indicating better theoretical rates.}
    \label{geodesic_quad_linear_disttoopt}
\end{figure*}

\subsection{Results of RHM-SGD for orthonormal GAN}
We show the sample collapse of RHM-SGD in Fig. \ref{rhm_puresgd_gan}.

\begin{figure*}[!t]
    \centering
    \subfloat{\includegraphics[scale=0.29]{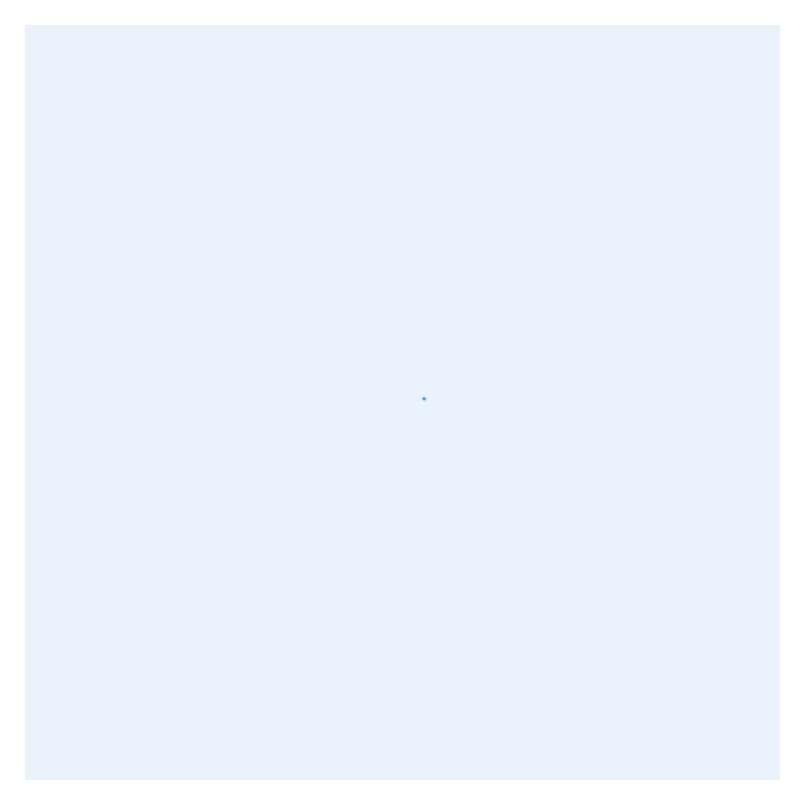}}
    \subfloat{\includegraphics[scale=0.29]{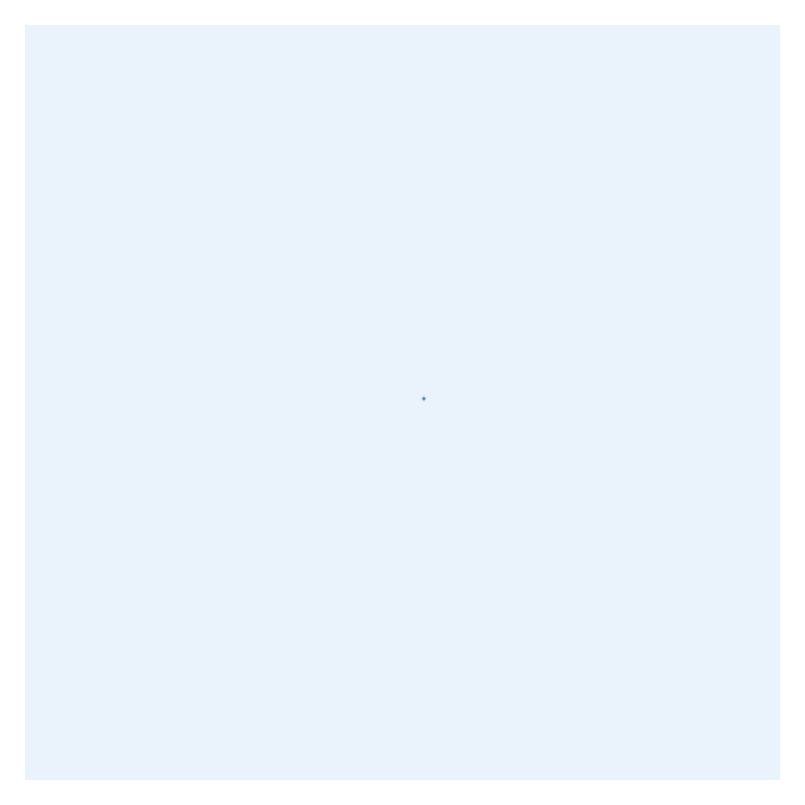}}
    \subfloat{\includegraphics[scale=0.29]{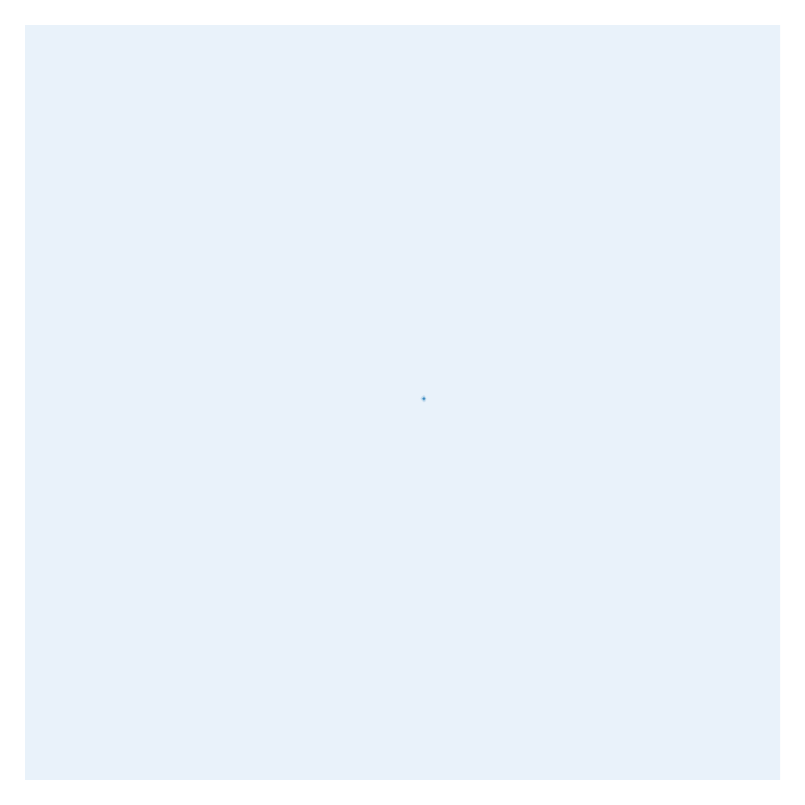}}
    \caption{Generated samples from \texttt{RHM-SGD} at $1, 2, 3 \times 10^4$ iterations from left to right. We see although RHM-SGD converges in Hamiltonian, the generated samples collapse to a single point (zoom the figures to see the single point).}
    \label{rhm_puresgd_gan}
\end{figure*}

\subsection{Trace-logarithm bilinear optimization}
We consider the `bilinear' example of \cite{zhang2022minimax} on the symmetric positive definite (SPD) manifold (endowed with the affine-invariant metric), i.e., $$f(\bX, \bY) = \trace({\rm Log}_{\bX}(\bX_0) {\rm Log}_{\bY}(\bY_0))$$ for $\bX_0, \bY_0 \in \sS_{++}^d$, where ${\rm Log}_\bM(\bM') = \{ \bM \log(\bM^{-1} \bM') \}_{\rm S}$ is the logarithm map on the SPD manifold with $\log(\cdot)$ representing the matrix principal logarithm. When the manifold is simply the Euclidean space, the logarithm map reduces to ${\rm Log}_{\bM}(\bM') = \bM'- \bM$. Hence, this resembles a bilinear problem on the manifold. 

For the experiment setting, we consider $\gamma = 0.2$ for RHM-CON and $\bX_0 = \bY_0 = \bI$. 
The convergence results are shown in Fig. \ref{tracelog_plot}, where we notice that both RGDA and RCEG oscillate while all the RHM algorithms are convergent. RHM-CON and RHM-SD-F converge rapidly initially but subsequently have a slow rate of convergence due to the hardness of the problem. RHM-CG, on the other hand, has a faster rate of convergence.

\begin{figure*}[!t]
\centering
{\includegraphics[scale=0.28]{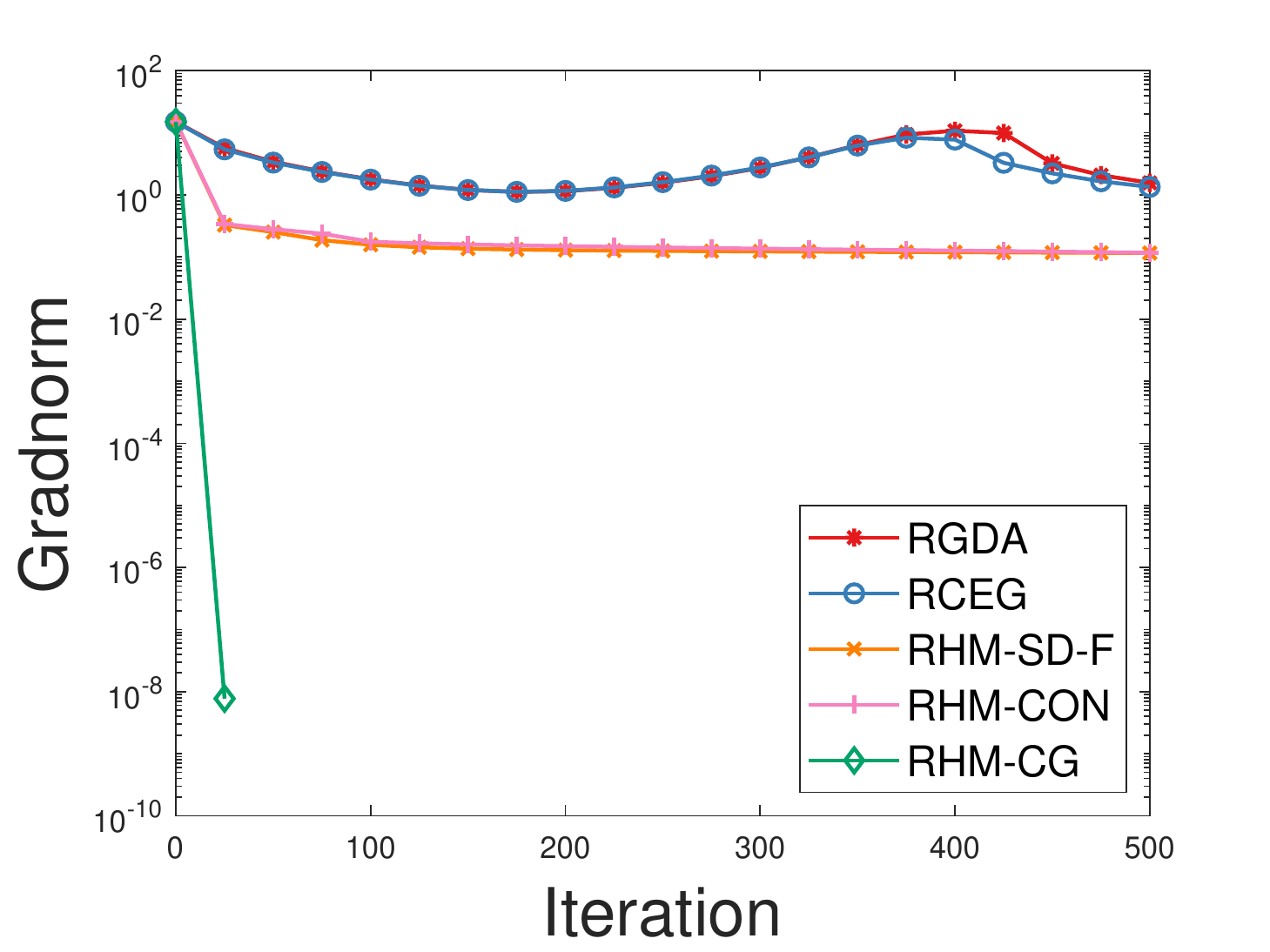}}
\caption{Trace-logarithm bilinear problem on the SPD manifold. RGDA and RCEG diverge while RHM algorithms are convergent (though RHM with steepest descent has a slower rate of convergence).}
\label{tracelog_plot}
\end{figure*}

\section*{Acknowledgments}
Pawan Kumar acknowledges the support of Microsoft Academic Partnership Grant (MAPG) 2021.

\bibliographystyle{siamplain}
\bibliography{references_siopt_final}

\end{document}